\newcommand\COMP{\hbox{C\kern -.58em {\raise .54ex \hbox{$\scriptscriptstyle |$}}
\kern-.55em {\raise .53ex \hbox{$\scriptscriptstyle |$}} }}
\newcommand\NN{\hbox{I\kern-.2em\hbox{N}}}
\newcommand\RR{\hbox{I\kern-.2em\hbox{R}}}
\newcommand\sRR{{\it \hbox{I\kern-.2em\hbox{R}}}}
\newcommand\QQ{\hbox{I\kern-.53em\hbox{Q}}}
\newcommand\PP{\hbox{I\kern-.53em\hbox{P}}}
\newcommand\EE{\hbox{I\kern-.53em\hbox{E}}}
\newcommand\ZZ{{{\rm Z}\kern-.28em{\rm Z}}}
\newcommand\be{\begin{equation}}
\newcommand\ee{\end{equation}}
\newcommand{\norm}[1]{\left\lVert#1\right\rVert}
\DeclareMathOperator*{\essinf}{ess\,inf}
\DeclareMathOperator*{\esssup}{ess\,sup}
\newtheorem{theorem}{Theorem}[section]
\newtheorem{proposition}[theorem]{Proposition}
\newtheorem{remark}[theorem]{Remark}
\newtheorem{example}[theorem]{Example}
\newtheorem{lemma}[theorem]{Lemma}
\newtheorem{definition}[theorem]{Definition}
\newcommand*\bigcdot{\mathpalette\bigcdot@{.5}}
\newcommand*\bigcdot@[2]{\mathbin{\vcenter{\hbox{\scalebox{#2}{$\m@th#1\bullet$}}}}}
\newcommand{\is}{\bigcdot }
\def \Lbrack {[\![}
\def \Rbrack {]\!]}
\numberwithin{equation}{section}
\begin{document}
\title{Linear reflected backward stochastic differential equations arising from vulnerable claims in markets with random horizon}
\author[1]{Tahir Choulli\thanks{tchoulli@ualberta.ca}}
\author[2]{Safa' Alsheyab\thanks{smalsheyab6@just.edu.jo}}
\affil[1]{Mathematical and Statistical Sciences, University of  Alberta, Edmonton, AB, Canada}
\affil[2]{Department of Mathematics and Statistics, Jordan University of Science and Technology, Jordan, }


\renewcommand\Authands{ and }





\maketitle
{\Large\bf{This paper is the second main part of the paper: Optimal stopping problem under random horizon: Mathematical structures and linear RBSDEs,  
available at https://arxiv.org/abs/2301.09836v1.}}

\abstract{ This paper considers the setting governed by $(\mathbb{F},\tau)$, where $\mathbb{F}$  is the ``public" flow of information, and $\tau$ is a random time which might not be $\mathbb{F}$-observable. This framework covers credit risk theory and life insurance. In this setting, we assume $\mathbb{F}$ being generated by a Brownian motion  $W$ and consider a vulnerable claim $\xi$, whose payment's policy depends {\it{essentially}} on the occurrence of $\tau$.  The hedging  problems, in many directions, for this claim led to the question of studying the linear reflected-backward-stochastic differential equations (RBSDE hereafter),  
\begin{equation*}
\begin{split}
&dY_t=f(t)d(t\wedge\tau)+Z_tdW_{t\wedge{\tau}}+dM_t-dK_t,\quad Y_{\tau}=\xi,\\
& Y\geq S\quad\mbox{on}\quad \Lbrack0,\tau\Lbrack,\quad \displaystyle\int_0^{\tau}(Y_{s-}-S_{s-})dK_s=0\quad P\mbox{-a.s.}.\end{split}
\end{equation*}
This is the objective of this paper. For this RBSDE and without any further assumption on $\tau$ that might neglect any risk intrinsic to its stochasticity, we answer the following: a) What are the sufficient minimal conditions on  the data $(f, \xi, S, \tau)$ that guarantee the existence of the solution to this RBSDE? b) How can we estimate the solution in norm using $(f, \xi, S)$? c) Is there an $\mathbb F$-RBSDE that is intimately related to the current one and how their solutions are related to each other? This latter question has practical and theoretical leitmotivs. \\\\
{\bf{Keywords:}}{ random horizon, Linear RBSDEs, progressive enlargement, prior estimates, hedging vulnerable claims}

}

\maketitle

\section{Introduction}\label{sec1}
Our framework in this paper falls into the setting of informational models, where there are two flows of information. The ``public" flow, which will be denoted by $\mathbb{F}:=({\cal{F}}_t)_{t\geq 0}$ and is available to all agents, and a larger flow $\mathbb{G}:=({\cal{G}}_t)_{t\geq0}$ containing additional information about the occurrence of arbitrary random time $\tau$. As $\tau$ cannot be observed before it occurs, then  $\mathbb{G}$ is the progressive enlargement of $\mathbb{F}$ with $\tau$, and makes $\tau$ a $\mathbb{G}$-stopping time. This setting covers many domains in finance and insurance. In fact, $\tau$ might represent the default of a firm or a client in credit risk theory, the death time of an insured in life insurance,  the job's termination time of an {\it employee-stock-option}'s holder (called ESO hereafter) in finance, ..., etcetera. Hence, our current setting covers these three aforementioned frameworks, and for more about these we refer the reader to \cite{BelangerShreveWong,BieleckiRutkowski,Szimayer,CarrLinetsky} and the references therein to cite a few. \\

By addressing the exponential hedging of vulnerable claims in \cite{Choulli5}, see \cite{Alsheyab} for a very early version, and the Esscher pricing for vulnerable claims in \cite{Choulli6}, we arrived at a crucial and important question that points to the study of the following RBSDE
\begin{align}\label{RBSDE(General)}
\begin{cases}
dY_{t}=-f(t, Y_t,Z_t)d(t\wedge\tau)-d(K_{t}+M_{t})+Z_{t}dW_{t\wedge\tau},\quad {Y}_{\tau}=\xi,\\
 Y\geq S\quad\mbox{on}\quad\Lbrack 0,\tau\Lbrack,\quad\mbox{and}\quad E\left[\displaystyle\int_{0}^{\tau}(Y_{t-}-S_{t-})dK_{t}\right]=0.
\end{cases}
\end{align}
Here $W$ is a Brownian motion, and  $\mathbb F$ is assume to be generated by $W$, for simplicity of exposition only. The triplet $(f,S,\xi)$ is the data of the model in which $\xi$ is the vulnerable claim, $Y$is the value process that we are looking to describe, and $S$ is the lower barrier for $Y$ which is a RCLL and $\mathbb F$-adapted process with values in $[-\infty, +\infty)$, and importantly the driver $f(t,x,y)$ is measurable and Lipchitz in $(y,z)$. We illustrate our claim, that (\ref{RBSDE(General)}) with nice driver $f$ is the unified form of BSDEs arising from vulnerable claims when $\tau$ is left to be arbitrary general, by a simple example.
\begin{example} Consider a simple complete market model, which consists of a non-risky asset $S_0(t)=\exp(\int_0^tr_s ds)$, where $r$ is nonnegative and bounded, and one risky asset $S_1$ following the geometric Brownian motion, such that the market price of risk $\lambda$ is a bounded process.  Let $T$ be a finite fixed investment horizon, and $g$ be a nonnegative and bounded $\mathcal F_T$-measurable random variable. Then the vulnerable claim has its payoff $B:=gI_{\{\tau>T\}}$. This is a vulnerable option without recovery, which guarantees the payment of $g$ at time $T$ if the insured survives at this time and nothing otherwise. For the exponential hedging problem of this claim, we deal with the optimization problems:
\begin{equation}\label{PrimalDualProblems}
\underbrace{\sup_{\theta\in\Theta(S^{\tau},\mathbb{G})}E\left[1-\exp(-\gamma(X^{\theta}_{T\wedge\tau}-B))\right]}_{=:1-\widetilde{\cal{P}}_0}=1-\exp\biggl(-\underbrace{\inf_{Z\in{\cal{D}}(S^{\tau},\mathbb{G})}\Biggl\{E[Z_T\ln(Z_T)]-\gamma{E}[Z_TB]\Biggr\}}_{=:1-\exp(-\widetilde{J}_0)}\biggr).
\end{equation}
Here, $\gamma>0$, $\Theta(S^{\tau},\mathbb{G})$ is the set of all ``admissible" strategies, and ${\cal{D}}(S^{\tau},\mathbb{G})$ is the dual set (the set of all deflators for $(S^{\tau},\mathbb{G})$), and $X^{\theta}$ is the wealth process associated to the strategy $\theta$ with zero initial wealth.\\
\end{example}
For more details about the equality in (\ref{PrimalDualProblems}), for the general setting of semimartingales and the various form of $\Theta(S^{\tau},\mathbb{G})$, we refer the reader to the seminal paper \cite{DelbaenFiveAuthors}  and the references therein to cite a few. For our simple ``initial model" $(S,\mathbb{F})$, while $\tau$ is left to be arbitrary general, we focus on the two valuation processes $\widetilde{\cal{P}}$ and $\widetilde{J}$ given by 
\begin{equation*}
\begin{split}
&\widetilde{\cal{P}}_t:=\essinf_{\theta\in\Theta(S^{\tau},\mathbb{G})}E\left[\exp\left(-\gamma(X^{\theta}_{T\wedge\tau}-X^{\theta}_{t\wedge\tau}-B)\right)\Big|\mathcal{G}_{t}\right],\\
&\widetilde{J}_t:=\esssup_{Z\in{\cal{D}}(S^{\tau},\mathbb{G})}E\left[-{{Z_T}\over{Z_t}}\ln\left(\frac{Z_T}{Z_t}\right)+{\gamma}{{Z_T}\over{Z_t}}B\Big|\mathcal{G}_{t}\right].
\end{split}
\end{equation*}
Indeed, we prove in \cite{Choulli5} -- see \cite{Alsheyab} for an early version of a part of this work--, that there exists a right-continuous-with-left-limits (RCLL hereafter for short) and $\mathbb{F}$-adapted process $\widehat{S}$, and a nicely integrable ${\cal{G}}_T$-measurable random variable $\widehat{B}$ such that $\widetilde{J}$ is the solution to the following RBSDE
\begin{equation*}
\begin{split}
dY_{t}=-({1\over{2}}\lambda^2_t+\lambda_t Z_t)d(t\wedge\tau)-d(K_{t}+M_{t})+Z_{t}dW_{t\wedge\tau},\quad {Y}_{\tau}=-\gamma(B-\widehat{B}),\\
 Y\geq \widehat{S}\quad\mbox{on}\quad\Lbrack 0,\tau\Lbrack,\quad\mbox{and}\quad E\left[\displaystyle\int_{0}^{\tau}(Y_{t-}-\widehat{S}_{t-})dK_{t}\right]=0.
\end{split}
\end{equation*}
By working under the risk neural measure of the initial model $(S,\mathbb{F})$ instead of $P$, Girsanov's theorem allows us to conclude that the above RBSDE is ``equivalent" to the following linear RBSDE
\begin{equation}\label{RBSDE(MainLinear)}
\begin{split}
&dY_{t}=-f(t)d(t\wedge\tau)-d(K_{t}+M_{t})+Z_{t}dW_{t\wedge\tau},\quad {Y}_{\tau}=\xi,\\
& Y\geq S\quad\mbox{on}\quad\Lbrack 0,\tau\Lbrack,\quad\mbox{and}\quad E\left[\displaystyle\int_{0}^{\tau}(Y_{t-}-S_{t-})dK_{t}\right]=0.
\end{split}
\end{equation}
Thus, this resulting linear RBSDE is directly used -- and it is the simplest one -- in addressing the exponential hedging problem for vulnerable claims, as the RBSDE for $\widetilde{\cal{P}}$ (the valuation primal process) takes the more general form (\ref{RBSDE(General)}). Besides these facts, it is well known nowadays in the literature of BSDEs that addressing (\ref{RBSDE(MainLinear)}) is the crucial step into solving the general case (\ref{RBSDE(General)}).  \\

{\it What is novel in (\ref{RBSDE(General)}) and (\ref{RBSDE(MainLinear)}) and what are the challenges?} To answer these questions and in virtue of our main concern of random horizon, we classify the literature on BSDEs into three major groups. The first main group, which represents the huge majority of the literature on BSDEs, treats the case when $\tau=T\in(0,\infty]$ is a fixed (finite or infinite) horizon. For this case with its extensive and various generalizations in many aspects, we refer the reader to \cite{Essaky2,Pardoux4,Bouchard,Klimsiak1,Klimsiak2,Pardoux,SonerRTouziZhang, ElqarouiBSDE,Hamadane0, Hamadane1,Hamadane2,Touzi,Elotmani} and the references therein to cite a few.\\
The second group extends the first group to the setting where $\tau$ is a finite $\mathbb{F}$-stopping time. This extension was mainly motivated by the viscosity solutions to semilinear elliptic PDE,  and we refer the reader to \cite{Peng91,PardouxPradeillesRao,BriandConfortola,Darling,Touzi,Popier,Ouknine}  and the references therein to cite a few. The third group, which has been essentially motivated by the hedging and pricing problems in credit risk, consists of allowing $\tau$ to be a finite random time that might not be an $\mathbb{F}$-stopping time. However, up to our knowledge, all the literature about this case impose conditions on $\tau$, and the most frequent of these assumptions is  the immersion assumption (also known as the H-hypothesis). This immersion assumption {\it{forces}} $(W_{t\wedge\tau})_{t\geq0}$ {\it{to remain a martingale}} under $\mathbb{G}$, see \cite{Grigorova,Ankirchner,Biagini,Kharroubi, SekineTanaka,Dong} and the references therein to cite a few.  The immersion, in particular, and other assumptions on $\tau$ boil down in a way or another into neglecting some of the risks induced by the stochasticity of $\tau$. Thus, herein this paper, we let $\tau$ be as general as the informational risks generated by $\tau$ are not affected. This implies that $W^{\tau}$ is no longer a local martingale, and hence the existing machinery of BSDEs fails to be applicable directly to  (\ref{RBSDE(MainLinear)}) as the Doob and Burkholder-Davis-Gunndy's inequalities fail for $\mathbb{F}$-local martingale stopped at $\tau$. This is one of the main novelties and challenges in our current setting. A natural and intuitive remedy to this challenge aforementioned, can be the substitution of $W^{\tau}- \beta\is(t\wedge\tau)$, which is a $\mathbb{G}$-local martingale, to $W^{\tau}$, and this leads us to an equivalent RBSDE with the driver $\overline{f}(t,Y_t,Z_t):=f(t,Y_t,Z_t)+\beta_t Z_t$. This approach definitely will impose condition(s) (the weakest are some sort of integrability) on $\beta$, or equivalently condition(s) on $\tau$, which  might affect the informational risks of $\tau$ somehow. Furthermore, these latter conditions on $\tau$ trigger the question whether such $\tau$ exists after all. This explains how the classical/existing approaches to overcome the challenges fail, and we need a novel way of thinking specific to this case of informational setting.  \\

{\it What are our achievements?} We assume that the survival probabilities, i.e. $P(\tau>t\big|{\cal{F}}_t)>0$ for all $t\geq 0$. This is the weakest assumption that can be tolerated, as it does not affect the various risks intrinsic to $\tau$, see \cite{ChoulliYansori2, ChoulliDavelooseVanmaeleMortality, ChoulliDavelooseVanmaele} about these risks. Then we address fully (\ref{RBSDE(MainLinear)})  by distinguishing the two cases depending whether the random horizon is bounded or not. For the case of bounded horizon, i.e. the case of $T\wedge\tau$ where $T\in(0,\infty)$ fixed investment horizon, we single out the adequate change of probability, $\widetilde{Q}_T$, corresponding to this informational setting without any further assumption on $\tau$. This probability $\widetilde{Q}_T$ appeared naturally, in \cite{Choulli1}, when addressing arbitrage theory and construction of deflators for models stopped at $\tau$. Using this change of probability, we give priori estimates for the $L^p$-solution, for any $p\in (1,\infty)$, where the constants are universal and does not depend on the horizon in contrast to the majority of the literature. We single out the $\mathbb{F}$-REBSDE counterpart to (\ref{RBSDE(MainLinear)})  for this case of $T\wedge\tau$, and describe precisely the relationship that binds the solutions of the two RBSDEs. The case of unbounded horizon, i.e. when $T$ goes to infinity, the situation is much more complex as $\widetilde{Q}_T$ might not converge to a probability. To over come this challenge, we establish some domination inequalities between the $L^p$-norms under $P$ and $\widetilde{Q}_T$ and then take the limiting case by letting $T$ to go to infinity. This leads us to the space and its norm, that is adequate for the data-triplet $(f, S, \xi)$, and which guarantee the existence and uniqueness of the solution to (\ref{RBSDE(MainLinear)}) without any assumption on $\tau$. Furthermore, the priori estimates for the solution in the resulting space are also elaborated. Furthermore, the $\mathbb F$-RBSDE counter part to (\ref{RBSDE(MainLinear)}) is also derived and the one-to-one relationship between the solutions of the two RBSDEs is elaborated as well.  This connection is highly motivated by its importance in credit risk theory, see \cite{Biagini,SekineTanaka} and the references therein, and  life insurance where the mortality  and longevity risks are real challenges for both academia and insurance industry.\\

 This paper has four sections including the current one. The second section defines general notations, the mathematical model of the random horizon $\tau$, and its preliminaries. The third and fourth sections are devoted to the linear RBSDEs depending whether we stop the RBSDE at $\tau\wedge{T}$ for some fixed planning horizon $T\in (0,+\infty)$, or we stop at $\tau$. The paper has Appendices where we recall some crucial results and/or prove our technical lemmas.

 \section{The mathematical setting and preliminaries}
Our mathematical framework consists of  the pair $(\cal{B},\tau)$, where $\cal{B}$ is a stochastic basis and $\tau$ is random time. Precisely, ${\cal{B}}:=(\Omega, {\cal{F}},\mathbb{F}:=({\cal{F}}_t)_{t\geq 0}, P)$ is a filtered probability space satisfying the usual conditions (i.e. right continuous and complete), and $\tau$ is a nonnegative ${\cal{F}}$-measurable random variable. This random time might not be an $\mathbb{F}$-stopping time, and hence our setting falls into the setting of informational markets with two flows of information. The ``public" flow, which is available to all agents in the system, is the filtration $\mathbb{F}$, while the larger flow contains additional information about the random time $\tau$.  The rest of this section has three subsections. The first subsection defines general notation that will be used throughout the paper, while the second subsection gives definition and notation about RBSDEs.  The third subsection presents the progressive enlargement modelling associated to $\tau$ and its preliminaries.
\subsection{General notation}
By  ${\mathbb H}$ we denote an arbitrary  filtration that satisfies the usual conditions of completeness and right continuity.  For any process $X$, the $\mathbb H$-optional projection and  the $\mathbb H$-predictable projection, when they exist, will be denoted by $^{o,\mathbb H}X$  and $^{p,\mathbb H}X$ respectively. The set ${\cal M}(\mathbb H, Q)$ (respectively  ${\cal M}^{p}(\mathbb H, Q)$ for $p\in (1,+\infty)$) denotes the set of all $\mathbb H$-martingales (respectively $p$-integrable martingales) under $Q$, while ${\cal A}(\mathbb H, Q)$ denotes the set of all $\mathbb H$-optional processes that are right-continuous with left-limits (RCLL for short) with integrable variation under $Q$. When $Q=P$, we simply omit the probability for the sake of simple notation.  For an $\mathbb H$-semimartingale $X$, by $L(X,\mathbb H)$ we denote the set of $\mathbb H$-predictable processes that are $X$-integrable in the semimartingale sense.  For $\varphi\in L(X,\mathbb H)$, the resulting integral of $\varphi$ with respect to $X$ is denoted by $\varphi\is{X}$. For $\mathbb H$-local martingale $M$, we denote by $L^1_{loc}(M,\mathbb H)$ the set of $\mathbb H$-predictable processes $\varphi$ that are $X$-integrable and the resulting integral $\varphi\is M$ is an $\mathbb H$-local martingale. If ${\cal C}(\mathbb H)$ is a set of processes that are adapted to $\mathbb H$, then ${\cal C}_{loc}(\mathbb H)$ is the set of processes, $X$, for which there exists a sequence of $\mathbb H$-stopping times, $(T_n)_{n\geq 1}$, that increases to infinity and $X^{T_n}$ belongs to ${\cal C}(\mathbb H)$, for each $n\geq 1$.  The $\mathbb H$-dual optional projection and the $\mathbb H$-dual predictable projection of a process $V$ with finite variation, when they exist, will be denoted by  $V^{o,\mathbb H}$  and $V^{p,\mathbb H}$ respectively. For any real-valued $\mathbb H$-semimartingale $L$, we denote by ${\cal E}(L)$ the Dol\'eans-Dade (stochastic) exponential, which is the unique solution to $dX=X_{-}dL,\ X_0=1,$  and is given by
\begin{eqnarray}\label{DDequation}
 {\cal E}_t(L)=\exp\left(L_t-L_0-{1\over{2}}\langle L^c\rangle_t\right)\prod_{0<s\leq t}(1+\Delta L_s)e^{-\Delta L_s}.\end{eqnarray}
 Throughout the paper,  $\mathcal{J}_{\sigma_1}^{\sigma_2}(\mathbb{H})$ denotes the set of all $\mathbb{H}$-stopping times with values in $\Lbrack
\sigma_1,\sigma_2\Rbrack$, for any two $\mathbb H$-stopping times $\sigma_1$ and  $\sigma_2$  such that $\sigma_1\leq\sigma_2$ $P$-a.s..
We recall the notion of class-D-processes.
\begin{definition}Let $(X,\mathbb{H})$ be a pair of a process $X$ and a filtration $\mathbb{H}$. Then $X$ is said to be of class-$(\mathbb{H},{\cal{D}})$ if $\{X_{\sigma}:\ \sigma\ \mbox{is a finite $\mathbb{H}$-stopping time}\}$ is a uniformly integrable family of random variables.  
\end{definition}
\subsection{RBSDEs: Definition, spaces and norms}
Throughout this subsection we suppose given a complete filtered probability space $\left(\Omega, {\cal F}, \mathbb H=({\cal H}_t)_{t\geq 0}, Q\right)$, where $\mathbb H\supseteq{\mathbb F}$ and $Q$ is any probability measure absolutely continuous with respect to $P$. We start by the following definition of RBSDEs. 
\begin{definition}\label{Definition-RBSDE} Let $\sigma$ be an $\mathbb H$-stopping time, and $(f^{\mathbb H},S^{\mathbb H},\xi^{\mathbb H})$ be  a triplet such that $f^{\mathbb H}$ is $\mbox{Prog}(\mathbb H)\otimes{\cal B}(\mathbb R)\otimes{\cal B}(\mathbb R)$-measurable functional, $S^{\mathbb H}$ is a RCLL and $\mathbb H$-adapted process, and $\xi^{\mathbb H}$ is an ${\cal H}_{\sigma}$-measurable random variable. Then an $(\mathbb H, Q)$-solution to the RBSDE
\begin{equation}\label{RBSDE4definition}
\begin{split}
&dY_t=-f^{\mathbb H}(t,Y_t,Z_t)I_{\{t\leq\sigma\}}dt+Z_t dW_{t\wedge\sigma}-dM_t-dK_t,\  Y_{\sigma}=\xi^{\mathbb H},\\
 &\displaystyle{Y}\geq S^{\mathbb H}\ \mbox{on}\ \Lbrack0,\sigma\Lbrack,\quad \int_0^{\sigma}(Y_{u-}-S_{u-}^{\mathbb H})dK_u=0\quad Q\mbox{-a.s.}.\end{split}
\end{equation}
is any quadruplet $(Y^{\mathbb H}, Z^{\mathbb H},M^{\mathbb H},K^{\mathbb H})$ satisfying (\ref{RBSDE4definition}) such that $K^{\mathbb H}$ is RCLL nondecreasing and $\mathbb H$-predictable,  $M^{\mathbb H}\in {\cal M}_{0,loc}(Q,\mathbb H)$, $M^{\mathbb H}=(M^{\mathbb H})^{\sigma}$, $K^{\mathbb H}=(K^{\mathbb H})^{\sigma}$, and
\begin{eqnarray}\label{Condition1}
\int_0^{\sigma}\left( (Z_t^{\mathbb H})^2+\vert{f^{\mathbb H}}(t,Y_t^{\mathbb H},Z_t^{\mathbb H})\vert\right) dt<+\infty\quad Q\mbox{-a.s.}
\end{eqnarray}
When $Q=P$ we will simply call the quadruplet an $\mathbb H$-solution, while the filtration is also omitted when there no risk of confusion. 
\end{definition}
In this paper, we are interested in solutions that are integrable somehow. To this end, we recall the following spaces and norms that will be used throughout the paper. We denote by $\mathbb{L}^{p}(Q)$ the space of $\mathcal{F}$-measurable random variables $\xi'$, such that
\begin{equation}\parallel \xi' \parallel_{\mathbb{L}^{p}(Q)}^{p}:=E^{Q}\left[|\xi' | ^{p}\right ]<\infty .\end{equation}
$\mathbb{D}_{\sigma}(Q,p)$ is the space of  RCLL  and ${\cal F}\otimes{\cal B}(\mathbb R^+)$-measurable processes, $Y$, with $Y=Y^{\sigma}$, $\sup_{0\leq {t}\leq\sigma}\vert{Y_t}\vert=\sup_{0\leq {t}<\infty}\vert{Y_t}\vert$ on $(\sigma=\infty)$, and
\begin{equation}
\Vert{Y }\Vert_{\mathbb{D}_{\sigma}(Q,p)}^{p}:=E^{Q}\left[\sup_{0\leq {t}\leq\sigma}\vert{Y_t}\vert^p\right ]<\infty.
\end{equation}
Here ${\cal B}(\mathbb R^+)$ is the Borel $\sigma$-field of $\mathbb R^+$, and $\sigma$ is a random time. $\mathbb{S}_{\sigma}(Q,p)$ is the space of $\mbox{Prog}(\mathbb H)$-measurable processes $Z$ such that $Z=ZI_{\Lbrack0,\sigma\Rbrack}$ and 
\begin{eqnarray*}
\Vert{Z}\Vert_{\mathbb{S}_{\sigma}(Q,p)}^{p}:=E^{Q}\left[\left (\int_{0}^{\sigma}\vert{ Z_t}\vert ^{2}dt\right )^{{p}/{2}}\right ]<\infty .
\end{eqnarray*}
 For any $M\in {\cal M}_{loc}(Q,\mathbb H)$, we define its $p$-norm by 
 \begin{equation}
\Vert {M} \Vert_{{\cal M}^p(Q)}^{p}:=E^{Q}\left[[M, M]_{\infty} ^{p/2}\right ]<\infty,\end{equation}
and the $p$-norm of any $K\in {\cal A}_{loc}(Q,\mathbb H)$  at any random time $\sigma$ is given by 
\begin{eqnarray*}
 \Vert{K}\Vert_{{\cal{A}}_{\sigma}(Q,p)}^p:=E^Q\left[\left(\mbox{Var}_{\sigma}(K)\right)^p\right].\end{eqnarray*}
 Herein and throughout the paper, Var$(K)$ denotes the total variation process of $K$, and ${\cal A}^p_{\sigma}(Q,\mathbb H):=\{K\in {\cal A}_{loc}(Q,\mathbb H):
   \Vert{K}\Vert_{{\cal{A}}_{\sigma}(Q,p)}<+\infty\}$.
   \begin{definition}\label{RBSDE4Lp} Let $p\in (1,+\infty)$. An $L^p(\mathbb{H},Q)$-solution for (\ref{RBSDE4definition}) is a $(\mathbb{H},Q)$-solution  $(Y, Z,M,K)$  which belongs to the set 
$$
\mathbb{D}_{\sigma}(Q,p)\otimes \mathbb{S}_{\sigma}(Q,p)\otimes{\cal M}^p(Q,\mathbb H)\otimes{\cal A}^p_{\sigma}(Q,\mathbb H).$$\end{definition}
 Throughout the rest of the paper, we will use for the sake of simplifying notations the following two norms for any quadruplet $(Y,Z, M, K)$ belonging to $\mathbb{D}_{\sigma}(Q,p)\otimes \mathbb{S}_{\sigma}(Q,p)\otimes{\cal M}^p(Q,\mathbb H)\otimes{\cal A}^p_{\sigma}(Q,\mathbb H)$ as follows
   \begin{equation}\label{Norm4(Y,Z,K,M)}
   \begin{split}
   &\norm{\Vert(Y,Z,M,K)\Vert}_{(Q,\mathbb{H},p)}:= \Vert{Y }\Vert_{\mathbb{D}_{\sigma}(Q,p)}+  \vert\Vert(M,Z)\Vert\vert_{(Q,\mathbb{H},p)}+\Vert{K}\Vert_{{\cal{A}}_{\sigma}(Q,p)}\\
   \\
   & \vert\Vert(M,Z)\Vert\vert_{(Q,\mathbb{H},p)}:=\Vert{Z}\Vert_{\mathbb{S}_{\sigma}(Q,p)}+\Vert {M} \Vert_{{\cal M}^p(Q)}.
   \end{split}
   \end{equation}
\subsection{The random horizon and the progressive enlargement of $\mathbb F$}
In addition to this initial model $\left(\Omega, {\cal F}, \mathbb F,P\right)$, we consider an arbitrary random time, $\tau$, that might not be an $\mathbb F$-stopping time. This random time is parametrized though $\mathbb F$ by the pair $(G, \widetilde{G})$, called survival probabilities or Az\'ema supermartingales, and are given by
\begin{eqnarray}\label{GGtilde}
G_t :=\ ^{o,\mathbb F}(I_{\Lbrack0,\tau\Lbrack})_t= P(\tau > t | {\cal F}_t) \ \mbox{ and } \ \widetilde{G}_t :=\ ^{o,\mathbb F}(I_{\Lbrack0,\tau\Rbrack})_t= P(\tau \ge t | {\cal F}_t).\end{eqnarray}
Furthermore, the following process $m$ given by 
\begin{equation} \label{processm}
m := G + D^{o,\mathbb F},\quad \mbox{where}\quad{D}:=I_{\Lbrack\tau,+\infty\Lbrack},
\end{equation}
is a BMO $\mathbb F$-martingale and plays important role in our analysis. The flow of information $\mathbb G$, which incorporates both $\mathbb F$ and $\tau$, is defined as follows. 
\begin{equation}\label{processD}
\mathbb G:=({\cal G}_t)_{t\geq 0},\ {\cal G}_t:={\cal G}^0_{t+}\ \mbox{where} \ {\cal G}_t^0:={\cal F}_t\vee\sigma\left(D_s,\ s\leq t\right)\ .
\end{equation}
 Throughout the paper, on $\Omega\times [0,+\infty)$, we consider the $\mathbb F$-optional  $\sigma$-field  denoted by ${\cal O}(\mathbb F)$ and  the $\mathbb F$-progressive  $\sigma$-field denoted by $\mbox{Prog}(\mathbb F)$. Thanks to  \cite[Theorem 3]{ACJ} and \cite[Theorem 2.3 and Theorem 2.11]{ChoulliDavelooseVanmaele}, we recall 
\begin{theorem}\label{Toperator} The following assertions hold.\\
{\rm{(a)}} For any $M\in{\cal M}_{loc}(\mathbb F)$,  the process
\begin{equation} \label{processMhat}
{\cal T}(M) := M^\tau \footnote{Throughout the paper, for any random time $\sigma$ and any process $X$, we denote by $X^{\sigma}$ the stopped process given by $X^{\sigma}_t:=X_{\sigma\wedge t},\ t\geq 0$.} -{\widetilde{G}}^{-1} I_{\Rbrack 0,\tau\Rbrack} \is [M,m] +  I_{\Rbrack 0,\tau\Rbrack} \is\Big(\sum \Delta M I_{\{\widetilde G=0<G_{-}\}}\Big)^{p,\mathbb F}\end{equation}
 is a $\mathbb G$-local martingale.\\
 {\rm{(b)}}  The process 
\begin{equation} \label{processNG}
N^{\mathbb G}:=D - \widetilde{G}^{-1} I_{\Rbrack 0,\tau\Rbrack} \is D^{o,\mathbb  F}
\end{equation}
is a $\mathbb G$-martingale with integrable variation. Moreover, $H\is N^{\mathbb G}$ is a $\mathbb G$-local martingale with locally integrable variation for any $H$ belonging to
\begin{equation} \label{SpaceLNG}
{\mathcal{I}}^o_{loc}(N^{\mathbb G},\mathbb G) := \Big\{K\in \mathcal{O}(\mathbb F):\quad \vert{K}\vert G{\widetilde G}^{-1} I_{\{\widetilde{G}>0\}}\is D\in{\cal A}_{loc}(\mathbb G)\Big\}.
\end{equation}
\end{theorem}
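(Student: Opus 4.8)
Both assertions belong to the classical toolkit of progressive enlargement, so my plan is to derive them from the transfer formulas that express $\mathbb{G}$-(local-)martingale properties on $\Lbrack 0,\tau\Rbrack$ in terms of $\mathbb{F}$-projections, together with the defining property of the dual optional projection $D^{o,\mathbb{F}}$, namely $\E[U_{\tau}I_{\{\tau<\infty\}}]=\E[\int_{0}^{\infty}U_{s}\,dD^{o,\mathbb{F}}_{s}]$ for every nonnegative $\mathbb{F}$-optional $U$.

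I would treat assertion (b) first, since it isolates the mechanism driving (a). The process $N^{\mathbb{G}}$ is the compensated indicator of $\tau$, so the claim reduces to showing that $\widetilde{G}^{-1}I_{\Rbrack 0,\tau\Rbrack}\is D^{o,\mathbb{F}}$ compensates $D$ in $\mathbb{G}$. I would verify the $\mathbb{G}$-martingale property directly, checking $\E[N^{\mathbb{G}}_t-N^{\mathbb{G}}_s\mid\mathcal{G}_s]=0$ by splitting on $\{\tau\le s\}$ and $\{\tau>s\}$: on the latter the $\mathbb{G}$-conditioning formulas collapse the expectation onto $\mathbb{F}$ with normalizer $G_s$, and the identity $\widetilde{G}=G+\Delta D^{o,\mathbb{F}}$ together with the evaluation formula above closes the computation. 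Integrable variation is immediate since both $D$ and $D^{o,\mathbb{F}}$ carry total mass at most one. The ``moreover'' statement then follows by localizing along the $\mathbb{G}$-stopping times that bring $\vert H\vert G\widetilde{G}^{-1}I_{\{\widetilde{G}>0\}}\is D$ into $\mathcal{A}_{loc}(\mathbb{G})$, which is exactly the condition defining membership in $\mathcal{I}^o_{loc}(N^{\mathbb{G}},\mathbb{G})$.

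For assertion (a) I would first recall that $M^{\tau}$ is a $\mathbb{G}$-semimartingale (by Jeulin's theorem every $\mathbb{F}$-local martingale stopped at $\tau$ remains a $\mathbb{G}$-semimartingale, with no assumption on $\tau$), and then exhibit the \emph{optional} decomposition $M^{\tau}=\mathcal{T}(M)+A$, where $A:=\widetilde{G}^{-1}I_{\Rbrack 0,\tau\Rbrack}\is[M,m]-I_{\Rbrack 0,\tau\Rbrack}\is\big(\sum\Delta M\,I_{\{\widetilde{G}=0<G_{-}\}}\big)^{p,\mathbb{F}}$ is a $\mathbb{G}$-optional process of finite variation. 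The bracket term $\widetilde{G}^{-1}I_{\Rbrack 0,\tau\Rbrack}\is[M,m]$ arises as the correction coming from the correlation between $M$ and the Az\'ema martingale $m=G+D^{o,\mathbb{F}}$; it would be produced by integration by parts on $M\,m$ and the same $\widetilde{G}$-normalization used in (b). The local-martingale property of $\mathcal{T}(M)$ I would then verify by reducing, after localization to bounded $M$, to $\mathbb{F}$-identities via the transfer formulas and the martingale property of $N^{\mathbb{G}}$ established in (b).

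The delicate point, which forces the last and most unusual correction term, is the thin set $\{\widetilde{G}=0<G_{-}\}$, on which $\tau$ may occur exactly at a jump of $G$ down to $0$. There $\widetilde{G}^{-1}$ degenerates, so $\widetilde{G}^{-1}I_{\Rbrack 0,\tau\Rbrack}\is[M,m]$ cannot absorb the jump that $M^{\tau}$ carries at such a $\tau$; the predictable projection $\big(\sum\Delta M\,I_{\{\widetilde{G}=0<G_{-}\}}\big)^{p,\mathbb{F}}$ is inserted precisely to compensate these otherwise uncompensated jumps. The main obstacle is therefore to justify, \emph{without} any immersion or avoidance hypothesis on $\tau$, that this $\mathbb{F}$-predictable projection is well defined (of locally integrable variation) and that it cancels the residual jumps of $M^{\tau}$ on $\{\widetilde{G}=0<G_{-}\}$ modulo a $\mathbb{G}$-local martingale. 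This is exactly where the full generality of $\tau$ makes the computation genuinely heavier than in the immersed or jump-avoiding cases, and where careful bookkeeping of $G$, $G_{-}$ and $\widetilde{G}$ on this set is essential.
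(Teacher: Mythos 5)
A preliminary remark: the paper does not prove Theorem \ref{Toperator} at all — it is recalled from \cite[Theorem 3]{ACJ} and \cite[Theorems 2.3 and 2.11]{ChoulliDavelooseVanmaele} — so your attempt can only be measured against those sources. Your treatment of the first half of (b) is sound and is indeed the standard route: splitting on $\{\tau\leq s\}$ and $\{\tau>s\}$, reducing to $\mathbb F$ via the conditioning formula of Lemma \ref{G-projection}, and closing with the identity $\widetilde G=G+\Delta D^{o,\mathbb F}$ and the defining property of $D^{o,\mathbb F}$ (one also needs, and you implicitly use, that the measure $dD^{o,\mathbb F}$ is carried by $\{\widetilde G>0\}$, since $\widetilde G_\tau>0$ a.s. on $\{\tau<\infty\}$). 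Integrable variation is, as you say, immediate.

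The genuine gap is in part (a), where you never actually prove anything: you describe the intended decomposition $M^\tau=\mathcal T(M)+A$ and then explicitly defer the decisive steps. Three things are missing. First, the well-posedness of the correction terms: $\widetilde G^{-1}I_{\Rbrack 0,\tau\Rbrack}\is[M,m]$ having locally integrable variation is not automatic, because $\widetilde G^{-1}$ need not be locally bounded on $\Lbrack 0,\tau\Rbrack$; in \cite{ACJ} this requires a dedicated integrability estimate (of the type $G{\widetilde G}^{-1}I_{\{\widetilde G>0\}}\is\mathrm{Var}([M,m])\in{\cal A}_{loc}$), and likewise the existence of $(\sum\Delta M\,I_{\{\widetilde G=0<G_-\}})^{p,\mathbb F}$ as a process of locally integrable variation must be proved, not assumed. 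Second, the core computation — identifying the $\mathbb G$-compensator of $M^\tau$ on $\Rbrack 0,\tau\Rbrack$ by integration by parts on $Mm$ and the transfer formulas — is precisely the proof of (a), and it is absent; "would be produced by" is a plan, not an argument. Third, you yourself label the cancellation of the residual jumps on the thin set $\{\widetilde G=0<G_-\}$ as "the main obstacle" and stop there, so the theorem's most delicate assertion is left unproved.

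There is a second, smaller gap in the "moreover" clause of (b): your localization argument is close to circular. Membership in ${\mathcal I}^o_{loc}(N^{\mathbb G},\mathbb G)$ grants local integrability of $\vert H\vert G\widetilde G^{-1}I_{\{\widetilde G>0\}}\is D$ by definition; what actually requires proof is that for a merely $\mathbb F$-\emph{optional} $H$ — which is $\mathbb G$-optional but not $\mathbb G$-predictable — the integral $H\is N^{\mathbb G}$ is a $\mathbb G$-local martingale. Standard stochastic integration preserves the (local) martingale property only for predictable integrands, so one must compute and match the $\mathbb G$-dual predictable projections of $H\is D$ and of $H\widetilde G^{-1}I_{\Rbrack 0,\tau\Rbrack}\is D^{o,\mathbb F}$ under the stated integrability condition; this computation, which is the content of \cite[Theorem 2.11]{ChoulliDavelooseVanmaele}, does not appear in your proposal.
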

 For any $q\in [1,+\infty)$ and a $\sigma$-algebra ${\cal H}$ on $\Omega\times [0,+\infty)$, we define
\begin{equation}\label{L1(PandD)Local}
L^q\left({\cal H}, P\otimes dD\right):=\left\{ X\ {\cal H}\mbox{-measurable}:\quad {E}[\vert X_{\tau}\vert^q I_{\{\tau<+\infty\}}]<+\infty\right\}.\end{equation}
Throughout the paper, we assume the following assumption 
\begin{eqnarray}\label{Assumption4Tau}
 G>0\quad (\mbox{i.e., $G$ is a positive process) and}\quad \tau<+\infty\quad P\mbox{-a.s.}.
\end{eqnarray}
Under the positivity of $G$, this process can be decomposed multiplicatively into two processes, which play central roles in the paper, as follows. 
\begin{lemma}\label{Decomposition4G} If $G>0$, then $\widetilde{G}>0$, $G_{-}>0$, and $G=G_0{\cal{E}}(G_{-}^{-1}\is m){\widetilde{\cal{E}}}$, where
\begin{equation}\label{EpsilonTilde}
\widetilde{\cal{E}}:={\cal E}\left(-{\widetilde{G}}^{-1}\is D^{o,\mathbb{F}}\right).\end{equation}
\end{lemma}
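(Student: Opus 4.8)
The plan is to establish the three assertions in the stated order, handling the two positivity claims quickly and then devoting the bulk of the work to the multiplicative decomposition, which is the real content.

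\textbf{Positivity of $\widetilde G$ and $G_-$.} First I would observe that $\widetilde G\ge G$: since $I_{\Lbrack0,\tau\Rbrack}\ge I_{\Lbrack0,\tau\Lbrack}$ pointwise and the $\mathbb F$-optional projection is monotone, taking projections gives $\widetilde G\ge G>0$; equivalently, from $\widetilde G=G+\Delta D^{o,\mathbb F}$ with $D^{o,\mathbb F}$ nondecreasing one reads off $\widetilde G\ge G>0$. For $G_-$ I would invoke the classical zero-set property of nonnegative supermartingales: writing $R:=\inf\{t:\ G_t=0\ \text{or}\ G_{t-}=0\}$, one has $G_R=0$ on $\{R<\infty\}$ and $G$ vanishes on $\Lbrack R,\infty\Lbrack$. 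Since $G>0$ everywhere forces $R=\infty$, this yields $G_-> 0$.

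\textbf{Reduction to a covariation identity.} With $G_-> 0$ and $G$ a strictly positive RCLL semimartingale, $G$ admits the stochastic-logarithm representation $G=G_0\,{\cal E}(G_-^{-1}\is G)$. Feeding in the $\mathbb F$-supermartingale decomposition $G=m-D^{o,\mathbb F}$, i.e. $dG=dm-dD^{o,\mathbb F}$, this reads
\begin{equation*}
G=G_0\,{\cal E}\big(G_-^{-1}\is m-G_-^{-1}\is D^{o,\mathbb F}\big).
\end{equation*}
On the other hand, setting $N:=G_-^{-1}\is m$ (a genuine local martingale, since $G_-^{-1}$ is predictable and, after localization, bounded, while $m$ is a BMO $\mathbb F$-martingale) and applying Yor's product formula ${\cal E}(X){\cal E}(Y)={\cal E}(X+Y+[X,Y])$ with $Y:=-\widetilde G^{-1}\is D^{o,\mathbb F}$, the claimed identity $G=G_0\,{\cal E}(N)\,\widetilde{\cal E}$ becomes equivalent to
\begin{equation*}
-G_-^{-1}\is D^{o,\mathbb F}=-\widetilde G^{-1}\is D^{o,\mathbb F}-[N,\ \widetilde G^{-1}\is D^{o,\mathbb F}].
\end{equation*}

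\textbf{Verification of the identity (the main obstacle).} The crux is this last identity, which I would establish through the integrands against $dD^{o,\mathbb F}$. Since $\widetilde G^{-1}\is D^{o,\mathbb F}$ has finite variation, $[N,\ \widetilde G^{-1}\is D^{o,\mathbb F}]=\sum \Delta N\,\widetilde G^{-1}\Delta D^{o,\mathbb F}$ with $\Delta N=G_-^{-1}\Delta m$. Using the fundamental relation $\widetilde G-G_-=\Delta m$ (equivalently $\widetilde G=G_-+\Delta m$), the difference of the two finite-variation terms, $(\widetilde G^{-1}-G_-^{-1})\is D^{o,\mathbb F}$, has integrand $\widetilde G^{-1}-G_-^{-1}=-\Delta m/(\widetilde G\,G_-)$; this is supported on the countably many jump times of $m$, so its integral against the continuous part of $D^{o,\mathbb F}$ vanishes, and the remaining jump contribution equals precisely $-[N,\ \widetilde G^{-1}\is D^{o,\mathbb F}]$ term by term. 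This is the rearranged form of the reduced identity, completing the decomposition. Along the way I must check that $\widetilde{\cal E}={\cal E}(-\widetilde G^{-1}\is D^{o,\mathbb F})$ is well defined and strictly positive: its jumps are $-\widetilde G^{-1}\Delta D^{o,\mathbb F}=G/\widetilde G-1>-1$, the strict inequality coming from $G>0$ via $\widetilde G-\Delta D^{o,\mathbb F}=G$, so no factor in the Dol\'eans--Dade product degenerates. I expect the genuine difficulties to be the bookkeeping in this covariation computation and the localization arguments guaranteeing that $G_-^{-1}\is m$ and $\widetilde G^{-1}\is D^{o,\mathbb F}$ are well-defined integrals; the supermartingale zero-set fact underpinning $G_-> 0$ is classical but should be stated with care.
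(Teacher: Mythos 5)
Your proof is correct. Note that the paper does not actually prove this lemma: it is stated as a recalled result with a pointer to \cite[Lemma 2.4]{Choulli1}, so there is no in-text argument to compare against line by line. Your proposal supplies a complete, self-contained proof along what is essentially the standard route, and every step checks out: $\widetilde{G}=G+\Delta D^{o,\mathbb F}\geq G>0$; $G_{-}>0$ via the zero-set theorem for nonnegative supermartingales (once $G$ or $G_{-}$ vanishes, $G$ is absorbed at zero, which $G>0$ forbids); the stochastic-logarithm representation $G=G_0\,{\cal E}(G_{-}^{-1}\is G)$ combined with $G=m-D^{o,\mathbb F}$; and the reduction via Yor's formula to the identity $(\widetilde{G}^{-1}-G_{-}^{-1})\is D^{o,\mathbb F}=-[G_{-}^{-1}\is m,\ \widetilde{G}^{-1}\is D^{o,\mathbb F}]$, which you verify correctly using the key relation $\widetilde{G}=G_{-}+\Delta m$ (so that $\widetilde{G}^{-1}-G_{-}^{-1}=-\Delta m/(\widetilde{G}G_{-})$ is supported on the thin set $\{\Delta m\neq0\}$, whose contribution against the atomless continuous part of $D^{o,\mathbb F}$ vanishes pathwise, the jump part matching the covariation term by term). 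Your check that $\Delta(-\widetilde{G}^{-1}\is D^{o,\mathbb F})=G/\widetilde{G}-1>-1$, so that $\widetilde{\cal E}>0$, is also the right nondegeneracy verification. Two cosmetic remarks: the BMO property of $m$ is not needed for $G_{-}^{-1}\is m$ to be a local martingale — local boundedness of $G_{-}^{-1}$ suffices, and that in turn follows from $G>0$ and $G_{-}>0$ since a positive RCLL path with positive left limits is bounded away from zero on compacts (so the localizing times $\inf\{t:\,G_{t}\wedge G_{t-}\leq 1/n\}$ increase to infinity); and $\widetilde{G}^{-1}\is D^{o,\mathbb F}$ is trivially well defined pathwise as a Stieltjes integral with $\widetilde{G}^{-1}\leq G^{-1}$ finite, so no further localization issue arises there.
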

For this lemma and related results, we refer the reader to \cite[Lemma 2.4]{Choulli1}. Below, we recall \cite[Proposition 4.3]{Choulli1} that will play important role throughout the paper.
\begin{proposition}\label{ZandQ}Suppose that $G>0$ and consider the process
 \begin{equation}\label{Ztilde}
\widetilde{Z}:=1/{\cal E}(G_{-}^{-1}\is{m}).
\end{equation}
Then the following assertions hold.\\
{\rm{(a)}} $\widetilde{Z}^{\tau}$ is a $\mathbb G$-martingale, and for any $T\in (0,+\infty)$, $\widetilde{Q}_T$ given by 
 \begin{equation}\label{Qtilde}
 \frac{d{\widetilde{Q}_T}}{dP}:=\widetilde{Z}_{T\wedge\tau}.
\end{equation}
is well defined probability measure on ${\cal G}_{\tau\wedge T}$.\\
{\rm{(b)}} For any  $M\in {\cal M}_{loc}(\mathbb F)$, we have $M^{T\wedge \tau}\in {\cal M}_{loc}(\mathbb G, \widetilde{Q})$.  In particular $W^{T\wedge\tau}$ is a Brownian motion for $(\widetilde{Q}, \mathbb G)$, for any $T\in (0,+\infty)$.
\end{proposition}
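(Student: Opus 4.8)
The plan is to prove (a) by writing $\widetilde Z^\tau$ as the stochastic exponential of an explicit $\mathbb G$-local martingale built from ${\cal T}(m)$, and then to promote this local-martingale property to a genuine martingale property by a direct expectation computation; assertion (b) will then follow from Girsanov's theorem and L\'evy's characterization. First I would record the positivity needed for $\widetilde Z$ to make sense. Since $m=G+D^{o,\mathbb F}$ we have $G_-+\Delta m=G+\Delta D^{o,\mathbb F}=\widetilde G$, so the jump of $G_-^{-1}\is m$ satisfies $1+G_-^{-1}\Delta m=\widetilde G/G_->0$ by Lemma~\ref{Decomposition4G}; hence ${\cal E}(G_-^{-1}\is m)$ is a strictly positive $\mathbb F$-local martingale and $\widetilde Z$ is a well-defined, strictly positive $\mathbb F$-semimartingale. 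The formula for the inverse of a stochastic exponential then gives $\widetilde Z={\cal E}(X')$ with
\[ X'=-G_-^{-1}\is m+G_-^{-2}\is\langle m^c\rangle+\sum_{0<s\le\cdot}\frac{(\Delta m_s)^2}{G_{s-}\widetilde G_s}, \]
where $1+G_-^{-1}\Delta m=\widetilde G/G_-$ was used to simplify the jump correction.

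The heart of (a) is a cancellation. Stopping at $\tau$ and applying Theorem~\ref{Toperator}(a) --- whose last term disappears because $G>0$ forces $\widetilde G>0$ and thus $\{\widetilde G=0<G_-\}=\emptyset$ --- yields $m^\tau={\cal T}(m)+\widetilde G^{-1}I_{\Rbrack0,\tau\Rbrack}\is[m,m]$. Inserting this into $(X')^\tau$ and splitting $[m,m]=\langle m^c\rangle+\sum(\Delta m)^2$, the pure-jump terms cancel identically, while the surviving continuous term equals $\frac{\widetilde G-G_-}{G_-^2\widetilde G}I_{\Rbrack0,\tau\Rbrack}\is\langle m^c\rangle$ and vanishes, because $\widetilde G-G_-=\Delta m$ is supported on a countable set whereas $\langle m^c\rangle$ is continuous. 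Therefore $(X')^\tau=-G_-^{-1}\is{\cal T}(m)$ and
\[ \widetilde Z^\tau={\cal E}\bigl(-G_-^{-1}\is{\cal T}(m)\bigr), \]
which, after localizing on the sets where $G_-$ is bounded away from $0$, is a strictly positive $\mathbb G$-local martingale, hence a $\mathbb G$-supermartingale.

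The step I expect to be the main obstacle is upgrading this supermartingale to a true $\mathbb G$-martingale, since positive local martingales are in general only supermartingales. I would settle it by proving $E[\widetilde Z_{t\wedge\tau}]=1$ for every $t\ge0$, which forces the supermartingale to have constant expectation and hence to be a martingale. Using $\widetilde Z=G_0\widetilde{\cal E}/G$ from Lemma~\ref{Decomposition4G}, I split $E[\widetilde Z_{t\wedge\tau}]=E[\widetilde Z_tI_{\{\tau>t\}}]+E[\widetilde Z_\tau I_{\{\tau\le t\}}]$. Conditioning the first term on ${\cal F}_t$ via $E[I_{\{\tau>t\}}\,|\,{\cal F}_t]=G_t$ turns it into $G_0E[\widetilde{\cal E}_t]=G_0\bigl(1-E[\int_{(0,t]}\widetilde{\cal E}_{s-}\widetilde G_s^{-1}dD^{o,\mathbb F}_s]\bigr)$; the second term, rewritten through the dual optional projection $D^{o,\mathbb F}$ and the identity $\widetilde{\cal E}_sG_s^{-1}=\widetilde{\cal E}_{s-}\widetilde G_s^{-1}$ (itself a consequence of $1-\widetilde G^{-1}\Delta D^{o,\mathbb F}=G/\widetilde G$), produces exactly the compensating contribution. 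Keeping track of the possible atom of $\tau$ at $0$, the sum telescopes to $1$. Thus $\widetilde Z^\tau$ is a $\mathbb G$-martingale, and since $\widetilde Z_{T\wedge\tau}\ge0$ with unit expectation, the measure $\widetilde Q_T$ of \eqref{Qtilde} is a well-defined probability on ${\cal G}_{\tau\wedge T}$.

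For (b), the $(P,\mathbb G)$-density process of $\widetilde Q_T$ is $\widetilde Z^{T\wedge\tau}$ (by optional stopping of the $\mathbb G$-martingale $\widetilde Z^\tau$), so by Girsanov's theorem $M^{T\wedge\tau}$ is a $(\widetilde Q_T,\mathbb G)$-local martingale as soon as $M^\tau\widetilde Z^\tau$ is a $(P,\mathbb G)$-local martingale. I would verify the latter by integration by parts, using $M^\tau={\cal T}(M)+\widetilde G^{-1}I_{\Rbrack0,\tau\Rbrack}\is[M,m]$ and $d\widetilde Z^\tau=-\widetilde Z^\tau_-G_-^{-1}d{\cal T}(m)$, and checking that the finite-variation part of the product cancels by the same bracket mechanism as in (a) (now involving $[{\cal T}(M),{\cal T}(m)]$ together with $\widetilde G=G_-+\Delta m$). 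Finally, taking $M=W$ shows that $W^{T\wedge\tau}$ is a continuous $(\widetilde Q_T,\mathbb G)$-local martingale with $\langle W^{T\wedge\tau}\rangle_t=t\wedge T\wedge\tau$, so L\'evy's characterization identifies it as a Brownian motion for $(\widetilde Q_T,\mathbb G)$ stopped at $T\wedge\tau$.
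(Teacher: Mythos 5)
Your proof is correct, but there is nothing in-paper to compare it against: the paper does not prove Proposition~\ref{ZandQ} at all, it recalls it verbatim from \cite[Proposition 4.3]{Choulli1}. What you have produced is therefore a self-contained derivation from the paper's own toolkit (Theorem~\ref{Toperator}, Lemma~\ref{Decomposition4G}, the dual optional projection), and every step checks out. The driving identity $\widetilde G=G_-+\Delta m$ (equivalently $\Delta D^{o,\mathbb F}=\widetilde G-G$) is right; the inverse-exponential formula gives your $X'$, and after substituting $m^\tau={\cal T}(m)+\widetilde G^{-1}I_{\Rbrack0,\tau\Rbrack}\is[m,m]$ the jump sums cancel while the residual continuous term $\frac{\Delta m}{G_-^2\widetilde G}I_{\Rbrack0,\tau\Rbrack}\is\langle m^c\rangle$ vanishes because $d\langle m^c\rangle$ is atomless, yielding $\widetilde Z^\tau={\cal E}(-G_-^{-1}\is{\cal T}(m))$. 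Your upgrade from positive $\mathbb G$-local martingale (hence supermartingale) to true martingale via $E[\widetilde Z_{t\wedge\tau}]=1$ also telescopes exactly as claimed: with $\widetilde Z=G_0\widetilde{\cal E}/G$, the projection identity $E[\widetilde Z_\tau I_{\{0<\tau\le t\}}]=E[\int_{(0,t]}\widetilde Z_s\,dD^{o,\mathbb F}_s]$ and $\widetilde{\cal E}_sG_s^{-1}=\widetilde{\cal E}_{s-}\widetilde G_s^{-1}$ make the two contributions cancel, the atom at $0$ contributing $E[1-G_0]$; note the integrals are bounded by $1$, so no integrability issue arises in the cancellation.

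Two cosmetic points. First, keep $G_0$ inside the expectations: in the generality of Section~2 it is ${\cal F}_0$-measurable, not a constant (your telescoping is unaffected). Second, in part (b) the bracket you actually need is $[M^\tau,{\cal T}(m)]$, not $[{\cal T}(M),{\cal T}(m)]$; with $M^\tau={\cal T}(M)+\widetilde G^{-1}I_{\Rbrack0,\tau\Rbrack}\is[M,m]$, the finite-variation part of $M^\tau\widetilde Z^\tau$ is $\widetilde Z_-\bigl(\widetilde G^{-1}-G_-^{-1}\bigr)I_{\Rbrack0,\tau\Rbrack}\is[M,m]+\widetilde Z_-(G_-\widetilde G)^{-1}I_{\Rbrack0,\tau\Rbrack}\is\bigl(\sum\Delta M(\Delta m)^2\bigr)$, and since $\widetilde G^{-1}-G_-^{-1}=-\Delta m/(G_-\widetilde G)$ annihilates the continuous part of $[M,m]$, the two jump sums cancel — exactly the mechanism you describe. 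Girsanov with the strictly positive density process $\widetilde Z_{\cdot\wedge T\wedge\tau}$ and L\'evy's characterization (applied to the stopped bracket $t\wedge T\wedge\tau$, which is measure-invariant) then finish (b) as written.
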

\begin{remark} In general, the $\mathbb G$-martingale $\widetilde{Z}^{\tau}$ might not be uniformly integrable, and hence in general $\widetilde{Q}$ might not be well defined for $T=\infty$. For these facts, we refer the reader to \cite[Proposition 4.3]{Choulli1}, where conditions for $\widetilde{Z}^{\tau}$  being uniformly integrable are fully singled out when $G>0$. 
\end{remark}
We recall the following lemma.
\begin{lemma} \label{G-projection}For any nonnegative or integrable process $X$, we always have 
\begin{eqnarray*}\label{converting}E\left[X_{t}|\mathcal{G}_{t}\right]I_{\{t\ <\tau\}}={E\left[X_{t}I_{\{t\ <\tau\}}|\mathcal{F}_{t}\right]}G_t^{-1}I_{\{t\ <\tau\}}.
\end{eqnarray*}
\end{lemma}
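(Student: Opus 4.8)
The plan is to reduce the $\mathbb{G}$-conditional expectation to an $\mathbb{F}$-conditional expectation by means of the classical ``key lemma'' of progressive enlargement, and then to pin down the multiplier by testing against $\mathcal{F}_t$-measurable random variables. Set $h_t:=E[X_t I_{\{t<\tau\}}|\mathcal{F}_t]\,G_t^{-1}$, which is well defined and $\mathcal{F}_t$-measurable thanks to the standing positivity $G>0$ in (\ref{Assumption4Tau}). Since $\{t<\tau\}=\{D_t=0\}\in\mathcal{G}_t$ and $\mathcal{F}_t\subseteq\mathcal{G}_t$, the candidate $h_t I_{\{t<\tau\}}$ is $\mathcal{G}_t$-measurable, so the assertion amounts to the equality of two $\mathcal{G}_t$-measurable random variables. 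By the defining property of the conditional expectation, it therefore suffices to prove
\begin{equation*}
E[X_t I_{\{t<\tau\}} V]=E[h_t I_{\{t<\tau\}} V]
\end{equation*}
for every bounded $\mathcal{G}_t$-measurable $V$; indeed, for such $V$ the random variable $I_{\{t<\tau\}}V$ is again $\mathcal{G}_t$-measurable, whence $E[E[X_t|\mathcal{G}_t]\,I_{\{t<\tau\}}V]=E[X_t I_{\{t<\tau\}}V]$ by the tower property.

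First I would invoke the key lemma: because $\mathcal{G}_t^0=\mathcal{F}_t\vee\sigma(D_s,\ s\le t)$ and $D_s=0$ on $\{t<\tau\}$ for all $s\le t$, every bounded $\mathcal{G}_t$-measurable $V$ satisfies $I_{\{t<\tau\}}V=I_{\{t<\tau\}}\widetilde V$ for some bounded $\mathcal{F}_t$-measurable $\widetilde V$. Substituting $\widetilde V$ for $V$ on both sides and conditioning on $\mathcal{F}_t$ reduces the displayed identity to
\begin{equation*}
E[\widetilde V\, E[X_t I_{\{t<\tau\}}|\mathcal{F}_t]]=E[\widetilde V\, h_t\, G_t],
\end{equation*}
where on the right I used that $h_t,\widetilde V$ are $\mathcal{F}_t$-measurable and that $E[I_{\{t<\tau\}}|\mathcal{F}_t]=G_t$ by the definition of $G$ in (\ref{GGtilde}). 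This equality is now immediate from $h_t G_t=E[X_t I_{\{t<\tau\}}|\mathcal{F}_t]$, which closes the argument. For a general nonnegative $X$ the identity follows by monotone convergence after truncation of $X_t$, and the integrable case by decomposing $X_t$ into its positive and negative parts.

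The only genuinely delicate point is the key-lemma reduction, namely that the extra $\sigma(D_s,\ s\le t)$-information collapses on $\{t<\tau\}$ even after passing to the right-continuous regularization $\mathcal{G}_t=\mathcal{G}_{t+}^0$. I would handle this not by re-deriving it from scratch but by a monotone-class argument on generators of the form $A\cap\{t<\tau\}$ with $A\in\mathcal{F}_t$, exploiting that the events $\{s<\tau\}$, $s>t$, increase to $\{t<\tau\}$ as $s\downarrow t$; this is precisely where the right-continuity of $\mathbb{G}$ is used. Everything else is routine bookkeeping with conditional expectations and the positivity of $G$ that makes the division by $G_t$ legitimate.
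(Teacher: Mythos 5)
Your proof is correct, but it takes a genuinely different route from the paper for the simple reason that the paper offers no proof at all: it disposes of the lemma with the single line ``This follows from \cite[XX.75-(c)/(d)]{DMM}.'' What you have written is essentially a self-contained reconstruction of that cited result, the classical key lemma of progressive enlargement. Your skeleton is sound at every joint: the reduction to testing against bounded $\mathcal{G}_t$-measurable $V$ (legitimate since $\{t<\tau\}=\{D_t=0\}\in\mathcal{G}_t$ and both candidate random variables are $\mathcal{G}_t$-measurable, with $E[|h_t|I_{\{t<\tau\}}]=E[|h_t|G_t]<\infty$ in the integrable case), the collapse $I_{\{t<\tau\}}V=I_{\{t<\tau\}}\widetilde{V}$ with $\widetilde{V}$ bounded and $\mathcal{F}_t$-measurable, the identification $h_tG_t=E[X_tI_{\{t<\tau\}}|\mathcal{F}_t]$ after conditioning, and the passage to general nonnegative $X$ by truncation and monotone convergence of conditional expectations, then to integrable $X$ by splitting into positive and negative parts. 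One small attribution slip is worth fixing in your final paragraph: in the regularization step, what you actually need is the right-continuity and completeness of $\mathbb{F}$, so that $\widetilde{V}:=\limsup_{n}\widetilde{V}_{s_n}$ along a sequence $s_n\downarrow t$ is $\mathcal{F}_{t+}=\mathcal{F}_t$-measurable; the right-continuity of $\mathbb{G}$, i.e.\ the definition $\mathcal{G}_t=\mathcal{G}^0_{t+}$ in (\ref{processD}), is the \emph{source} of the difficulty rather than the tool resolving it, while the monotonicity $\{s<\tau\}\uparrow\{t<\tau\}$ as $s\downarrow t$ (valid because $\tau>t$ forces $\tau>s$ for $s\in(t,\tau)$) guarantees $\widetilde{V}_{s_n}=V$ eventually on $\{t<\tau\}$, hence $\widetilde{V}=V$ there. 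As for what each approach buys: the paper's citation keeps the exposition lean and rests on a canonical reference, whereas your derivation makes explicit exactly where the standing assumption $G>0$ from (\ref{Assumption4Tau}) enters (legitimizing the division by $G_t$) and how the structure $\mathcal{G}^0_t=\mathcal{F}_t\vee\sigma(D_s,\ s\leq t)$ is exploited --- a useful transparency, since this conversion formula is used repeatedly in the paper's later estimates.
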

This follows from \cite[XX.75-(c)/(d)]{DMM}. 

\section{Linear RBSDEs with bounded horizon}\label{LinearboundedSection}

Throughout the rest of the paper, we suppose given a standard Brownian motion $W=(W_t)_{t\geq 0}$, and $\mathbb F:=({\cal F}_t)_{t\geq 0}$ is its {\it completed natural filtration}. In this section, our aim lies in addressing
 \begin{equation}\label{RBSDEG}
\begin{split}
&dY_t=-f(t)d(t\wedge\tau)-d(K_t+M_t)+ZdW_{t\wedge\tau},\quad Y_{T\wedge\tau}=\xi,\\
& Y_{t}\geq S_{t};\quad 0\leq t<  T\wedge\tau,\quad \displaystyle\int_{0}^{T\wedge\tau}(Y_{t-}-S_{t-})dK_{t}=0,\ P\mbox{-a.s..}
\end{split}
\end{equation}
Hereto the data-triplet $\left(f, S, \xi\right)$ is given as follows: The driver of the RBSDE $f$ is an $\mathbb F$-progressively measurable process, the barrier of the RBSDE $S$ is a RCLL $\mathbb F$-adapted process, and the terminal condition $\xi$ satisfies 
\begin{equation}\label{sh}
 \xi=h_{T\wedge\tau},\quad\rm{and}\quad \xi\geq S_{\tau\wedge T},\quad P\rm{-a.s.},
 \end{equation}
 for  an $\mathbb{F}$-optional $h$. The first condition above, which can be relaxed completely, is equivalent to say $\xi$ is ${\cal F}_{T\wedge\tau}$-measurable. Therefore, the $\mathbb G$-triplet-data $\left(f, S, \xi\right)$ is equivalently parametrized by the $\mathbb F$-triplet-data $\left(f, S, h\right)$. 
The rest of this section is divided into two subsections. The first subsection elaborates prior estimates for the solution of the RBSDE (when it exists), while the second subsection proves the existence and uniqueness of the solution to (\ref{RBSDEG}) and describes its $\mathbb F$-RBSDE counterpart.
\subsection{Priori estimates for the solution of (\ref{RBSDEG})} 
   This subsection elaborates priori estimates for the solution of (\ref{RBSDEG}) under $\widetilde{Q}_T$. Besides being important in controlling the solution using the data-triplet, they are vital for the unbounded horizon case, when $T$ goes to infinite. Thus, these estimates should have universal constants to be useful and applicable.    
   \begin{theorem}\label{EstimatesUnderQtilde} For any $p\in (1,+\infty)$, there exists $C\in(0,\infty)$, which depends on $p$ only, such that if ($Y^{\mathbb{G}},Z^{\mathbb{G}},K^{\mathbb{G}}, M^{\mathbb{G}}$) is $(\mathbb{G},\widetilde{Q})$-solution to (\ref{RBSDEG}), then 
\begin{equation*}
\norm{\norm{\left({Y }^{\mathbb{G}},{Z}^{\mathbb{G}},{M }^{\mathbb{G}},{K}^{\mathbb{G}}\right)}}_{(\widetilde{Q},{\mathbb{G}},p)}\leq{C}\Delta_{\widetilde{Q}}(\xi,f,S^+).\end{equation*}
Here the norm $\norm{\norm{...}}_{(\widetilde{Q},{\mathbb{G}},p)}$ is defined via (\ref{Norm4(Y,Z,K,M)}) for the product space\\ $\mathbb{D}_{T\wedge\tau}(\widetilde{Q},p)\times\mathbb{S}_{T\wedge\tau}(\widetilde{Q},p)\times{\cal M}^p(\widetilde{Q})\times{\cal{A}}_{T\wedge\tau}(\widetilde{Q},p)$ (i.e $\sigma=T\wedge\tau$), and 
\begin{equation}\label{Delta(xi,f,S)}
\Delta_{\widetilde{Q}}(\xi,f,S^+):=\Vert\xi\Vert_{L^p(\widetilde{Q})}+\Vert\int_{0}^{T\wedge\tau}\vert{f}(s)\vert{d}s\Vert_{L^p(\widetilde{Q})}+\Vert{S^{+}}\Vert_{\mathbb{D}_{T\wedge\tau}(\widetilde{Q},p)}.\end{equation}
\end{theorem}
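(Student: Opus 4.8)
The plan is to exploit the one structural fact that tames the random horizon: by Proposition \ref{ZandQ}(b), under $\widetilde{Q}$ the stopped process $W^{T\wedge\tau}$ is a genuine $(\widetilde{Q},\mathbb{G})$-Brownian motion. Hence, once we pass to $\widetilde{Q}$, the RBSDE (\ref{RBSDEG}) becomes an ordinary reflected BSDE on the stochastic interval $\Lbrack 0,T\wedge\tau\Rbrack$, driven by a true Brownian motion together with the purely-discontinuous $\mathbb{G}$-martingale $M^{\mathbb{G}}$ (carried by the jump of $\tau$ and hence orthogonal to $W^{T\wedge\tau}$), and the classical toolbox (Doob's maximal inequality, the Burkholder--Davis--Gundy inequalities) applies verbatim with its universal constants. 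This is exactly what will force $C$ to depend on $p$ alone, with no dependence on $T$ or on the law of $\tau$. I would therefore fix $p\in(1,\infty)$, work throughout under $(\widetilde{Q},\mathbb{G})$ with $\sigma=T\wedge\tau$, and run the standard four-step a priori estimate.

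The first step estimates $Y^{\mathbb{G}}$. Since the driver $f(t)$ does not depend on $(Y,Z)$, any solution admits a Snell-envelope representation: integrating (\ref{RBSDEG}) between $t$ and $\theta\in\mathcal{J}_{t}^{T\wedge\tau}(\mathbb{G})$ and taking $\widetilde{Q}$-conditional expectation, the monotonicity $dK\geq 0$ yields $Y_t\geq E^{\widetilde{Q}}[\int_t^\theta f\,ds+Y_\theta\mid\mathcal{G}_t]$, while the minimality condition $\int(Y_{-}-S_{-})\,dK=0$ shows equality for $\theta^{*}=\inf\{s\geq t:\ Y_s=S_s\}\wedge(T\wedge\tau)$. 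Combining both directions with $Y\geq S$ on $\Lbrack 0,T\wedge\tau\Lbrack$ and $Y_{T\wedge\tau}=\xi$ gives the pointwise bound $|Y_t|\leq E^{\widetilde{Q}}[\Xi\mid\mathcal{G}_t]$, where $\Xi:=|\xi|+\int_0^{T\wedge\tau}|f(s)|\,ds+\sup_{0\leq s\leq T\wedge\tau}S_s^{+}$. Doob's $L^{p}(\widetilde{Q})$ inequality then yields $\Vert Y\Vert_{\mathbb{D}_{T\wedge\tau}(\widetilde{Q},p)}\leq \tfrac{p}{p-1}\Vert\Xi\Vert_{L^{p}(\widetilde{Q})}\leq \tfrac{p}{p-1}\,\Delta_{\widetilde{Q}}(\xi,f,S^{+})$.

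The second step treats the martingale and reflection parts jointly. Writing $\mathcal{N}:=Z\is W^{T\wedge\tau}-M$ for the martingale part of $Y$, orthogonality gives $[\mathcal{N}]_{T\wedge\tau}=\int_0^{T\wedge\tau}|Z_s|^2\,ds+[M,M]_{T\wedge\tau}$, so that $\vert\Vert(M,Z)\Vert\vert_{(\widetilde{Q},\mathbb{G},p)}$ is comparable to $\Vert[\mathcal{N}]_{T\wedge\tau}^{1/2}\Vert_{L^p(\widetilde{Q})}$. Applying It\^o's formula to $|Y|^2$ on $\Lbrack 0,T\wedge\tau\Rbrack$ and rearranging, $[\mathcal{N}]_{T\wedge\tau}$ is expressed through $|\xi|^2$, a drift term dominated by $\sup|Y|\int|f|$, the reflection term $\int Y_{-}\,dK=\int S_{-}\,dK\leq \Vert S^{+}\Vert_{\mathbb{D}}K_{T\wedge\tau}$ (here I invoke minimality), and a stochastic integral against $\mathcal{N}$. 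Raising to the power $p/2$, taking $E^{\widetilde{Q}}$, and controlling the stochastic-integral term by BDG followed by Young's inequality, I would obtain $E^{\widetilde{Q}}[[\mathcal{N}]_{T\wedge\tau}^{p/2}]\leq c\,\Delta^{p}+c\,\Vert Y\Vert_{\mathbb{D}}^{p}+\varepsilon\,E^{\widetilde{Q}}[[\mathcal{N}]_{T\wedge\tau}^{p/2}]+\varepsilon\,E^{\widetilde{Q}}[K_{T\wedge\tau}^{p}]$. Separately, reading (\ref{RBSDEG}) at the two endpoints gives $K_{T\wedge\tau}=Y_0-\xi-\int_0^{T\wedge\tau}f\,ds-\mathcal{N}_{T\wedge\tau}$, whence $\Vert K\Vert_{{\cal{A}}_{T\wedge\tau}(\widetilde{Q},p)}\leq c(\Vert Y\Vert_{\mathbb{D}}+\Delta)+c\,\Vert[\mathcal{N}]_{T\wedge\tau}^{1/2}\Vert_{L^p(\widetilde{Q})}$ by BDG.

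The heart of the argument, and the step I expect to be the main obstacle, is closing the circular dependence between the martingale norm $(Z,M)$ and the reflection norm $K$: each is bounded in terms of the other, so neither inequality alone closes. The resolution is to substitute the $K$-estimate into the It\^o--$|Y|^2$ inequality and choose $\varepsilon$ small (depending on $p$ through the BDG constant) so that the $[\mathcal{N}]$- and $K$-terms on the right are absorbed into the left; combined with $\Vert Y\Vert_{\mathbb{D}}\leq \tfrac{p}{p-1}\Delta$ from the first step, this yields $E^{\widetilde{Q}}[[\mathcal{N}]_{T\wedge\tau}^{p/2}]+E^{\widetilde{Q}}[K_{T\wedge\tau}^{p}]\leq C(p)\,\Delta^{p}$. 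Summing the four bounds and recalling the definition (\ref{Norm4(Y,Z,K,M)}) of $\norm{\Vert\cdot\Vert}_{(\widetilde{Q},\mathbb{G},p)}$ delivers the claim with a constant depending on $p$ only; it is precisely the opening passage to $\widetilde{Q}$ that keeps this constant free of $T$ and of $\tau$.
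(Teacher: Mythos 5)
Your proposal is correct and follows essentially the same route as the paper's proof: the Snell-envelope representation of $Y^{\mathbb{G}}$ (Lemma \ref{Solution2SnellEnvelop}) plus Doob under $(\widetilde{Q},\mathbb{G})$, the endpoint identity for $K^{\mathbb{G}}_{T\wedge\tau}$ with BDG, and It\^o on $(Y^{\mathbb{G}})^2$ with Young's inequality and a small-$\epsilon$ absorption to break exactly the circular dependence between $\Vert\vert(M^{\mathbb{G}},Z^{\mathbb{G}})\Vert\vert$ and $\Vert K^{\mathbb{G}}\Vert$ that you identify. The only detail you leave implicit is the jump cross term $\Delta K^{\mathbb{G}}\is M^{\mathbb{G}}$ arising in $[Y^{\mathbb{G}},Y^{\mathbb{G}}]$, which the paper controls by the domination $\vert\Delta K^{\mathbb{G}}\vert\leq 2\overline{M}_{-}$ together with the Fefferman-type martingale inequality of Lemma \ref{Lemma4.8FromChoulliThesis} — this is precisely the ``BDG followed by Young'' device you describe, so it is a matter of bookkeeping rather than a gap.
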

Our contribution in these priori estimate is two folds. On the one hand, our estimates have {\it universal constants}, as these depend on $p$ only and do not depend on the horizon $T$ in contrast to the majority of the BSDE literature. In fact, these estimate  in \cite{Bouchard}, which is among the most recent and is the cutting edge in  this direction, depend on $T$ and cannot be applicable to our setting. For this dependence on $T$, we also refer the reader \cite{Pardoux,Klimsiak1,KrusePopier,ZhangJiang,Ouknine} and the references therein to cite a few. On the other hand, our estimates are for any $p\in(1,\infty)$ and the proof is original. Indeed the very few in the literature --where the constant is universal-- are elaborated for $p\in(1,2]$ only, see \cite{ElJamali,Klimsiak2,Briandetal} and the references therein to cite a few. Hence, our approach, which is applicable to any mathematical model  and not only the Brownian case, extends these to any $p\in(1,\infty)$ and gives a unified approach no matter what is $p\in(1,\infty)$. \\

The proof of Theorem \ref{EstimatesUnderQtilde} relies on useful intermediate results, that we state in two lemmas below.
    \begin{lemma}\label{Lemma4.11} The following assertions hold.\\
         {\rm{(a)}} For any $T\in (0,+\infty)$ and any $t\in (0,T]$, we have 
        $E^{\widetilde{Q}}\left[D^{o,\mathbb{F}}_{T\wedge\tau} -D^{o,\mathbb{F}}_{(t\wedge\tau)-}\big|{\cal G}_{t}\right]\leq\widetilde{G}_{t\wedge\tau}\leq 1$, $P$-a.s..\\
         {\rm{(b)}} For any $T\in (0,+\infty)$ and any $t\in (0,T]$, we have 
    $E\left[\int_{t\wedge\tau}^{T\wedge\tau}{\widetilde G}^{-1}_s dD^{o,\mathbb F}_s\big| {\cal G}_t\right]\leq 1$, $P$-a.s.. Furthermore, for $a\in(0,+\infty)$, the process ${\max(a,1)}{\widetilde G}^{-1}\is{ D}^{o,\mathbb F}-\widetilde {V}^{(a)}$ is nondecreasing, where  $\widetilde {V}^{(a)}$ is defined by 
      \begin{eqnarray}\label{Vepsilon}
   \widetilde {V}^{(a)}:={{a}\over{{\widetilde G}}}\is{ D}^{o,\mathbb F}+\sum \left(-{{a\Delta D^{o,\mathbb F}}\over{\widetilde G}}+1-\left(1-{{\Delta D^{o,\mathbb F}}\over{\widetilde G}}\right)^a\right).
       \end{eqnarray}
   \end{lemma}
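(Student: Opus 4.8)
The plan is to handle the two assertions with the two $\mathbb{G}$-martingales at our disposal, and to reduce the final monotonicity claim to an elementary scalar inequality. Throughout I would use the classical relation $\widetilde{G}=G+\Delta D^{o,\mathbb{F}}$ (which follows from $m=G+D^{o,\mathbb{F}}$ in (\ref{processm}) together with the standard jump identity $\Delta m=\widetilde{G}-G_{-}$), combined with $G\geq 0$ and $\widetilde{G}>0$ from Lemma \ref{Decomposition4G}. These yield in particular $0\leq \Delta D^{o,\mathbb{F}}/\widetilde{G}\leq 1$, a bound used repeatedly. The only genuinely delicate point, flagged below, is upgrading a local-martingale statement to a true conditional equality in part (a); everything else is a direct application of a martingale property or a convexity argument.

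For the first inequality of (b) I would work under $P$ with $N^{\mathbb{G}}$. By Theorem \ref{Toperator}(b), $N^{\mathbb{G}}=D-\Lambda$, with $\Lambda:=\widetilde{G}^{-1}I_{\Rbrack 0,\tau\Rbrack}\is D^{o,\mathbb{F}}$, is a $\mathbb{G}$-martingale, and both $D$ and $\Lambda$ are stopped at $\tau$. Stopping $N^{\mathbb{G}}$ at $T\wedge\tau$ and using $t\leq T$, the martingale property gives
\begin{equation*}
E\big[\Lambda_{T\wedge\tau}-\Lambda_{t\wedge\tau}\mid\mathcal{G}_t\big]=E\big[D_{T\wedge\tau}-D_{t\wedge\tau}\mid\mathcal{G}_t\big]=P(t<\tau\leq T\mid\mathcal{G}_t)\leq 1 .
\end{equation*}
Since $I_{\Rbrack 0,\tau\Rbrack}=1$ on the interval $(t\wedge\tau,T\wedge\tau]$, the left-hand side equals $E[\int_{t\wedge\tau}^{T\wedge\tau}\widetilde{G}^{-1}_s\,dD^{o,\mathbb{F}}_s\mid\mathcal{G}_t]$, which is exactly the claim.

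For (a) I would pass to $\widetilde{Q}$ and use $m$. Writing $D^{o,\mathbb{F}}=m-G$, Proposition \ref{ZandQ}(b) gives $m^{T\wedge\tau}\in\mathcal{M}_{loc}(\mathbb{G},\widetilde{Q})$. The crux is to upgrade this to the conditional equality $E^{\widetilde{Q}}[m_{T\wedge\tau}\mid\mathcal{G}_t]=m_{t\wedge\tau}$: localizing by $\mathbb{G}$-stopping times $\sigma_n\uparrow\infty$, decomposing $m_{T\wedge\tau\wedge\sigma_n}=G_{T\wedge\tau\wedge\sigma_n}+D^{o,\mathbb{F}}_{T\wedge\tau\wedge\sigma_n}$, and letting $n\to\infty$ via conditional monotone convergence for the increasing term $D^{o,\mathbb{F}}$ and conditional bounded convergence for $0\leq G\leq 1$ (the limit is finite since it is bounded by $m_{t\wedge\tau}$). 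Granting this, and pulling the $\mathcal{G}_t$-measurable quantity $D^{o,\mathbb{F}}_{(t\wedge\tau)-}$ out, I would compute
\begin{equation*}
E^{\widetilde{Q}}\big[D^{o,\mathbb{F}}_{T\wedge\tau}-D^{o,\mathbb{F}}_{(t\wedge\tau)-}\mid\mathcal{G}_t\big]=m_{t\wedge\tau}-D^{o,\mathbb{F}}_{(t\wedge\tau)-}-E^{\widetilde{Q}}[G_{T\wedge\tau}\mid\mathcal{G}_t]=\widetilde{G}_{t\wedge\tau}-E^{\widetilde{Q}}[G_{T\wedge\tau}\mid\mathcal{G}_t],
\end{equation*}
where the last step uses $m_{t\wedge\tau}-D^{o,\mathbb{F}}_{(t\wedge\tau)-}=G_{t\wedge\tau}+\Delta D^{o,\mathbb{F}}_{t\wedge\tau}=\widetilde{G}_{t\wedge\tau}$. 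As $G\geq 0$ forces $E^{\widetilde{Q}}[G_{T\wedge\tau}\mid\mathcal{G}_t]\geq 0$ and $\widetilde{G}_{t\wedge\tau}\leq 1$ is a conditional probability, both inequalities of (a) follow. This localization step is the main obstacle, since Proposition \ref{ZandQ}(b) only furnishes a local martingale.

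For the monotonicity in (b) I would first simplify $\widetilde{V}^{(a)}$: merging its integral term with the sum cancels the $-a\Delta D^{o,\mathbb{F}}/\widetilde{G}$ contributions at the jumps, leaving $\widetilde{V}^{(a)}=\int a\widetilde{G}^{-1}\,dD^{o,\mathbb{F},c}+\sum\big(1-(1-\Delta D^{o,\mathbb{F}}/\widetilde{G})^{a}\big)$, where $D^{o,\mathbb{F},c}$ denotes the continuous part. Subtracting this from $\max(a,1)\,\widetilde{G}^{-1}\is D^{o,\mathbb{F}}$, the continuous part carries the nonnegative factor $\max(a,1)-a=(1-a)^{+}$ against the increasing process $\int\widetilde{G}^{-1}\,dD^{o,\mathbb{F},c}$, hence contributes a nondecreasing term; each jump increment equals $\phi_a(x)$ with $x:=\Delta D^{o,\mathbb{F}}/\widetilde{G}\in[0,1]$ and $\phi_a(x):=\max(a,1)\,x-1+(1-x)^{a}$. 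It then remains to verify $\phi_a\geq 0$ on $[0,1]$: for $a\geq 1$, $\phi_a$ is convex with $\phi_a(0)=\phi_a'(0)=0$; for $0<a<1$, $\phi_a$ is concave with $\phi_a(0)=\phi_a(1)=0$; in both cases $\phi_a\geq 0$, which closes the argument.
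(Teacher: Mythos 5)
Your proof is correct, and for assertion (a) it takes a genuinely different route from the paper's. The paper stays under $P$: it combines the Bayes rule for $\widetilde{Q}$, the $\mathbb G$-martingale property of $\widetilde{Z}^{\tau}$, and Lemma \ref{G-projection} to convert the $\widetilde{Q}$-conditional expectation into an $\mathbb F$-conditional one on $\{\tau>t\}$, and then, via the identity $\widetilde{Z}\widetilde{G}=G_0\widetilde{\cal E}_{-}$, reduces (a) to bounding $E\big[\int_t^T (\widetilde{\cal E}_{u-}/\widetilde{\cal E}_t)\, dD^{o,\mathbb F}_u \,\big|\, {\cal F}_t\big]$ by $G_t$, which holds since $\widetilde{\cal E}$ is nonincreasing and $E[D^{o,\mathbb F}_T-D^{o,\mathbb F}_t|{\cal F}_t]=G_t-E[G_T|{\cal F}_t]$ by the martingale property of $m$. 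You instead work directly under $\widetilde{Q}$ via Proposition \ref{ZandQ}-(b), upgrading the $(\widetilde{Q},\mathbb G)$-local martingale $m^{T\wedge\tau}$ to a true martingale by localization together with conditional monotone convergence for the nondecreasing term $D^{o,\mathbb F}$ and conditional dominated convergence for $0\leq G\leq 1$ — this is legitimate (since $\sigma_n\uparrow\infty$, the stopped quantities are eventually constant pathwise), and you rightly flag it as the one delicate point — before concluding from $m_{t\wedge\tau}-D^{o,\mathbb F}_{(t\wedge\tau)-}=\widetilde{G}_{t\wedge\tau}$. Your route buys a shorter, density-free argument valid on all of $\Omega$, which moreover yields the exact identity $E^{\widetilde{Q}}\big[D^{o,\mathbb F}_{T\wedge\tau}-D^{o,\mathbb F}_{(t\wedge\tau)-}\big|{\cal G}_t\big]=\widetilde{G}_{t\wedge\tau}-E^{\widetilde{Q}}[G_{T\wedge\tau}|{\cal G}_t]$, at the cost of the localization step; the paper's computation, carried out on $\{\tau>t\}$, avoids any localization but requires the explicit change-of-density algebra. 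For the first inequality of (b), your use of the integrable-variation $\mathbb G$-martingale $N^{\mathbb G}$ from Theorem \ref{Toperator}-(b), giving $E\big[\int_{t\wedge\tau}^{T\wedge\tau}\widetilde{G}_s^{-1}dD^{o,\mathbb F}_s\big|{\cal G}_t\big]=P(t<\tau\leq T|{\cal G}_t)\leq 1$, is clean and in fact supplies an argument for a step the paper's written proof leaves implicit (its appendix proof passes directly from (a) to the monotonicity claim). The monotonicity part coincides with the paper's argument: the same split of $\max(a,1)\widetilde{G}^{-1}\is D^{o,\mathbb F}-\widetilde{V}^{(a)}$ into a no-jump part carrying the factor $(1-a)^{+}$ and jump increments $\max(a,1)x-1+(1-x)^a$ with $x=\Delta D^{o,\mathbb F}/\widetilde{G}\in[0,1]$; the paper simply quotes the scalar inequality $1-(1-x)^a\leq\max(a,1)x$, which your convexity/concavity verification proves.
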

     The proof of this lemma is relegated to Appendix \ref{Appendix4Proofs}. The following lemma connects the solution of  (\ref{RBSDEG}) --when it exists-- to Snell envelop (the value process of an optimal stopping problem).
     \begin{lemma}\label{Solution2SnellEnvelop}Suppose that the triplet $(f, S, \xi)$ satisfies 
\begin{eqnarray}\label{MainAssumption}
E^{\widetilde{Q}}\left[\vert\xi\vert+\int_0^{T\wedge\tau}\vert f(s)\vert ds+\sup_{0\leq u\leq\tau\wedge T}S_u^+\right]<\infty,
\end{eqnarray}
 and $(Y^{\mathbb G}, Z^{\mathbb G}, M^{\mathbb G}, K^{\mathbb G})$ is a $(\mathbb{G},{\widetilde{Q}})$-solution to (\ref{RBSDEG}). Then for $t\in [0,T]$,
 \begin{eqnarray}\label{RBSDE2Snell}
Y^{\mathbb G}_t=\esssup_{\theta\in \mathcal{J}_{t\wedge\tau}^{T\wedge\tau}(\mathbb{G})}E^{\widetilde{Q}}\left[\int_{t\wedge\tau}^{\theta}f(s)ds + S_{\theta}1_{\{\theta <T\wedge\tau\}}+\xi 1_{\{\theta=T\wedge \tau\}}\ \Big|\ \mathcal{G}_{t}\right].
 \end{eqnarray}
     \end{lemma}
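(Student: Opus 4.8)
The plan is to show that the solution process $Y^{\mathbb{G}}$ of the RBSDE coincides with the Snell envelope appearing on the right-hand side of (\ref{RBSDE2Snell}). The natural route is the classical equivalence between reflected BSDEs and optimal stopping problems, but adapted to the $(\mathbb{G},\widetilde{Q})$-stopped setting. The two directions are: (i) $Y^{\mathbb{G}}_t$ dominates each candidate value $E^{\widetilde{Q}}[\,\cdots\,|\mathcal{G}_t]$ for every $\theta\in\mathcal{J}_{t\wedge\tau}^{T\wedge\tau}(\mathbb{G})$, and (ii) $Y^{\mathbb{G}}_t$ is attained (or approximated) by a specific stopping time, giving the reverse inequality. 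Throughout, the key structural fact I would lean on is that by Proposition \ref{ZandQ}(b), $W^{T\wedge\tau}$ is a $(\widetilde{Q},\mathbb{G})$-Brownian motion, so that $Z^{\mathbb{G}}\is W^{T\wedge\tau}$ together with $M^{\mathbb{G}}$ are genuine $(\widetilde{Q},\mathbb{G})$-local martingales; combined with the integrability hypothesis (\ref{MainAssumption}) this makes the stochastic-integral terms true martingales (or at least controllable via localization), which is exactly what the argument needs.

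\textbf{Direction (i): $Y^{\mathbb{G}}$ dominates.} Fix $t\in[0,T]$ and $\theta\in\mathcal{J}_{t\wedge\tau}^{T\wedge\tau}(\mathbb{G})$. I would integrate the forward dynamics of the RBSDE from $t\wedge\tau$ to $\theta$, obtaining
\begin{equation*}
Y^{\mathbb{G}}_{t\wedge\tau}=Y^{\mathbb{G}}_{\theta}+\int_{t\wedge\tau}^{\theta}f(s)\,ds+\left(K^{\mathbb{G}}_{\theta}-K^{\mathbb{G}}_{t\wedge\tau}\right)-\int_{t\wedge\tau}^{\theta}Z^{\mathbb{G}}_s\,dW_{s\wedge\tau}-\left(M^{\mathbb{G}}_{\theta}-M^{\mathbb{G}}_{t\wedge\tau}\right).
\end{equation*}
Taking $E^{\widetilde{Q}}[\,\cdot\,|\mathcal{G}_t]$, the two stochastic-integral terms vanish (after a standard localization using the $L^p$-bounds, or directly under (\ref{MainAssumption})), the increment $K^{\mathbb{G}}_{\theta}-K^{\mathbb{G}}_{t\wedge\tau}\ge 0$ since $K^{\mathbb{G}}$ is nondecreasing, and the terminal value satisfies $Y^{\mathbb{G}}_{\theta}\ge S_{\theta}$ on $\{\theta<T\wedge\tau\}$ (by the barrier constraint $Y\ge S$ on $\Lbrack 0,T\wedge\tau\Lbrack$) and $Y^{\mathbb{G}}_{\theta}=\xi$ on $\{\theta=T\wedge\tau\}$. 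This yields $Y^{\mathbb{G}}_t\ge E^{\widetilde{Q}}[\int_{t\wedge\tau}^{\theta}f\,ds+S_\theta 1_{\{\theta<T\wedge\tau\}}+\xi 1_{\{\theta=T\wedge\tau\}}\,|\,\mathcal{G}_t]$; taking the essential supremum over $\theta$ gives one inequality.

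\textbf{Direction (ii): optimality is attained.} For the reverse inequality I would introduce, for $\varepsilon>0$, the stopping time
\begin{equation*}
\theta^{*}_t:=\inf\left\{s\ge t\wedge\tau:\ Y^{\mathbb{G}}_s\le S_s+\varepsilon\right\}\wedge(T\wedge\tau),
\end{equation*}
and show that $K^{\mathbb{G}}$ is \emph{flat} on $\Lbrack t\wedge\tau,\theta^{*}_t\Lbrack$, so that the $K^{\mathbb{G}}$-increment drops out of the integrated dynamics. The flatness is exactly where the Skorokhod (minimality) condition $\int_0^{T\wedge\tau}(Y_{s-}-S_{s-})\,dK_s=0$ enters: on the set where $Y^{\mathbb{G}}_{s-}>S_{s-}$ the measure $dK^{\mathbb{G}}$ puts no mass, and before $\theta^*_t$ the gap $Y^{\mathbb{G}}-S$ stays above $\varepsilon$. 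Running the same integration-and-conditioning as in Direction (i) with $\theta=\theta^*_t$ then produces $Y^{\mathbb{G}}_t\le E^{\widetilde{Q}}[\,\cdots\,|\mathcal{G}_t]+(\text{error of order }\varepsilon)$, and letting $\varepsilon\downarrow0$ closes the gap. \textbf{The main obstacle} I anticipate is the careful handling of this minimality/flatness step in the RCLL (jump) setting: one must argue with $Y_{s-}-S_{s-}$ and left limits, control the possible jump of $K^{\mathbb{G}}$ at $\theta^*_t$, and ensure the $\varepsilon$-approximation does not interact badly with the jumps of $S$ or with the right-continuity of $\mathbb{G}$. A secondary technical point is rigorously justifying the vanishing of the martingale terms under $\widetilde{Q}$ for arbitrary $\mathbb{G}$-stopping times $\theta$, which I would secure by a localization sequence reducing $M^{\mathbb{G}}$ and $Z^{\mathbb{G}}\is W^{T\wedge\tau}$ to uniformly integrable martingales, then passing to the limit using (\ref{MainAssumption}) and dominated convergence.
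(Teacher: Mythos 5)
Your two-step scheme is correct in substance, but it is worth noting that the paper does not actually write out this argument: it remarks only that ``the lemma can be proved directly'' and defers to \cite[Corollary 2.9]{Klimsiak2}, where the RBSDE--optimal-stopping equivalence is established on general filtered probability spaces for RCLL barriers. Your proposal is precisely that direct proof, correctly transported to $(\widetilde{Q},\mathbb{G})$ via Proposition \ref{ZandQ}-(b). What the citation buys the paper is that all of the RCLL bookkeeping is done once and for all in \cite{Klimsiak2}; what your route buys is self-containedness. Moreover, your anticipated ``main obstacle'' in fact dissolves, for exactly the reason you half-identify: since the Skorokhod condition in (\ref{RBSDEG}) is formulated with the left limits $Y_{t-}-S_{t-}$, the strict gap $Y^{\mathbb{G}}>S+\varepsilon$ on $\Lbrack t\wedge\tau,\theta^{*}_t\Lbrack$ passes to left limits, giving $Y^{\mathbb{G}}_{s-}-S_{s-}\geq\varepsilon$ for every $s\in\Rbrack t\wedge\tau,\theta^{*}_t\Rbrack$; hence $dK^{\mathbb{G}}$ charges nothing on that interval, \emph{including} a possible jump of $K^{\mathbb{G}}$ at $\theta^{*}_t$, while right-continuity of $Y^{\mathbb{G}}-S$ yields $Y^{\mathbb{G}}_{\theta^{*}_t}\leq S_{\theta^{*}_t}+\varepsilon$ on $\{\theta^{*}_t<T\wedge\tau\}$. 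A cosmetic slip: integrating $dY=-f\,d(t\wedge\tau)-dK-dM+Z\,dW^{\tau}$ gives $Y_{t\wedge\tau}=Y_{\theta}+\int_{t\wedge\tau}^{\theta}f(s)\,ds+(K_{\theta}-K_{t\wedge\tau})+(M_{\theta}-M_{t\wedge\tau})-\int_{t\wedge\tau}^{\theta}Z_s\,dW_{s\wedge\tau}$, so the $M$-increment enters with a plus sign; this is harmless since it is conditionally centered either way.

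There is, however, one step in your write-up that does not go through as claimed: the final limit passage along the localizing sequence. You propose to justify $E^{\widetilde{Q}}[Y^{\mathbb{G}}_{\theta\wedge\sigma_n}\,\vert\,\mathcal{G}_t]\to E^{\widetilde{Q}}[Y^{\mathbb{G}}_{\theta}\,\vert\,\mathcal{G}_t]$ ``using (\ref{MainAssumption}) and dominated convergence'', but (\ref{MainAssumption}) controls only the data $(f,S,\xi)$, not the solution component $Y^{\mathbb{G}}$, and Definition \ref{Definition-RBSDE} makes $M^{\mathbb{G}}$ merely a local martingale; nothing in your hypotheses dominates the family $\{Y^{\mathbb{G}}_{\theta\wedge\sigma_n}\}_n$. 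What is needed is uniform integrability of $\{Y^{\mathbb{G}}_{\rho}:\rho\in\mathcal{J}_{0}^{T\wedge\tau}(\mathbb{G})\}$, i.e.\ that $Y^{\mathbb{G}}$ is of class-$(\mathbb{G},\mathcal{D})$ under $\widetilde{Q}$ --- a hypothesis built into the solution notion of \cite{Klimsiak2}, and automatic in the $L^p$ framework in which the paper actually applies the lemma. You cannot instead bound $Y^{\mathbb{G}}$ by the martingale $\widetilde{M}$ of (\ref{Domination4YG}), since that bound is itself a consequence of (\ref{RBSDE2Snell}) and the argument would be circular. So either add class-(D) of $Y^{\mathbb{G}}$ (or $\sup_{t\leq T\wedge\tau}\vert Y^{\mathbb{G}}_t\vert\in L^1(\widetilde{Q})$) to your standing assumptions, or note explicitly that the identity is asserted only for solutions of class (D); with that amendment your proof is complete.
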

 The lemma can be proved directly, while we prefer to refer the reader to \cite[Corollary 2.9]{Klimsiak2} for other proofs. Below we give the proof of Theorem \ref{EstimatesUnderQtilde}.
\begin{proof}[Proof of Theorem \ref{EstimatesUnderQtilde}] We start by remarking that when $\Delta_{\widetilde{Q}}(\xi,f,S)=\infty$, then obviously the theorem holds. Hence, without loss of generality, for the rest of the proof we assume that $\Delta_{\widetilde{Q}}(\xi,f,S)<\infty$,  and we consider ($Y^{\mathbb{G}},Z^{\mathbb{G}},K^{\mathbb{G}}, M^{\mathbb{G}}$) a $(\mathbb{G},\widetilde{Q})$-solution to (\ref{RBSDEG}). The rest of the proof is divided into two parts.\\
{\bf Part 1.} Hereto, we control $\Vert{Y }^{\mathbb{G}}\Vert_{\mathbb{D}_{T\wedge\tau}(\widetilde{Q},p)}$, and  we estimate $\Vert{K}^{\mathbb{G}}_{T\wedge\tau}\Vert_{L^p(\widetilde{Q})} $ using $\Vert\vert (M^{\mathbb{G}},Z^{\mathbb{G}})\Vert\vert_{(\widetilde{Q},{\mathbb{G}},p)}$ (see (\ref{Norm4(Y,Z,K,M)}) for its definition) afterwards. Thanks to Lemma \ref{Solution2SnellEnvelop}, we conclude that $Y^{\mathbb G}$ satisfies (\ref{RBSDE2Snell}), i.e. 
    \begin{eqnarray*}
  Y_{t}^{\mathbb{G}}=\esssup_{\theta\in \mathcal{J}_{t\wedge\tau}^{T\wedge\tau}(\mathbb{G})}\hspace{2mm}E^{\widetilde{Q}}\left[\int_{t\wedge\tau}^{\theta}f(s)ds + S_{\theta}1_{\{\theta\ <T\wedge\tau\}}+\xi 1_{\{\theta =T\wedge \tau\}}\ \Big|\ \mathcal{G}_{t}\right].
  \end{eqnarray*}
By taking $\theta=T\wedge\tau\in \mathcal{J}_{t\wedge\tau}^{T\wedge\tau}(\mathbb G)$ and using $-\xi^-\leq \xi\leq \xi^+$ and $-(f(s))^- \leq f(s)\leq (f(s))^+$, we get
 \begin{eqnarray}\label{Domination4YG}
 \vert Y_{t}^{\mathbb{G}}\vert\leq  {\widetilde M}_t:=E^{\widetilde{Q}}\left[\int_0^{T\wedge\tau}\vert f(s)\vert ds + \sup_{0\leq u\leq\tau\wedge T} S_u^+ +\vert\xi\vert\ \Big|\ \mathcal{G}_{t}\right].
  \end{eqnarray}
On the one hand, by applying Doob's inequality to $\widetilde M$ under $(\widetilde Q, \mathbb G)$ we get 
\begin{equation}\label{yesyes}
\Vert{Y }^{\mathbb{G}}\Vert_{\mathbb{D}_{T\wedge\tau}(\widetilde{Q},p)}\leq \Vert\widetilde{M}\Vert_{\mathbb{D}_{T\wedge\tau}(\widetilde{Q},p)}\leq{C}_{DB} \Delta_{\widetilde{Q}}(\xi,f,S^+),
\end{equation}
where $C_{DB}$ is the universal Doob's constant. On the other hand, by combining $ K_{T\wedge\tau}^{\mathbb{G}}=Y^{\mathbb{G}}_{0}-\xi  +\int_{0}^{T\wedge\tau}f(t)dt- M^{\mathbb{G}}_{T\wedge\tau}+  \int_{0}^{T\wedge\tau}Z^{\mathbb{G}}_{s}dW_{t}$,  (\ref{Domination4YG}) for $t=0$, and the Burkholder-Davis-Gunndy (BDG for short) inequalities for the $(\widetilde{Q}, \mathbb G)$-martingales $M^{\mathbb{G}}$ and $ Z^{\mathbb{G}}\is{W}^{\tau}$, we obtain
       \begin{equation}\label{Control4KG}\Vert{K}^{\mathbb{G}}_{T\wedge\tau}\Vert_{L^p(\widetilde{Q})}  \leq 2\Delta_{\widetilde{Q}}(\xi,f,S^+)+C_{BDG}\Vert\vert (M^{\mathbb{G}},Z^{\mathbb{G}})\Vert\vert_{(\widetilde{Q},{\mathbb{G}},p)}.\end{equation}
  Here $C_{BDG}$ is the universal BDG constant. \\
{\bf Part 2.} Herein, we control $ \Vert\vert (M^{\mathbb{G}},Z^{\mathbb{G}})\Vert\vert_{(\widetilde{Q},{\mathbb{G}},p)}$ using $\Vert{Y }^{\mathbb{G}}\Vert_{\mathbb{D}_{T\wedge\tau}(\widetilde{Q},p)}$. To this end, we combine It\^o applied to $(Y^{\mathbb G})^2$ with (\ref{RBSDEG}), and derive 
\begin{align*}
d(Y^{\mathbb G})^2&=2Y_{-}^{{\mathbb{G}}}dY^{\mathbb G}+d[ Y^{\mathbb G},Y^{\mathbb G}]\nonumber\\
&=-2Y_{-}^{\mathbb G}f(\cdot)d(s\wedge\tau)-2Y_{-}^{\mathbb G}dK^{\mathbb G}+2Y_{-}^{\mathbb G} Z^{\mathbb G}dW^{\tau}-2Y_{-}^{\mathbb G}dM^{\mathbb G}\nonumber\\
&+d[M^{\mathbb G},M^{\mathbb G}]+d[K^{\mathbb G},K^{\mathbb G}]+(Z^{\mathbb G})^2d(s\wedge\tau)+2d[{K}^{\mathbb G},{M}^{\mathbb G}].
\end{align*}
As $[ M^{\mathbb{G}}, K^{\mathbb{G}}]=\Delta{K}^{\mathbb{G}}\is{M}^{\mathbb{G}}$, the above equality yields 
\begin{align}
&[M^{\mathbb G},M^{\mathbb G}]_{\tau\wedge{T}}+\int_0^{\tau\wedge{T}}(Z^{\mathbb G}_s)^2ds\nonumber\\
&\leq (1+{1\over{\epsilon}})\sup_{0\leq s\leq\tau\wedge{T}}\vert Y^{\mathbb G}_s\vert^2+(\xi)^2+\left(\int_0^{\tau\wedge{T}}\vert f(s)\vert ds\right)^2+\epsilon({K}^{\mathbb G}_{\tau\wedge{T}})^2\label{Ito2}\\
&+2\sup_{0\leq s\leq T\wedge\tau}\vert (\Delta{K}^{\mathbb G}\is{M}^{\mathbb G})_s\vert+2\sup_{0\leq s\leq\tau\wedge{T}}\vert (Y_{-}^{\mathbb G} \is (Z^{\mathbb G}\is{W}^{\tau}-{M}^{\mathbb G}))_s\vert.\nonumber
\end{align}
Furthermore, thanks to $-\Delta{Y}^{\mathbb G}=\Delta{K}^{\mathbb G}+\Delta{M}^{\mathbb G}$, we remark that 
\begin{eqnarray*}
\vert \Delta K^{\mathbb G}\vert\leq 2\overline{M}_{-},\quad\mbox{where}\quad\overline{M}_s:= E^{\widetilde{Q}}[\sup_{0\leq t\leq T\wedge\tau}\vert Y^{{\mathbb{G}}}_t\vert\ \big|{\cal G}_s].\end{eqnarray*}
Therefore, by combining this inequality, Lemma \ref{Lemma4.8FromChoulliThesis} applied to the last two terms in the right-hand-side of (\ref{Ito2}) with $a=b=p$, and Doob's inequality applied to $\overline{M}$, we get 
\begin{align*}
&\sqrt{2}\Vert\sqrt{\vert{Y}_{-}^{\mathbb G} \is (Z^{\mathbb G}\is{W}^{\tau}-{M}^{\mathbb G})\vert}\Vert_{\mathbb{D}_{T\wedge\tau}(\widetilde{Q},p)}+\sqrt{2}\Vert\sqrt{\vert \Delta{K}^{\mathbb G}\is{M}^{\mathbb G}\vert}\Vert_{\mathbb{D}_{T\wedge\tau}(\widetilde{Q},p)}\nonumber\\
&\leq2\kappa\sqrt{\Vert\overline{M}\Vert_{\mathbb{D}_{T\wedge\tau}(\widetilde{Q},p)}\Vert{M}^{\mathbb G}\Vert_{{\cal{M}}^p(\widetilde{Q})}}+2\kappa\sqrt{\Vert{Y}^{\mathbb{G}}\Vert_{\mathbb{D}_{T\wedge\tau}(\widetilde{Q},p)}\Vert\vert (M^{\mathbb{G}},Z^{\mathbb{G}})\Vert\vert_{(\widetilde{Q},{\mathbb{G}},p)}}\nonumber\\
&\leq {{\kappa^2(1+\sqrt{C_{DB}})^2}\over{\epsilon}} \Vert{Y}^{\mathbb{G}}\Vert_{\mathbb{D}_{T\wedge\tau}(\widetilde{Q},p)}+\epsilon \Vert\vert (M^{\mathbb{G}},Z^{\mathbb{G}})\Vert\vert_{(\widetilde{Q},{\mathbb{G}},p)}.\end{align*} 
The last inequality is due to Young's inequality for any $\epsilon\in (0,1)$. Thus, by inserting the above inequality and (\ref{Control4KG}) in (\ref{Ito2}) and using $\Vert \sqrt{X+Y}\Vert_{L^p(\widetilde{Q})}\geq (\Vert\sqrt{X}\Vert_{L^p(\widetilde{Q})}+\Vert\sqrt{Y}\Vert_{L^p(\widetilde{Q})})/2$, which holds for any nonnegative random variables $X$ and $Y$, we obtain 
\begin{align*}
&({1\over{2}}-\epsilon-\sqrt{\epsilon}C_{BDG})\Vert\vert (M^{\mathbb{G}},Z^{\mathbb{G}})\Vert\vert_{(\widetilde{Q},{\mathbb{G}},p)}\\
&\leq\left\{{{\kappa^2(1+\sqrt{C_{DB}})^2}\over{\epsilon}} +\sqrt{1+{1\over{\epsilon}}}\right\}\Vert{Y}^{\mathbb{G}}\Vert_{\mathbb{D}_{T\wedge\tau}(\widetilde{Q},p)}+(1+2\sqrt{\epsilon})\Delta_{\widetilde{Q}}(\xi,f,S^+).
\end{align*}
Finally, by  taking $\epsilon$ satisfying $(1/2)-\epsilon-\sqrt{\epsilon}C_{BDG}>0$ and by combining the above inequality with (\ref{yesyes}) and (\ref{Control4KG}), the proof of the theorem follows.     \end{proof} 
We end this subsection, by providing priori estimates for the difference of solutions as follows. 
 \begin{theorem}\label{EstimatesUnderQtilde1} Suppose that ($Y^{\mathbb{G},i},Z^{\mathbb{G},i},K^{\mathbb{G},i}, M^{\mathbb{G},i}$)  is solution to the RBSDE (\ref{RBSDEG}) which corresponds to  $(f^{(i)}, S^{(i)}, \xi^{(i)})$, for each $i=1,2$. Then for any $p>1$, there exist positive $C_1$ and $C_2$ that depend on $p$ only such that
  \begin{eqnarray}\label{estimate1001}
  &&\hskip -1.5cm\Vert\delta{Y}^{\mathbb{G}}\Vert_{\mathbb{D}_{T\wedge\tau}(\widetilde{Q},p)}+\Vert\delta{Z}^{\mathbb{G}}\Vert_{\mathbb{S}_{T\wedge\tau}(\widetilde{Q},p)}+\Vert\delta{M}^{\mathbb{G}}\Vert_{{\cal{M}}^p(\widetilde{Q})}\nonumber\\
    &&\hskip -1.5cm\leq{C_1}\Delta_{\widetilde{Q}}(\delta\xi,\delta f,\delta S)+C_2\sqrt{\Vert\delta S\Vert_{\mathbb{D}_{T\wedge\tau}(\widetilde{Q},p)}}\sqrt{\sum_{i=1}^2\Delta_{\widetilde{Q}}(\xi^{(i)},f^{(i)},(S^{(i)})^+)}
 ,\end{eqnarray}
  where  $\Delta_{\widetilde{Q}}(\xi^{(i)},f^{(i)},(S^{(i)})^+)$ for $i=1,2$ and $\Delta_{\widetilde{Q}}(\delta\xi,\delta f,\delta S)$ are defined via (\ref{Delta(xi,f,S)}), and $ \delta Y^{\mathbb{G}}$, $\delta Z^{\mathbb{G}}$, $\delta K^{\mathbb{G}},\delta M^{\mathbb{G}},\delta f,\delta \xi,$ and $\delta S$ are given by
  \begin{eqnarray}\label{Delta4Solution}
  \begin{split}
&  \delta Y^{\mathbb{G}}:=Y^{\mathbb{G},1}-Y^{\mathbb{G},2},\quad \delta Z^{{\mathbb{G}}}:=Z^{\mathbb{G},1}-Z^{\mathbb{G},2},\quad \delta M^{{\mathbb{G}}}:=M^{\mathbb{G},1}-M^{\mathbb{G},2},\\&\delta K^{{\mathbb{G}}}:=K^{\mathbb{G},1}-K^{\mathbb{G},2},\quad   \delta f:=f^{(1)}-f^{(2)},\quad \delta\xi:=\xi^{(1)}-\xi^{(2)},\quad  \delta S:=S^{(1)}-S^{(2)}.\end{split}
\end{eqnarray}
\end{theorem}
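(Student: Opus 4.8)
The plan is to derive the RBSDE satisfied by the differences $(\delta Y^{\mathbb{G}},\delta Z^{\mathbb{G}},\delta M^{\mathbb{G}},\delta K^{\mathbb{G}})$ and then run the very same energy/BDG scheme used in the proof of Theorem \ref{EstimatesUnderQtilde}, the only genuinely new ingredient being the control of the \emph{non-monotone} reflecting term $\delta K^{\mathbb{G}}$ through the two Skorokhod conditions. Subtracting the two copies of (\ref{RBSDEG}) gives
\[
d\delta Y^{\mathbb{G}}_t=-\delta f(t)\,d(t\wedge\tau)-d(\delta K^{\mathbb{G}}_t+\delta M^{\mathbb{G}}_t)+\delta Z^{\mathbb{G}}_t\,dW_{t\wedge\tau},\qquad \delta Y^{\mathbb{G}}_{T\wedge\tau}=\delta\xi .
\]
Note that $\delta K^{\mathbb{G}}=K^{\mathbb{G},1}-K^{\mathbb{G},2}$ is a difference of increasing processes, hence not monotone, which is exactly what makes the reflected case delicate.

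First I would control $\Vert\delta Y^{\mathbb{G}}\Vert_{\mathbb{D}_{T\wedge\tau}(\widetilde{Q},p)}$. By Lemma \ref{Solution2SnellEnvelop} each $Y^{\mathbb{G},i}$ is the Snell envelope (\ref{RBSDE2Snell}) associated to $(f^{(i)},S^{(i)},\xi^{(i)})$; using the elementary inequality $\esssup_\theta a_\theta-\esssup_\theta b_\theta\leq\esssup_\theta(a_\theta-b_\theta)$ together with its symmetric counterpart, I obtain the pointwise bound $\vert\delta Y^{\mathbb{G}}_t\vert\leq E^{\widetilde{Q}}\big[\int_0^{T\wedge\tau}\vert\delta f\vert ds+\sup_u\vert\delta S_u\vert+\vert\delta\xi\vert\,\big|\,{\cal G}_t\big]$. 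Applying Doob's inequality under $(\widetilde{Q},\mathbb{G})$ to the martingale on the right yields $\Vert\delta Y^{\mathbb{G}}\Vert_{\mathbb{D}_{T\wedge\tau}(\widetilde{Q},p)}\leq C_{DB}\,\Delta_{\widetilde{Q}}(\delta\xi,\delta f,\delta S)$, which already produces the first term on the right of (\ref{estimate1001}).

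Next, applying It\^o to $(\delta Y^{\mathbb{G}})^2$ exactly as in the proof of Theorem \ref{EstimatesUnderQtilde} — using the orthogonality of $\delta M^{\mathbb{G}}$ to $W^\tau$ and $[\delta M^{\mathbb{G}},\delta K^{\mathbb{G}}]=\Delta\delta K^{\mathbb{G}}\is\delta M^{\mathbb{G}}$ — isolates $[\delta M^{\mathbb{G}}]_{T\wedge\tau}+\int_0^{T\wedge\tau}(\delta Z^{\mathbb{G}}_s)^2ds$ on the left, the right containing $(\delta\xi)^2$, a term linear in $\delta f$, the two stochastic integrals handled by BDG and Lemma \ref{Lemma4.8FromChoulliThesis}, and the reflecting term $2\int\delta Y^{\mathbb{G}}_-\,d\delta K^{\mathbb{G}}$. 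The decisive step is to write $\delta Y^{\mathbb{G}}_-=(Y^{\mathbb{G},1}_--S^{(1)}_-)-(Y^{\mathbb{G},2}_--S^{(2)}_-)+\delta S_-$ and invoke the two Skorokhod conditions $\int(Y^{\mathbb{G},i}_--S^{(i)}_-)dK^{\mathbb{G},i}=0$ together with $Y^{\mathbb{G},i}\geq S^{(i)}$ and $dK^{\mathbb{G},i}\geq0$: the two mixed integrals are then forced to be nonpositive, leaving $\int\delta Y^{\mathbb{G}}_-\,d\delta K^{\mathbb{G}}\leq\int\delta S_-\,d\delta K^{\mathbb{G}}\leq\sup_s\vert\delta S_s\vert\big(K^{\mathbb{G},1}_{T\wedge\tau}+K^{\mathbb{G},2}_{T\wedge\tau}\big)$, where I have used $\mathrm{Var}_{T\wedge\tau}(\delta K^{\mathbb{G}})\leq K^{\mathbb{G},1}_{T\wedge\tau}+K^{\mathbb{G},2}_{T\wedge\tau}$.

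Finally I would take $\Vert\cdot\Vert_{L^{p/2}(\widetilde{Q})}$-norms in the It\^o estimate, use $\Vert\sqrt{X+Y}\Vert_{L^p}\geq(\Vert\sqrt{X}\Vert_{L^p}+\Vert\sqrt{Y}\Vert_{L^p})/2$ to recover $\vert\Vert(\delta M^{\mathbb{G}},\delta Z^{\mathbb{G}})\Vert\vert_{(\widetilde{Q},\mathbb{G},p)}$, and absorb the $\epsilon$-weighted martingale and $\delta f$ contributions by Young's inequality, precisely as before (the already-bounded $\Vert\delta Y^{\mathbb{G}}\Vert_{\mathbb{D}}$ keeps them linear in the differences and hence feeds into $C_1\Delta_{\widetilde{Q}}(\delta\xi,\delta f,\delta S)$). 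The new reflecting term is estimated by Cauchy--Schwarz, $\Vert\sup\vert\delta S\vert(K^{\mathbb{G},1}+K^{\mathbb{G},2})\Vert_{L^{p/2}}\leq\Vert\delta S\Vert_{\mathbb{D}_{T\wedge\tau}(\widetilde{Q},p)}\sum_i\Vert K^{\mathbb{G},i}_{T\wedge\tau}\Vert_{L^p(\widetilde{Q})}$, and each $\Vert K^{\mathbb{G},i}_{T\wedge\tau}\Vert_{L^p}$ is bounded by $C\,\Delta_{\widetilde{Q}}(\xi^{(i)},f^{(i)},(S^{(i)})^+)$ via Theorem \ref{EstimatesUnderQtilde}; taking the square root of this product is exactly what generates the factor $\sqrt{\Vert\delta S\Vert_{\mathbb{D}_{T\wedge\tau}(\widetilde{Q},p)}}\,\sqrt{\sum_i\Delta_{\widetilde{Q}}(\xi^{(i)},f^{(i)},(S^{(i)})^+)}$, the second term in (\ref{estimate1001}). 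I expect the main obstacle to be precisely this reflecting term: since $\delta K^{\mathbb{G}}$ is not monotone, the sign that annihilates $\int\delta Y_-\,d\delta K$ in the unreflected case is lost, and one must extract nonpositivity from the two Skorokhod conditions \emph{simultaneously}, paying for it with the unavoidable quadratic cross-term whose square-root structure dictates the form of the estimate.
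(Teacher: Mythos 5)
Your proposal is correct and follows essentially the same route as the paper's own proof: the Snell-envelope comparison with Doob's inequality for $\delta Y^{\mathbb G}$, It\^o applied to $(\delta Y^{\mathbb G})^2$ with the stochastic integrals and jump term controlled via Lemma \ref{Lemma4.8FromChoulliThesis}, and your sign analysis of the two Skorokhod conditions is exactly the paper's observation that $(\delta Y^{\mathbb G}_{-}-\delta S_{-})\is\delta K^{\mathbb G}$ is non-increasing. Your final step---$\mathrm{Var}(\delta K^{\mathbb G})\leq K^{\mathbb G,1}+K^{\mathbb G,2}$, Cauchy--Schwarz, and Theorem \ref{EstimatesUnderQtilde} applied to each $K^{\mathbb G,i}$---is precisely how the paper generates the square-root term in (\ref{estimate1001}).
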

\begin{proof} This proof is achieved in two  parts.\\
{\bf {Part 1.}} This part controls the first term in the left-hand-side of (\ref{estimate1001}). By virtue of Lemma \ref{Solution2SnellEnvelop} and 
\begin{equation}\label{InequalityEssentialSup}
\vert\esssup_{i\in{I}}X_i-\esssup_{i\in{I}}Y_i\vert\leq\esssup_{i\in{I}}\vert{X}_i-Y_i\vert,
\end{equation} which holds for any pair of families $\{(X_i,Y_i):\ i\in{I}\}$, we derive 
    \begin{align*}
  \vert\delta  Y_{t}^{\mathbb{G}}\vert&=\vert{Y}^{\mathbb{G},1}_t-Y^{\mathbb{G},2}_t\vert\\
  & \leq \esssup_{\theta\in \mathcal{J}_{t\wedge\tau}^{T\wedge\tau}(\mathbb{G})}\left\vert{E}^{\widetilde{Q}}\left[\int_{t\wedge\tau}^{\theta}  \delta f(s)ds +  \delta S_{\theta}1_{\{\theta\ <T\wedge\tau\}}+  \delta \xi 1_{\{\theta =T\wedge \tau\}}\ \Big|\ \mathcal{G}_{t}\right]\right\vert\\
   &\leq{E}^{\widetilde{Q}}\left[\int_{0}^{T\wedge\tau} \vert \delta f(s)\vert ds +  \sup_{0\leq s\leq T\wedge\tau}\vert\delta S_{s}\vert+ \vert \delta \xi \vert \Big|\ \mathcal{G}_{t}\right]=: {\widehat{M}}_t.
  \end{align*}
By applying Doob's inequality to $\widehat{M}$ under $(\widetilde Q, \mathbb G)$, we get 
\begin{equation}\label{yesyes1}
\Vert\delta Y^{{\mathbb{G}}}\Vert_{\mathbb{D}_{T\wedge\tau}(\widetilde{Q},p)}\leq C_{DB}\Delta_{\widetilde{Q}}(\delta\xi ,\delta{f},\delta S),
\end{equation}
where $C_{DB}$ is the universal Doob's constant that depends on $p$ only.\\
{\bf Part 2.} Hereto, we focus on $\Vert\vert (\delta{M}^{\mathbb{G}},\delta{Z}^{\mathbb{G}})\Vert\vert_{(\widetilde{Q},\mathbb{G},p)}$. To this end, we put
\begin{align*}
{\cal Q}^{\mathbb G}&:= [\delta{M}^{\mathbb G},\delta{M}^{\mathbb G}]+\int_0^{\cdot} (\delta{Z}^{\mathbb G}_s)^2d(s\wedge\tau),\quad\mbox{and}\\
\Gamma^{\mathbb G}&:=2\displaystyle\sup_{0\leq{t}\leq\cdot}\vert (\delta{Y}_{-}^{\mathbb G}\is(\delta{Z}^{\mathbb G}\is{W}^{\tau}-\delta{M}^{\mathbb G}))_t\vert+2\sup_{0\leq{t}\leq\cdot}\vert(\Delta(\delta{K}^{\mathbb G})\is\delta{M})_t\vert.
\end{align*}
and apply It\^o to $(\delta{Y}^{\mathbb G})^2$ to get 
  \begin{align}
{\cal Q}^{\mathbb G}&\leq (\delta{Y}^{\mathbb G})^2+2\int_{0}^{\cdot}\delta  Y_{s-}^{\mathbb{G}}\delta  f(s)d{s}+2\delta  Y_{-}^{\mathbb{G}}\is\delta{K}^{\mathbb{G}}+\Gamma^{\mathbb G},\nonumber\\
 & \leq 2\sup_{0\leq{t}\leq\cdot}(\delta{Y}^{\mathbb G}_t)^2+\left(\int_{0}^{\cdot}\vert\delta  f(s)\vert{d}{s}\right)^2+2\delta  S_{-}\is\delta{K}^{\mathbb{G}}+ \Gamma^{\mathbb G}.\label{Ito10}
 \end{align}
 The last inequality follows from Skorokhod's condition (i.e. $(\delta{Y}_{-}^{\mathbb{G}}-\delta  S_{-})\is\delta{K}$ is non-increasing) and Young's inequality. Furthermore, thanks to  (\ref{RBSDEG}), we deduce that 
 \begin{eqnarray*}
 \vert\Delta(\delta{K}^{\mathbb{G}})\vert\leq \widehat{N}_{-},\quad\mbox{where}\quad \widehat{N}_t:=2E^{\widetilde{Q}}[\sup_{0\leq{s}\leq{T}\wedge\tau}\vert\delta{Y}_{s}^{\mathbb{G}}\vert\ \big|{\cal G}_t].
 \end{eqnarray*}
 Thus, by combining this inequality and Lemma \ref{Lemma4.8FromChoulliThesis} applied to $\Gamma^{\mathbb G}$ with $a=b=p$, and using Doob's inequality for $\widehat{N}$ afterwards, we derive 
  \begin{equation}\label{GammaGControl}
  \begin{split}
 \Vert\sqrt{\Gamma^{\mathbb G}_{\tau\wedge{T}}}\Vert_{L^p(\widetilde{Q})} \leq {{\kappa^2(1+(1+\sqrt{C_{DB}})^2)}\over{\epsilon}}\Vert\delta Y^{\mathbb{G}}\Vert_{\mathbb{D}_{T\wedge\tau}(\widetilde{Q},p)}+\epsilon \Vert\vert (\delta{M}^{\mathbb{G}},\delta{Z}^{\mathbb{G}})\Vert\vert_{(\widetilde{Q},{\mathbb{G}},p)},\end{split}
  \end{equation}
  and 
   \begin{align*}
   {1\over{2}} \Vert\vert (\delta{M}^{\mathbb{G}},\delta{Z}^{\mathbb{G}})\Vert\vert_{(\widetilde{Q},{\mathbb{G}},p)}&\leq \Vert\sqrt{{\cal Q}_{T\wedge\tau}^{\mathbb G}}\Vert_{L^p(\widetilde{Q})}\\
  &\leq \sqrt{2}\Vert\delta Y^{\mathbb{G}}\Vert_{\mathbb{D}_{T\wedge\tau}(\widetilde{Q},p)}+\Delta_{\widetilde{Q}}(\delta\xi,\delta{f},\delta{S})\\
  &\quad+\sqrt{2}\Vert\delta{S}\Vert_{\mathbb{D}_{T\wedge\tau}(\widetilde{Q},p)}^{1/2}\Vert\delta{K}^{\mathbb G}\Vert_{{\cal{A}}_{T\wedge\tau}(\widetilde{Q},p)}^{1/2}+ \Vert\sqrt{\Gamma^{\mathbb G}_{\tau\wedge{T}}}\Vert_{L^p(\widetilde{Q})}.
   \end{align*}
 Thus, by combining the latter inequality with (\ref{GammaGControl}), (\ref{yesyes1}), the fact that  $\mbox{Var}(\delta{K}^{\mathbb G})\leq {K}^{\mathbb G,1}+{K}^{\mathbb G,2}$, and Theorem \ref{EstimatesUnderQtilde} applied to each ${K}^{\mathbb G, i}$, $i=1,2$, the proof of the theorem follows with $\epsilon\in(0,0.5)$ and 
 $$C_1=C_{DB}+ {{\epsilon+\epsilon\sqrt{2}C_{DB}+C_{DB}\kappa^2(1+(1+\sqrt{C_{DB}})^2)}\over{\epsilon(0.5-\epsilon)}},\ C_2:={{\sqrt{2C}}\over{0.5-\epsilon}}.$$ Here $C$ is the universal constant of Theorem \ref{EstimatesUnderQtilde}. This completes the proof of the theorem.
     \end{proof} 
\subsection{Existence for $\mathbb G$-RBSDE and relationship to $\mathbb F$-RBSDE}\label{Subsection4.1}
In this subsection, we prove the existence and the uniqueness of the solution to the RBSDE (\ref{RBSDEG}), we establish explicit connection between this RBSDE and its $\mathbb F$-RBSDE counterpart, and we highlight the explicit relationship between their solutions as well. 

\begin{theorem}\label{abcde}Suppose that (\ref{MainAssumption}) holds, and let ${\widetilde{\cal E}}$ be defined in (\ref{EpsilonTilde}). Consider the processes $(f^{\mathbb{F}},S^{\mathbb{F}},\xi^{\mathbb{F}})$ and $V^{\mathbb F}$  given by 
  \begin{eqnarray}
  (f^{\mathbb{F}}, S^{\mathbb{F}},  \xi^{\mathbb{F}}):=({\widetilde{\cal E}}f,{\widetilde{\cal E}}S,{\widetilde{\cal E}_T}h_{T}),\quad\rm{and}\quad V^{\mathbb F}:=1-{\widetilde{\cal E}}. \label{ProcessVFandXiF}
  \end{eqnarray}  
Then the following  assertions hold.\\
{\rm{(a)}} The following RBSDE under $\mathbb F$, associated to the triplet $ \left(f^{\mathbb{F}},S^{\mathbb{F}}, \xi^{\mathbb F}\right)$,
\begin{equation}\label{RBSDEF}
\begin{split}
&Y_{t}= \displaystyle\xi^{\mathbb{F}}+\int_{t}^{T}f^{\mathbb{F}}(s)ds+\int_{t}^{T}h_{s}dV^{\mathbb{F}}_{s}+K_{T}-K_{t}-\int_{t}^{T}Z_{s}dW_{s},\\
&Y_{t}\geq S_{t}^{\mathbb{F}}1_{\{t\ <T\}}+\xi^{\mathbb{F}}1_{\{t\ =T\}},\ 
 \displaystyle\int_{0}^{T}(Y_{t-}-S_{t-}^{\mathbb{F}})dK_{t}=0 ,\ P\mbox{-a.s.,}
\end{split}
\end{equation}
has a unique solution $(Y^{\mathbb F},  Z^{\mathbb F}, K^{\mathbb F})$ satisfying
 \begin{eqnarray}\label{RBSDE2SnellF}
 Y^{\mathbb F}_t=\esssup_{\sigma\in \mathcal{J}_{t}^{T}(\mathbb{F})}E\left[\int_{t}^{\sigma}f^{\mathbb F}_s ds+\int_{t}^{\sigma}h_s dV^{\mathbb F}_s + S_{\sigma}^{\mathbb F}1_{\{\sigma <T\}}+\xi^{\mathbb F} I_{\{\sigma =T\}}\ \Big|\ \mathcal{F}_{t}\right].
 \end{eqnarray}
{\rm{(b)}} The RBSDE (\ref{RBSDEG}) has a unique solution $(Y^{\mathbb{G}},Z^{\mathbb{G}},K^{\mathbb{G}},M^{\mathbb{G}})$ given by 
\begin{equation}  \label{secondrelation}
 \begin{split}  
 Y^{\mathbb{G}}&= \displaystyle\frac{Y^{\mathbb{F}}}{\widetilde{\cal E}}I_{\Lbrack0,T\wedge\tau\Lbrack}+\xi{I_{\Lbrack{T}\wedge\tau,+\infty\Lbrack}},\ 
  Z^{\mathbb{G}}=\frac{Z^{\mathbb{F}}}{{\widetilde{\cal E}}_{-}} I_{\Rbrack0,T\wedge\tau\Rbrack},\\
   K^{\mathbb{G}}&=\displaystyle\frac{1}{{\widetilde{\cal E}}_{-}}\is (K ^{\mathbb{F}})^{\tau}\ \mbox{and}\ 
      M^{\mathbb{G}}=\left(h-\frac{Y^{\mathbb{F}}}{{\widetilde{\cal E}}}\right)\is({N}^{\mathbb{G}})^T.\end{split}
       \end{equation}
{\rm{(c)}}  Let $p\in (1,\infty)$ and suppose that the triplet $(f, S, \xi)$ satisfies 
\begin{eqnarray}\label{MainAssumptionL(p)}
\norm{\int_0^{T\wedge\tau}\vert f(s)\vert ds+\vert\xi\vert+\sup_{0\leq u\leq\tau\wedge T}S_u^+}_{L^p(\widetilde{Q})}<\infty.
\end{eqnarray}
Then (\ref{RBSDEG}) has a unique  $L^p(\widetilde{Q},\mathbb G)$-solution $(Y^{\mathbb{G}},Z^{\mathbb{G}},K^{\mathbb{G}},M^{\mathbb{G}})$, (\ref{RBSDEF}) has a unique $L^p(P,\mathbb F)$-solution  $(Y^{\mathbb F},  Z^{\mathbb F}, K^{\mathbb F})$, and (\ref{secondrelation}) holds.
        \end{theorem}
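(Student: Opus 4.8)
The plan is to treat the three assertions in order, deriving (b) and (c) from the classical $\mathbb F$-theory established in (a). For (a), the first task is to convert the $\widetilde Q$-integrability hypothesis (\ref{MainAssumption}) into $P$-integrability of the $\mathbb F$-data $(f^{\mathbb F},S^{\mathbb F},\xi^{\mathbb F})$. Here the multiplicative decomposition $G=G_0{\cal E}(G_-^{-1}\is m)\widetilde{\cal E}$ of Lemma \ref{Decomposition4G}, together with $\widetilde Z=1/{\cal E}(G_-^{-1}\is m)$ from Proposition \ref{ZandQ}, yields the identity $\widetilde Z=G_0\widetilde{\cal E}/G$, so that the density $\widetilde Z_{T\wedge\tau}$ of $\widetilde Q_T$ is explicitly $G_0\widetilde{\cal E}_{T\wedge\tau}/G_{T\wedge\tau}$. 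Combining this with Lemma \ref{G-projection} lets me rewrite any $\widetilde Q$-expectation of an $\mathbb F$-optional payoff stopped at $\tau$ as a $P$-expectation of the corresponding $\mathbb F$-quantity weighted by $\widetilde{\cal E}$; applied to (\ref{MainAssumption}) this delivers exactly the $P$-integrability of $\int_0^T|f^{\mathbb F}|ds$, $\sup_t (S^{\mathbb F}_t)^+$ and $\xi^{\mathbb F}$ that the classical existence theory requires. Since $\mathbb F$ is Brownian, $S^{\mathbb F}=\widetilde{\cal E}S$ is RCLL and the extra driver $\int h\,dV^{\mathbb F}$ has finite variation, the existence and uniqueness of $(Y^{\mathbb F},Z^{\mathbb F},K^{\mathbb F})$ together with the Snell representation (\ref{RBSDE2SnellF}) follow from the standard $L^1$-theory of reflected BSDEs with an RCLL barrier (the same circle of results invoked for Lemma \ref{Solution2SnellEnvelop}).

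The core of the proof is assertion (b), for which I would first establish the identification $Y^{\mathbb G}_t=Y^{\mathbb F}_t/\widetilde{\cal E}_t$ on $\{t<\tau\}$ at the level of optimal stopping. Any $(\mathbb G,\widetilde Q)$-solution satisfies the Snell representation (\ref{RBSDE2Snell}) of Lemma \ref{Solution2SnellEnvelop}; using the standard fact that every $\theta\in{\cal J}_{t\wedge\tau}^{T\wedge\tau}(\mathbb G)$ agrees with $\sigma\wedge\tau$ for some $\sigma\in{\cal J}_t^T(\mathbb F)$, the change of measure $\widetilde Z=G_0\widetilde{\cal E}/G$ and Lemma \ref{G-projection} convert the $(\widetilde Q,\mathbb G)$-conditional payoff into a $(P,\mathbb F)$-conditional payoff divided by $\widetilde{\cal E}_t$, whose essential supremum is precisely $Y^{\mathbb F}_t/\widetilde{\cal E}_t$ by (\ref{RBSDE2SnellF}). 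This simultaneously proves uniqueness of $Y^{\mathbb G}$ and pins down its value. The remaining components are obtained by verification: defining $(Y^{\mathbb G},Z^{\mathbb G},K^{\mathbb G},M^{\mathbb G})$ through (\ref{secondrelation}), I would apply integration by parts to $Y^{\mathbb F}/\widetilde{\cal E}$ on $\Lbrack0,T\wedge\tau\Lbrack$ and use Theorem \ref{Toperator} — in particular the $\mathbb G$-martingale $N^{\mathbb G}$ of (\ref{processNG}) and the operator ${\cal T}$ — to convert the $\mathbb F$-dynamics of $(Y^{\mathbb F},Z^{\mathbb F},K^{\mathbb F})$ into the $\mathbb G$-dynamics (\ref{RBSDEG}), the jump of the solution across $\tau$ being absorbed into $M^{\mathbb G}=(h-Y^{\mathbb F}/\widetilde{\cal E})\is (N^{\mathbb G})^T$; membership of this integrand in ${\cal I}^o_{loc}(N^{\mathbb G},\mathbb G)$ of (\ref{SpaceLNG}) guarantees $M^{\mathbb G}$ is a genuine $\mathbb G$-local martingale. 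The barrier constraint $Y^{\mathbb G}\geq S$ transfers from $Y^{\mathbb F}\geq S^{\mathbb F}=\widetilde{\cal E}S$ upon dividing by $\widetilde{\cal E}>0$, and the minimality (Skorokhod) condition transfers because $K^{\mathbb G}=\widetilde{\cal E}_-^{-1}\is(K^{\mathbb F})^{\tau}$ and the sets $\{Y^{\mathbb F}_->S^{\mathbb F}_-\}$ and $\{Y^{\mathbb G}_->S_-\}$ coincide on $\Rbrack0,\tau\Rbrack$.

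I expect the verification step in (b) to be the main obstacle: tracking the stochastic calculus across the enlargement, correctly identifying the $\mathbb G$-compensator that turns the stopped $\mathbb F$-martingale $Z^{\mathbb F}\is W$ and the $\mathbb F$-finite-variation terms into the form (\ref{RBSDEG}), and in particular checking that all the $\mathbb F$-to-$\mathbb G$ integrability requirements are met so that $M^{\mathbb G}$ and $K^{\mathbb G}$ belong to the asserted classes, is the delicate part; this is exactly where the positivity of $G$ and Theorem \ref{Toperator} are indispensable.

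Finally, for (c) I would note that (\ref{MainAssumptionL(p)}) implies (\ref{MainAssumption}) (since $L^p(\widetilde Q)\subset L^1(\widetilde Q)$ on the bounded horizon), so (a) and (b) apply and (\ref{secondrelation}) holds. Theorem \ref{EstimatesUnderQtilde} then bounds $\norm{\norm{(Y^{\mathbb G},Z^{\mathbb G},M^{\mathbb G},K^{\mathbb G})}}_{(\widetilde Q,\mathbb G,p)}$ by $\Delta_{\widetilde Q}(\xi,f,S^+)<\infty$, placing the $\mathbb G$-solution in $L^p(\widetilde Q,\mathbb G)$. To obtain the $L^p(P,\mathbb F)$-membership of $(Y^{\mathbb F},Z^{\mathbb F},K^{\mathbb F})$, I would transfer norms through (\ref{secondrelation}) using $\widetilde Z=G_0\widetilde{\cal E}/G$ and the boundedness $0<\widetilde{\cal E}\leq1$ to dominate the relevant $P$-expectations of the $\mathbb F$-quantities by the corresponding finite $\widetilde Q$-expectations of the $\mathbb G$-quantities, combined with the $L^p$-theory of the $\mathbb F$-RBSDE of part (a) applied to the now $L^p(P)$-data $(f^{\mathbb F},S^{\mathbb F},\xi^{\mathbb F})$.
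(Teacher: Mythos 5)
Your proposal is correct in substance and shares the paper's overall architecture (transfer the integrability of the data, solve the $\mathbb F$-RBSDE by classical theory, identify the two solutions through the Snell envelopes), but it takes a genuinely different route through assertion (b). Where you identify $Y^{\mathbb G}=Y^{\mathbb F}/\widetilde{\cal E}$ on $\Lbrack0,T\wedge\tau\Lbrack$ by hand --- reducing $\mathbb G$-stopping times to $\sigma\wedge\tau$ with $\sigma\in\mathcal{J}_t^T(\mathbb F)$, then applying Bayes with $\widetilde Z=G_0\widetilde{\cal E}/G$ and Lemma \ref{G-projection} to the representation (\ref{RBSDE2Snell}) --- the paper invokes the ready-made optimal-stopping transfer theorem \cite[Theorem 3-(b)]{ChoulliAndSafa}: it computes the $\mathbb F$-reward pair $(X^{\mathbb F},k^{(pr)})$ and the transformed reward ${\widetilde X}^{\mathbb F}=({\widetilde{\cal E}}X^{\mathbb F}-k^{(op)}\is{\widetilde{\cal E}})^T$, and reads off the formula for $Y^{\mathbb G}$ (including the $N^{\mathbb G}$-integral terms that identify $M^{\mathbb G}$) from that theorem together with \cite[Lemma 4-(a)]{ChoulliAndSafa}; your computation essentially re-derives the special case needed here. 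Likewise, for existence the paper does not verify (\ref{secondrelation}) by It\^o calculus in this bounded-horizon theorem: it obtains existence and uniqueness of the $\mathbb G$-solution directly from \cite[Theorem 2.13]{Klimsiak2} applied under $(\widetilde Q,\mathbb G)$ --- legitimate since $W^{T\wedge\tau}$ is a $(\widetilde Q,\mathbb G)$-Brownian motion by Proposition \ref{ZandQ} --- and then pins down all four components from $Y$ alone via uniqueness of the Doob--Meyer decomposition. Your verification route (It\^o on $Y^{\mathbb F}/\widetilde{\cal E}$, Theorem \ref{Toperator}, the jump at $\tau$ absorbed into $(h-Y^{\mathbb F}/\widetilde{\cal E})\is(N^{\mathbb G})^T$) is precisely the strategy the paper itself deploys for the unbounded-horizon analogue in the proof of Theorem \ref{Relationship4InfiniteBSDE}-(b), so it is sound within the paper's toolkit; it buys self-containedness at the cost of redoing the enlargement calculus, while the paper's citations buy brevity.

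One step in your part (c) needs repair. You propose to obtain the $L^p(P,\mathbb F)$-membership of $(Y^{\mathbb F},Z^{\mathbb F},K^{\mathbb F})$ by ``transferring norms through (\ref{secondrelation})'' from the $\mathbb G$-solution; this direction does not work as stated, because (\ref{secondrelation}) only sees the $\mathbb F$-solution on $\Lbrack0,T\wedge\tau\Lbrack$, and on $\{\tau<T\}$ the behaviour of $(Y^{\mathbb F},Z^{\mathbb F},K^{\mathbb F})$ on $\Rbrack\tau\wedge T,T\Rbrack$ is invisible to the $\widetilde Q$-norms of the $\mathbb G$-quantities. The paper's Part 3 avoids this entirely: it runs the transfer inequality (\ref{Condition2RBSDE(F)}) with $r=p$ (built on Lemma \ref{ExpecationQtilde2Pbis}) to conclude that the $\mathbb F$-data is $L^p(P)$, and then reruns the a priori-estimate argument of Theorem \ref{EstimatesUnderQtilde} under $(P,\mathbb F)$. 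Since you also mention applying the $L^p$-theory to the $L^p(P)$-data, your proof survives once the redundant norm-transfer claim is dropped; but be aware that the data transfer itself hides the only delicate point, namely that $\sup_{t}{\widetilde{\cal E}}_tS_t^+$ is not ${\widetilde{\cal E}}\sup_{t}S_t^+$, which the paper handles with the monotone-envelope inequality (\ref{Vprocess}) and the elementary bounds ${\widetilde{\cal E}}^r\leq{\widetilde{\cal E}}$ and $(\vert h\vert\is V^{\mathbb F})^r\leq\vert h\vert^r\is V^{\mathbb F}$.
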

 The proof of the theorem relies essentially on the following lemma. 
   \begin{lemma}\label{ExpecationQtilde2Pbis} Let $X$ be an $\mathbb F$-optional process, and $T\in (0,\infty)$. Then the following assertions hold.\\
     {\rm{(a)}} If $X$ is non-negative, then 
       \begin{equation}\label{XunderQtilde}
E^{\widetilde Q}[X_{T\wedge\tau}]= E\left[G_0\int_0^T X_sdV_s^{\mathbb F}+G_0X_T{\widetilde {\cal E}}_T+X_0(1-G_0)\right].
     \end{equation}
     {\rm{(b)}} If $X$ is RCLL and nondecreasing with $X_0\geq 0$, then 
     \begin{eqnarray}\label{XunderQtilde2}
   E^{\widetilde Q}[X_{T\wedge\tau}]= E\left[X_0+G_0\int_0^T{\widetilde {\cal E}}_{s-}dX_s\right].\end{eqnarray}
     {\rm{(c)}} If $X$ is RCLL and nondecreasing with $X_0=0$, then 
    \begin{eqnarray}\label{XunderQtilde3}
    \Vert({\widetilde {\cal E}}_{-}\is{X})_{T}\Vert_{L^r(P)}\leq{2}G_{0}^{-1/r}\Vert{X}_{T\wedge\tau}\Vert_{L^r(\widetilde{Q})},\quad\mbox{for any}\ r\in[1,\infty).\end{eqnarray}
     \end{lemma}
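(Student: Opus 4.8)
The plan is to establish assertion (a) first and then obtain (b) and (c) as consequences of it. For (a), I would pass from $\widetilde{Q}$ to $P$ through the density, writing $E^{\widetilde{Q}}[X_{T\wedge\tau}]=E[\widetilde{Z}_{T\wedge\tau}X_{T\wedge\tau}]$, and split according to $\{\tau\le T\}$ and $\{\tau>T\}$. The first ingredient is the multiplicative identity $\widetilde{Z}=G_0\widetilde{\cal E}/G$, which follows from Lemma \ref{Decomposition4G} and the definition (\ref{Ztilde}). On $\{\tau>T\}$ one has $T\wedge\tau=T$, so conditioning on ${\cal F}_T$ together with $E[I_{\{\tau>T\}}\mid{\cal F}_T]=G_T$ and $\widetilde{Z}_TG_T=G_0\widetilde{\cal E}_T$ produces the middle term $E[G_0\widetilde{\cal E}_TX_T]$. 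On $\{\tau\le T\}$ I would invoke the defining property of the $\mathbb F$-dual optional projection, $E[U_\tau I_{\{\tau\le T\}}]=E[\int_{[0,T]}U_s\,dD^{o,\mathbb F}_s]$, applied to the nonnegative $\mathbb F$-optional process $U=\widetilde{Z}X$.

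The crux of (a) is then to identify $\widetilde{Z}_s\,dD^{o,\mathbb F}_s$ with $G_0\,dV^{\mathbb F}_s$ off the origin. Since $dV^{\mathbb F}_s=-d\widetilde{\cal E}_s=\widetilde{\cal E}_{s-}\widetilde{G}_s^{-1}\,dD^{o,\mathbb F}_s$, this reduces to the jump relation $\widetilde{\cal E}_s/G_s=\widetilde{\cal E}_{s-}/\widetilde{G}_s$, which I would derive from $\Delta D^{o,\mathbb F}_s=\widetilde{G}_s-G_s$ and $\widetilde{\cal E}_s=\widetilde{\cal E}_{s-}(1-\widetilde{G}_s^{-1}\Delta D^{o,\mathbb F}_s)$, and which holds $dD^{o,\mathbb F}$-a.e. (trivially where $\Delta D^{o,\mathbb F}=0$, since then $\widetilde{G}=G$). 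Finally the atom at the origin must be isolated: $\widetilde{Z}_0=\widetilde{\cal E}_0=1$ and $\Delta D^{o,\mathbb F}_0=\widetilde{G}_0-G_0=1-G_0$, so the point mass at $0$ contributes exactly $X_0(1-G_0)$. Collecting the three pieces yields (\ref{XunderQtilde}).

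For (b) I would not recompute but deduce it from (a) by integration by parts. With $V^{\mathbb F}=1-\widetilde{\cal E}$ and $\widetilde{\cal E}_0=1$, the Stieltjes product rule for the finite-variation processes $X$ and $\widetilde{\cal E}$ yields the pathwise identity $\int_0^T X_s\,dV^{\mathbb F}_s+X_T\widetilde{\cal E}_T-X_0=\int_0^T\widetilde{\cal E}_{s-}\,dX_s$, the covariation $[X,\widetilde{\cal E}]_T$ cancelling $\int_0^T(X_{s-}-X_s)\,d\widetilde{\cal E}_s$. Inserting this into (\ref{XunderQtilde}), the difference between (\ref{XunderQtilde2}) and the formula of (a) collapses to $E[G_0\cdot 0]=0$, which proves (\ref{XunderQtilde2}).

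The most delicate assertion is (c), and this is where the constant $2$ originates. The same integration by parts gives the pathwise splitting $(\widetilde{\cal E}_{-}\is X)_T=X_T\widetilde{\cal E}_T+(X\is V^{\mathbb F})_T$ into two nonnegative summands (using $X_0=0$). Applying $(a+b)^r\le 2^{r-1}(a^r+b^r)$, it suffices to bound each summand in $L^r(P)$ by $G_0^{-1}\Vert X_{T\wedge\tau}\Vert_{L^r(\widetilde{Q})}^r$. For the first, since $0\le\widetilde{\cal E}_T\le 1$ and $\widetilde{\cal E}_{-}$ is nonincreasing, $(X_T\widetilde{\cal E}_T)^r\le\widetilde{\cal E}_TX_T^r\le\int_0^T\widetilde{\cal E}_{s-}\,d(X^r)_s$, whose expectation equals $G_0^{-1}\Vert X_{T\wedge\tau}\Vert_{L^r(\widetilde{Q})}^r$ by part (b) applied to the nondecreasing process $X^r$ (note $(X^r)_0=0$). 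For the second, Jensen against the sub-probability measure $dV^{\mathbb F}$ (total mass $V^{\mathbb F}_T=1-\widetilde{\cal E}_T\le 1$) gives $(X\is V^{\mathbb F})_T^r\le(X^r\is V^{\mathbb F})_T$, and part (a) applied to $X^r$ shows $G_0E[(X^r\is V^{\mathbb F})_T]=E^{\widetilde{Q}}[X_{T\wedge\tau}^r]-E[G_0\widetilde{\cal E}_TX_T^r]\le\Vert X_{T\wedge\tau}\Vert_{L^r(\widetilde{Q})}^r$. Summing the two bounds and taking $r$-th roots turns $2^{r-1}\cdot 2=2^r$ into the factor $2G_0^{-1/r}$. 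I expect the jump/atom bookkeeping in (a) to be the main technical obstacle, while the genuinely conceptual point is the nonnegative splitting in (c) that converts a nonlinear $L^r$ estimate into two applications of the linear identities (a)--(b).
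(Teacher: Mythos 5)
Your proposal is correct and follows essentially the same route as the paper's proof: in (a) the split along $\{\tau\le T\}$ and $\{\tau>T\}$ with the atom at $0$ isolated, the dual optional projection, and the identification $\widetilde{Z}_s\,dD^{o,\mathbb F}_s=G_0\,dV^{\mathbb F}_s$ off the origin (the paper asserts this directly, while you derive the jump relation $\widetilde{\cal E}_s/G_s=\widetilde{\cal E}_{s-}/\widetilde{G}_s$, and you also use the density $\widetilde{Z}=G_0\widetilde{\cal E}/G$ consistently where the paper's proof has a harmless notational slip writing $X/\widetilde{Z}$); in (b) the same integration-by-parts identity inserted into (a). The only cosmetic deviation is in (c), where you bound the two nonnegative summands $X_T\widetilde{\cal E}_T$ and $(X\is V^{\mathbb F})_T$ separately via (b) and (a), whereas the paper recombines them as $2^r(\widetilde{\cal E}_{-}\is X^r)$ and applies (b) once — both yield the same constant $2G_0^{-1/r}$.
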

 The proof of the lemma is relegated to Appendix \ref{Appendix4Proofs}, while below we prove the previous theorem.
\begin{proof}[Proof of Theorem \ref{abcde}] We put $F_t:=\int_0^t\vert{f}_s\vert ds$ and $F^{\mathbb{F}}_t:=\int_0^t \vert{f}^{\mathbb{F}}_s\vert{ds}$, and the rest of the proof is divided into three parts.\\
{\bf Part 1.} Herein, for $r\in [1,+\infty)$, we prove that there exists $C_r\in(0,\infty)$ satisfying
\begin{equation}\label{Condition2RBSDE(F)}
 \begin{split}\norm{{F}_T^{\mathbb{F}}+\vert\xi^{\mathbb{F}}\vert+\int_0^T\vert{h}_s\vert{d}V^{\mathbb{F}}_s+\sup_{0\leq u\leq{T}}(S_u^{\mathbb{F}})^+}_{L^r(P)} \leq{C_r}G_0^{-1/r}\Vert{F}_{T\wedge\tau}+\vert\xi\vert+\sup_{0\leq u\leq\tau\wedge T}S_u^+\Vert_{L^r(\widetilde{Q})}.\end{split}\end{equation}
On the one hand, by combining (\ref{XunderQtilde}) applied to $\vert{h}\vert^r$  (see Lemma \ref{ExpecationQtilde2Pbis}) and using ${\widetilde{\cal E}}^r\leq {\widetilde{\cal E}}$ and $(\vert{h}\vert\is{V}^{\mathbb{F}})^r\leq \vert{h}\vert^r\is{V}^{\mathbb{F}}$ afterwards, we derive
\begin{align}\label{Inequality4Xi}
\norm{\vert\xi^{\mathbb{F}}\vert+\int_0^T\vert{h}_s\vert{d}V^{\mathbb{F}}_s}_{L^r(P)}&\leq \norm{{h}_T\sqrt[r]{{\widetilde {\cal E}}_T}}_{L^r(P)}+\sqrt[r]{\norm{(\vert{h}\vert^r\is{V}^{\mathbb{F}})_T}_{L^1(P)}}\nonumber\\
&\leq{2}G_0^{-1/r}\Vert\xi\Vert_{L^r(\widetilde{Q})}.\end{align}
On the other hand, we use the following facts, which hold for any RCLL and nonnegative process $V$,
\begin{equation}\label{Vprocess}
 \sup_{0\leq s\leq{t}}{\widetilde {\cal E}}_sV_s \leq\sup_{0\leq s\leq{t}}{\widetilde {\cal E}}_sV^*_s\leq{V}_0+\int_0^t {\widetilde {\cal E}}_{s-}dV^*_s,\ \mbox{where}\ V^*_t:=\sup_{0\leq{s}\leq{t}}V_s,
 \end{equation}
 and apply (\ref{XunderQtilde3}) to $F+\sup_{0\leq{u}\leq\cdot}S^+_u-S^+_0$ afterwards, we get 
\begin{align*}
\Vert{F}_T^{\mathbb{F}}+\sup_{0\leq u\leq{T}}(S_u^{\mathbb{F}})^+\Vert_{L^r(P)}&\leq \Vert{S}_0^+\Vert_{L^r(P)}+\Vert{F}_T^{\mathbb{F}}+\sup_{0\leq u\leq{T}}(S_u^{\mathbb{F}})^+-S_0^+\Vert_{L^r(P)}\\
&\leq  \Vert{S}_0^+\Vert_{L^r(\widetilde{Q})} +{{2}\over{\sqrt[r]{G_0}}}\Vert {F}_{T\wedge\tau}+\sup_{0\leq u\leq{T\wedge\tau}}S_u^+-S_0^+\Vert_{L^r(\widetilde{Q})}\\
&\leq(1+2G_0^{-1/r})\Vert {F}_{T\wedge\tau}+\sup_{0\leq u\leq{T\wedge\tau}}S_u^+\Vert_{L^r(\widetilde{Q})} .\end{align*}
Thus, by combining this with (\ref{Inequality4Xi}), (\ref{Condition2RBSDE(F)}) follows, and this ends part 1.\\
{\bf Part 2.}  This part proves assertions (a) and (b).  Thanks to part 1, for the case of $r=1$, we deduce that (\ref{MainAssumption}) implies that 
$$
E\left[\vert\xi^{\mathbb{F}}\vert+\int_0^T\vert{f}^{\mathbb{F}}_s\vert{ds}+\int_0^T\vert{h}_s\vert{d}V^{\mathbb{F}}_s +\sup_{0\leq{t}\leq{T}}(S^{\mathbb{F}}_t)^+\right]<\infty.
$$
By combining this with $S^{\mathbb{F}}_t\leq{E}\left[\sup_{0\leq{t}\leq{T}}(S^{\mathbb{F}}_t)^+\ \big|{\cal{F}}_t\right]$, we can apply directly \cite[Theorem 2.13 and Corllary 2.9]{Klimsiak2} and deduce the existence and uniqueness of the solution to (\ref{RBSDEF}) satisfying (\ref{RBSDE2SnellF}). This proves assertion (a). \\
Similarly, thanks to (\ref{MainAssumption})  and $S_t\leq{E}^{\widetilde{Q}}\left[\sup_{0\leq{t}\leq{T}}S_t^+\ \big|{\cal{G}}_t\right]$, we directly apply again \cite[Theorem 2.13]{Klimsiak2} to (\ref{RBSDEG}) under $\mathbb{G}$ and $\widetilde{Q}$, and conclude the existence and the uniqueness of the solution to this RBSDE.  Thus, the rest of this part proves that  this solution, that we denote by $(Y^{\mathbb G},Z^{\mathbb G},K^{\mathbb G}, M^{\mathbb G})$, fulfills  (\ref{secondrelation}). To this end, on the one hand, thanks to the Doob-Meyer decomposition under $(\widetilde{Q},\mathbb G)$, we remark that  for any solution $(Y,Z,K, M)$ to (\ref{RBSDEG}), we have $(Y,Z,K, M)=(Y^{\mathbb G},Z^{\mathbb G},K^{\mathbb G}, M^{\mathbb G})$ if and only if $Y=Y^{\mathbb G}$. On the other hand, due to (\ref{RBSDE2Snell}) --see Lemma \ref{Solution2SnellEnvelop}--, we have 
\begin{equation}\label{YG2StildeG}
Y^{\mathbb G}_t+\int_0^{\tau\wedge{T}\wedge{t}}f(s)ds={\cal S}^{\mathbb{G},\widetilde{Q}}_t:=\esssup_{\theta\in {\cal T}_{t\wedge\tau}^{T\wedge\tau}(\mathbb G)}E^{\widetilde{Q}}\left[X^{\mathbb{G}}_{\theta}\Big|{\cal{G}}_t\right],\end{equation}
where $  X^{\mathbb G}$ is RCLL given by 
\begin{equation*}
 X^{\mathbb G}:=\int_0^{\tau\wedge{T}\wedge\cdot}f(s)ds+SI_{\Lbrack0,\tau\wedge{T}\Lbrack}+h_{\tau\wedge{T}}I_{\Lbrack\tau\wedge{T},+\infty\Lbrack}.\end{equation*}
Therefore, in order to apply  \cite[Theorem 3-(b)]{ChoulliAndSafa}, we need to find the unique pair $(X^{\mathbb F}, k^{(pr)})$ associated to $X^{\mathbb G}$. To this end, we remark that 
\begin{eqnarray*}
SI_{\Lbrack0,\tau\wedge{T}\Lbrack}=SI_{\Lbrack0,\tau\Lbrack}I_{\Lbrack0,{T}\Lbrack}\quad\mbox{and}\quad h_{\tau\wedge{T}}I_{\Lbrack\tau\wedge{T},+\infty\Lbrack}I_{\Lbrack0,\tau\Lbrack}=h_{T}I_{\Lbrack0,\tau\Lbrack}I_{\Lbrack{T},+\infty\Lbrack},\end{eqnarray*}
and derive
\begin{eqnarray*}
X^{\mathbb F}=\int_0^{T\wedge\cdot}f(s)ds+SI_{\Lbrack0,{T}\Lbrack}+h_{T}I_{\Lbrack{T},+\infty\Lbrack},\ 
k^{(pr)}=\int_0^{T\wedge\cdot}f(s)ds+h_{T\wedge\cdot}=k^{(op)}.
\end{eqnarray*}
Therefore, the process ${\widetilde{X}}^{\mathbb{F}}$ of Theorem  \cite[Theorem 3-(b)]{ChoulliAndSafa} is given by
\begin{equation*}
{\widetilde{X}}^{\mathbb{F}}:=({\widetilde{\cal E}}X^{\mathbb F}-k^{(op)}\is{\widetilde{\cal E}})^T=\int_0^{T\wedge\cdot}f^{\mathbb F}(s)ds+(h\is V^{\mathbb F})^T+S^{\mathbb F} I_{\Lbrack0,T\Lbrack}+\xi^{\mathbb F}I_{\Lbrack{T},+\infty\Lbrack},\end{equation*}
while due to (\ref{RBSDE2SnellF}) its Snell envelop ${\widetilde{S}}^{\mathbb{F}}$ satisfies 
\begin{equation}\label{TildeF4YF}
Y^{\mathbb F}+L^{\mathbb F}={\widetilde{S}}^{\mathbb{F}},\quad L^{\mathbb F}:=\int_0^{T\wedge\cdot}f^{\mathbb F}(s)ds+\int_0^{T\wedge\cdot}h_sdV^{\mathbb F}_s.
\end{equation}
Thus, by applying \cite[Theorem 3-(b)]{ChoulliAndSafa} and using (\ref{YG2StildeG}), (\ref{TildeF4YF}), and 
\begin{eqnarray*}
k^{(op)}{\widetilde{\cal E}}-k^{(op)}\is{\widetilde{\cal E}}=L^{\mathbb F}+\widetilde{\cal E}hI_{\Lbrack0,T\Lbrack} +\xi^{\mathbb F}I_{\Lbrack{T},+\infty\Lbrack},
\end{eqnarray*}
 we obtain
\begin{align*}
&Y^{\mathbb G}+\int_0^{\tau\wedge{T}\wedge\cdot}f(s)ds={\cal S}^{\mathbb {G}, \widetilde{Q}}={{{\widetilde{\cal S}}^{\mathbb F}}\over{ \widetilde{\cal E}^T}}(I_{\Lbrack0,\tau\Lbrack})^T+{{L^{\mathbb F}+\widetilde{\cal E}hI_{\Lbrack0,T\Lbrack} +\xi^{\mathbb F}I_{\Lbrack{T},+\infty\Lbrack}}\over{\widetilde{\cal E}}}\is(N^{\mathbb G})^T\\
&={{Y^{\mathbb F}+L^{\mathbb F}}\over{ \widetilde{\cal E}^T}}(I_{\Lbrack0,\tau\Lbrack})^T+{{L^{\mathbb F}}\over{ \widetilde{\cal E}}}\is(N^{\mathbb G})^T+\left(hI_{\Lbrack0,T\Lbrack} +h_T{I}_{\Lbrack{T},+\infty\Lbrack}\right)\is(N^{\mathbb G})^T\\
&={{Y^{\mathbb F}}\over{ \widetilde{\cal E}^T}}(I_{\Lbrack0,\tau\Lbrack})^T+{1\over{ \widetilde{\cal E}_{-}}}\is (L^{\mathbb F})^{T\wedge\tau}+h\is D^T- {{h}\over{\widetilde{G}}}I_{\Rbrack0,\tau\wedge{T}\Rbrack}\is D^{o,\mathbb F}\\
&={{Y^{\mathbb F}}\over{ \widetilde{\cal E}}}I_{\Lbrack0,T\wedge\tau\Lbrack}+\int_0^{\tau\wedge{T}\wedge\cdot}f(s)ds+\xi{I}_{\Lbrack{T}\wedge\tau,+\infty\Lbrack}.
\end{align*}
The fourth equality is due to  \cite[Lemma 4-(a)]{ChoulliAndSafa} , while the last equality is due to  $h\is D^T=\xi{I}_{\{\tau\leq{T}\}}{I}_{\Lbrack{T}\wedge\tau,+\infty\Lbrack}$ and 
$${{Y^{\mathbb F}}\over{ \widetilde{\cal E}^T}}(I_{\Lbrack0,\tau\Lbrack})^T={{Y^{\mathbb F}}\over{ \widetilde{\cal E}^T}}I_{\{\tau>T\}}+{{Y^{\mathbb F}}\over{ \widetilde{\cal E}}}I_{\{\tau\leq T\}}I_{\Lbrack0,T\wedge\tau\Lbrack}={{Y^{\mathbb F}}\over{ \widetilde{\cal E}}}I_{\Lbrack0,T\wedge\tau\Lbrack}+\xi{I}_{\{\tau>T\}}{I}_{\Lbrack{T}\wedge\tau,+\infty\Lbrack}.$$
This proves assertion (b), and ends part 2.\\
{\bf Part 3.} This part proves assertion (c). To this end, we suppose that (\ref{MainAssumptionL(p)}) holds. Thanks to part 1 with $r=p$, then we deduce that 
$$ \Delta(\xi^{\mathbb{F}},f^{\mathbb{F}},(S^{\mathbb{F}})^+):=\norm{{F}_T^{\mathbb{F}}+\vert\xi^{\mathbb{F}}\vert+\int_0^T\vert{h}_s\vert{d}V^{\mathbb{F}}_s+\sup_{0\leq u\leq{T}}(S_u^{\mathbb{F}})^+}_{L^p(P)}<\infty.$$ On the one hand, by using assertion (a) and adapting literally the method in the proof of Theorem \ref{EstimatesUnderQtilde}, we deduce the existence of a constant $C>0$ depending on $p$ only such that 
\begin{equation*}
\Vert{Y }^{\mathbb{F}}\Vert_{\mathbb{D}_{T}(\mathbb{F},p)}+\Vert{Z }^{\mathbb{F}}\Vert_{\mathbb{S}_{T}(\mathbb{F},p)}
+\Vert{K}^{{\mathbb{F}}}\Vert_{{\cal{A}}_{T}(\mathbb{F},p)}\leq{C} \Delta(\xi^{\mathbb{F}},f^{\mathbb{F}},(S^{\mathbb{F}})^+).\end{equation*}
 This proves that the unique solution to (\ref{RBSDEF}) is an $L^p(P,\mathbb{F})$-solution. On the other hand, thanks to Theorems \ref{EstimatesUnderQtilde} and \ref{EstimatesUnderQtilde1} and assertion (b), we deduce that (\ref{RBSDEG}) has a unique $L^p(\widetilde{Q},\mathbb G)$-solution, which satisfies (\ref{secondrelation}). This proves assertion (c), and completes the proof of the theorem. \end{proof}
We end this section by the following remarks which will be useful.
\begin{remark}\label{Extension4Section4toSigma}
{\rm{(a)}} It is worth mentioning (it is easy to check) that the main  results of this section (especially Theorems \ref{EstimatesUnderQtilde},\ref{EstimatesUnderQtilde1} and \ref{abcde}) remain valid if we replace $T$ with any bounded $\mathbb{F}$-stopping time $\sigma$. In this case, one should use the probability $\widetilde{Q}_{\sigma}:=\widetilde{Z}_{\sigma\wedge\tau}\cdot P$ instead of $\widetilde{Q}$. \\
{\rm{(b)}} If $(Y^{\mathbb{G}},Z^{\mathbb{G}},K^{\mathbb{G}},M^{\mathbb{G}})$ is the solution of an RBSDE associated to $(f, \xi, S)$ on the interval $\Lbrack0,\sigma\wedge\tau\Rbrack$, then it is also the solution to the RBSDE associated with  $(fI_{\Lbrack0,\sigma\Rbrack}, \xi, S^{\sigma})$  on any interval $\Lbrack0,\theta\wedge\tau\Rbrack$ with $\theta\geq \sigma$.
\end{remark}
\section{Linear RBSDEs with unbounded horizon} \label{LinearUnboundedSection}
This section focuses on the following RBSDE 
\begin{equation}\label{RBSDEGinfinite}
\begin{split}
&dY=-f(t)d(t\wedge\tau)-d(K+M)+ZdW^{\tau},\quad Y_{\tau}=\xi=h_{\tau},\\
&Y_{t}\geq S_{t};\quad 0\leq t<  \tau,\quad \displaystyle\int_{0}^{\tau}(Y_{t-}-S_{t-})dK_{t}=0,\quad P\mbox{-a.s..}
\end{split}
\end{equation}
It is important to mention that $\widetilde{Q}$ (defined in (\ref{Qtilde})) depends heavily on the finite horizon planning $T$, and in general the process ${\widetilde Z}^{\tau}$ defined in (\ref{Ztilde}) might not be a $\mathbb{G}$-uniformly integrable martingale, see \cite{Choulli1} for details. Thus, the fact of letting $T$ goes to infinity triggers serious challenges in both technical and conceptual sides. In fact, both the condition (\ref{MainAssumption}) and the RBSDE (\ref{RBSDEF}) might not make sense when we take $T$ to infinity, as the limit of $h_T$ when $T$ goes to infinity might not even exist. The rest of this section is divided into two subsections. The first subsection focuses on existence and uniqueness of the solution to (\ref{RBSDEGinfinite}), while the second subsection deals with the $\mathbb F$-RBSDE counter part to it. 
 \subsection{Existence, uniqueness and priori estimates}\label{Subsection5.1} Our approach to the aforementioned  challenges has two steps. The first step relies on the following lemma, and the two theorems that follow it, where we get rid-off of $\widetilde Q$ in the left-hand-sides of our priori estimates in Theorems \ref{EstimatesUnderQtilde} and \ref{EstimatesUnderQtilde1}. The second step addresses the limits of the  terms on the right-hand-side of the estimates of these theorems.
     \begin{lemma}\label{technicallemma1} Let $T\in (0,+\infty)$, $\widetilde{Q}$ be the probability given in (\ref{Qtilde}), and $\widetilde{\cal E}$ be the process defined in (\ref{EpsilonTilde}). Then the following assertions hold.\\
     {\rm{(a)}} For any $p\in(1,+\infty)$ and any RCLL $\mathbb G$-semimartingale $Y$, we have
    \begin{eqnarray}\label{Equality4YG}
     E\left[\sup_{0\leq s\leq{T\wedge\tau}}{\widetilde{\cal E}}_s\vert{Y_s}\vert^p\right]\leq {G_0^{-1} }E^{\widetilde{Q}}\left[\sup_{0\leq s\leq{T\wedge\tau}}\vert{Y_s}\vert^p\right].     \end{eqnarray}
      {\rm{(b)}} For any $a\in (0,+\infty)$, we put $\kappa(a):=3^{1/a}(5+(\max(a, a^{-1}))^{1/a})$. Then for any RCLL, nondecreasing and $\mathbb G$-adapted $K$ with $K_0=0$, we have 
    \begin{equation}\label{Equality4KG}
     E\left[\sqrt[a]{\int_0^{T\wedge\tau}{\widetilde{\cal E}}_{s-}^{a}dK_s}\right]\leq {{\kappa(a)}\over{ G_0}} E^{\widetilde{Q}}\left[\sqrt[a]{K_{T\wedge\tau}}+\sum_{0< s\leq _{T\wedge\tau}} \widetilde{G}_s\sqrt[a]{\Delta K_s}\right].\end{equation}
        {\rm{(c)}} For any $p>1$ and any nonnegative and $\mathbb G$-optional process $H$, we have 
     \begin{equation}\label{Equality4MG}
     \begin{split}
    E\left[({\widetilde{\cal E}}_{-}^{2/p}H\is [N^{\mathbb G},N^{\mathbb G}])_{T\wedge\tau} ^{p/2}\right]\leq \kappa(2/p)G_0^{-1}  E^{\widetilde{Q}}\left[(H\is [N^{\mathbb G},N^{\mathbb G}]_{T\wedge\tau})^{p/2}+ (H^{p/2}\widetilde{G}\is \mbox{Var}(N^{\mathbb G}))_{T\wedge\tau}\right].
    \end{split}
     \end{equation}
     {\rm{(d)}} For any $p>1$ and any nonnegative and $\mathbb F$-optional process $H$, we have 
     \begin{equation}\label{Equality4MGOptionalF}
     \begin{split}
   E\left[({\widetilde{\cal E}}_{-}^{2/p}H\is [N^{\mathbb G},N^{\mathbb G}])_{T\wedge\tau} ^{p/2}\right] \leq\kappa(2/p)G_0^{-1}  E^{\widetilde{Q}}\left[(H\is [N^{\mathbb G},N^{\mathbb G}]_{T\wedge\tau})^{p/2}+ 2(H^{p/2}I_{\Rbrack0,\tau\Lbrack}\is D^{o,\mathbb F})_T\right].
    \end{split}
     \end{equation}
        \end{lemma}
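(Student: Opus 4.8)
The plan is to deduce all four estimates from one pointwise comparison between $\widetilde{\cal E}$ and the density process $\widetilde Z$, treating (a) by optional sampling and (b)--(d) by an integration-by-parts/domination argument whose fractional-power bookkeeping is responsible for the constant $\kappa(a)$. First I would record the basic bound. Lemma~\ref{Decomposition4G} gives $G=G_0{\cal E}(G_-^{-1}\is m)\widetilde{\cal E}$, so with $\widetilde Z=1/{\cal E}(G_-^{-1}\is m)$ from (\ref{Ztilde}) one has $\widetilde Z=G_0\widetilde{\cal E}/G$; since $0<G\le1$ and $0<G_-\le1$ this yields $\widetilde{\cal E}_t\le G_0^{-1}\widetilde Z_t$ and $\widetilde{\cal E}_{t-}\le G_0^{-1}\widetilde Z_{t-}$ on $\Lbrack0,\tau\Rbrack$. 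On $\Lbrack0,T\wedge\tau\Rbrack$ the process $\widetilde Z=\widetilde Z^\tau$ is, by Proposition~\ref{ZandQ}, a nonnegative $\mathbb G$-martingale closed by $\widetilde Z_{T\wedge\tau}$, so $E^{\widetilde Q}[\Phi]=E[\widetilde Z_{T\wedge\tau}\Phi]$ for every nonnegative ${\cal G}_{T\wedge\tau}$-measurable $\Phi$, and optional sampling applies at each $\mathbb G$-stopping time $\rho\le T\wedge\tau$.

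For assertion (a), put $U:=|Y|^p$ and let $\rho$ be a near-maximizer of the RCLL process $\widetilde{\cal E}U$ on $\Lbrack0,T\wedge\tau\Rbrack$ (one uses $\varepsilon$-optimal stopping times and lets $\varepsilon\downarrow0$). Then $E[\sup_{0\le s\le T\wedge\tau}\widetilde{\cal E}_sU_s]=E[\widetilde{\cal E}_\rho U_\rho]\le G_0^{-1}E[\widetilde Z_\rho U_\rho]$; since $U_\rho$ is ${\cal G}_\rho$-measurable, optional sampling gives $E[\widetilde Z_\rho U_\rho]=E[\widetilde Z_{T\wedge\tau}U_\rho]$, and bounding $U_\rho\le\sup_{0\le s\le T\wedge\tau}U_s$ turns the right-hand side into $G_0^{-1}E^{\widetilde Q}[\sup_{0\le s\le T\wedge\tau}U_s]$, which is exactly (\ref{Equality4YG}). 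The only delicate point is the existence of the maximizer, which is standard for a right-continuous process.

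For assertion (b), the bound $\widetilde{\cal E}_{s-}^a\le G_0^{-a}\widetilde Z_{s-}^a$ reduces the claim to dominating $E[(\widetilde Z_-^a\is K)_{T\wedge\tau}^{1/a}]$ by $\kappa(a)E^{\widetilde Q}[\sqrt[a]{K_{T\wedge\tau}}+\sum\widetilde G\sqrt[a]{\Delta K}]$. Here I would apply the product rule to the semimartingale $\widetilde Z^aK$, writing $\widetilde Z_-^a\is K=\widetilde Z^aK-K_-\is\widetilde Z^a-[\widetilde Z^a,K]$. The terminal term is the decisive one: since $(\widetilde Z_{T\wedge\tau}^aK_{T\wedge\tau})^{1/a}=\widetilde Z_{T\wedge\tau}\sqrt[a]{K_{T\wedge\tau}}$, taking $P$-expectation of its $1/a$-power returns exactly $E^{\widetilde Q}[\sqrt[a]{K_{T\wedge\tau}}]$. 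The covariation $[\widetilde Z^a,K]=\sum\Delta(\widetilde Z^a)\,\Delta K$ is rewritten through the jumps of $\widetilde Z$ (identified from $\widetilde Z=G_0\widetilde{\cal E}/G$ and (\ref{GGtilde})), which is what forces the weight $\widetilde G$ and produces the correction $\sum\widetilde G_s\sqrt[a]{\Delta K_s}$; the remaining martingale part $K_-\is\widetilde Z^a$ is absorbed by a domination (Lenglart-type) inequality. Passing from the power $a$ back to the power $1/a$ and recombining the pieces with the elementary inequalities $(x+y)^{1/a}\le\max(1,2^{1/a-1})(x^{1/a}+y^{1/a})$ (whence $\max(a,a^{-1})$) and a three-term split (whence $3^{1/a}$) yields $\kappa(a)$. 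I expect this step -- controlling the non-sign-definite martingale part and fitting the jump correction into the clean $\widetilde G$-weighted form uniformly over all $a\in(0,\infty)$ -- to be the main obstacle.

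Finally, assertion (c) follows by applying (b) to the nondecreasing $\mathbb G$-adapted process $K:=H\is[N^{\mathbb G},N^{\mathbb G}]$ with $a=2/p$; since $N^{\mathbb G}$ has finite variation and is purely discontinuous, $[N^{\mathbb G},N^{\mathbb G}]=\sum(\Delta N^{\mathbb G})^2$, and the resulting jump sum $\sum\widetilde G_s\sqrt[a]{\Delta K_s}$ is recast as $(H^{p/2}\widetilde G\is\Var(N^{\mathbb G}))_{T\wedge\tau}$ using $\Delta\Var(N^{\mathbb G})=|\Delta N^{\mathbb G}|$ together with a bound on the jumps of $N^{\mathbb G}$. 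For assertion (d), with $H$ now $\mathbb F$-optional, I would insert the explicit form $N^{\mathbb G}=D-\widetilde G^{-1}I_{\Rbrack0,\tau\Rbrack}\is D^{o,\mathbb F}$ from Theorem~\ref{Toperator}(b) and use Lemma~\ref{G-projection} and Lemma~\ref{Lemma4.11} to replace $\widetilde G\is\Var(N^{\mathbb G})$ by $2\,I_{\Rbrack0,\tau\Lbrack}\is D^{o,\mathbb F}$, the factor $2$ being precisely the one appearing in (\ref{Equality4MGOptionalF}).
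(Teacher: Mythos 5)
There are two genuine gaps, both in the core assertions (a) and (b); your reductions for (c) and (d) — applying (b) to $K=H\is[N^{\mathbb G},N^{\mathbb G}]$ with $a=2/p$, using $\vert\Delta N^{\mathbb G}\vert\leq 1$ to pass to $H^{p/2}\widetilde{G}\is\mbox{Var}(N^{\mathbb G})$, and projecting $\Var(N^{\mathbb G})=(G/\widetilde G)\is D+\widetilde{G}^{-1}I_{\Rbrack0,\tau\Lbrack}\is D^{o,\mathbb F}$ onto $2\,I_{\Rbrack0,\tau\Lbrack}\is D^{o,\mathbb F}$ for $\mathbb F$-optional $H$ — are exactly the paper's, but they stand on (b). In (a), your argument needs a $\mathbb G$-stopping time $\rho$ that is a \emph{pathwise} near-argmax of $\widetilde{\cal E}\vert Y\vert^p$, and no such stopping time exists in general: knowing you are at the running maximum requires knowledge of the future, and indeed $E[\sup_{s\leq T}X_s]$ strictly exceeds $\sup_{\rho}E[X_\rho]$ for, say, $X=W^2$ with $W$ a Brownian motion ($E[W_\rho^2]=E[\rho]\leq T$ while $E[\sup_{s\leq T}W_s^2]>T$). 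The $\varepsilon$-optimal stopping times you invoke solve the optimal stopping problem, not the pathwise maximization, so the first equality $E[\sup\widetilde{\cal E}U]=E[\widetilde{\cal E}_\rho U_\rho]$ cannot be arranged. The paper's fix is precisely the missing idea: since $\widetilde{\cal E}$ is \emph{nonincreasing}, integration by parts gives the pathwise bound $\widetilde{\cal E}_t(Y^*_t)^p\leq (Y_0^*)^p+(\widetilde{\cal E}\is (Y^*)^p)_t$ with $Y^*$ the running supremum, and then $\widetilde{\cal E}=G\widetilde{Z}/G_0$ together with the $\mathbb G$-martingale property of $\widetilde{Z}^\tau$ converts $E[\int_0^{T\wedge\tau}\widetilde{\cal E}_s\,d(Y^*_s)^p]$ into $G_0^{-1}E^{\widetilde Q}[(Y^*_{T\wedge\tau})^p]$ with constant exactly $G_0^{-1}$ and no maximal inequality at all.

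In (b), the decomposition through $\widetilde{Z}^a$ is structurally the wrong object: for $a\neq 1$, $\widetilde{Z}^a$ is not a martingale, so ``the remaining martingale part $K_-\is\widetilde{Z}^a$'' is a mischaracterization, and its drift as well as the jumps $\Delta(\widetilde{Z}^a)$ — which are driven by the martingale jumps $\Delta m$ — carry no sign, so neither a Lenglart-type absorption of $K_-\is\widetilde{Z}^a$ nor the emergence of the clean correction $\sum\widetilde{G}_s\sqrt[a]{\Delta K_s}$ from $[\widetilde{Z}^a,K]$ is justified; you flag this as ``the main obstacle,'' and it is exactly where the proposal stops being a proof. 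The paper avoids the martingale entirely: it expands $\widetilde{\cal E}_{-}^a\is K=K\widetilde{\cal E}^a+K_{-}\widetilde{\cal E}_{-}^a\is\widetilde{V}^{(a)}+\Delta K\,\widetilde{\cal E}_{-}^a\is\widetilde{V}^{(a)}$ using $\widetilde{\cal E}^a={\cal E}(-\widetilde{V}^{(a)})$ with $\widetilde{V}^{(a)}$ from (\ref{Vepsilon}), so that \emph{every} term is nonnegative and monotone (this rests on Lemma \ref{Lemma4.11}-(b)); the terminal term is converted to $E^{\widetilde Q}[K_{T\wedge\tau}^{1/a}]$ via $\widetilde{\cal E}=G\widetilde Z/G_0$ as in (a), while the two correction terms are controlled through the conditional-increment estimates of Lemma \ref{Lemma4.11} combined with the Garsia-type Th\'eor\`eme 99 of Dellacherie--Meyer, with a case split $a\geq 1$ versus $a<1$ (the first case using $1-(1-x)^a\leq\max(a,1)x$ pathwise, the second the conditional bound $E[\widetilde{V}^{(a)}_{T\wedge\tau}-\widetilde{V}^{(a)}_{s-}\,\vert\,{\cal G}_s]\leq 1$). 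Your accounting of where $3^{1/a}$ comes from (the three-term split) is right, but the $\max(a,a^{-1})^{1/a}$ arises from this jump estimate and the Th\'eor\`eme 99 step, not from recombining powers. To repair the proposal you would have to replace the $\widetilde{Z}^a$-based expansion by the monotone $\widetilde{\cal E}^a$-based one, at which point you are reproducing the paper's proof.
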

To simplify the exposition, we relegate the proof of the lemma to Appendix \ref{Appendix4Proofs}. Below, we give priori estimates for the solution of  (\ref{RBSDEG}) under $P$.  
  \begin{theorem}\label{estimates}    Let $p>1$, and consider $\widetilde{\cal E}$ and $\Delta_{\widetilde{Q}}(\xi,f,S^+)$ given by (\ref{EpsilonTilde}) and (\ref{Delta(xi,f,S)}) respectively. Then there exists $\widetilde{C}\in(0,\infty)$, which depends on $p$ only, such that the $(\mathbb{G},\widetilde{Q})$-solution ($Y^{\mathbb{G}},Z^{\mathbb{G}},K^{\mathbb{G}}, M^{\mathbb{G}}$) to (\ref{RBSDEG}) satisfies  
\begin{equation*}
\norm{\norm{\left(\sqrt[p]{\widetilde{\cal E}}{Y}^{\mathbb{G}},\sqrt[p]{\widetilde{\cal E}_{-}}Z^{\mathbb{G}},\sqrt[p]{\widetilde{\cal E}_{-}}\is{M}^{\mathbb{G}} ,\sqrt[p]{\widetilde{\cal E}_{-}}\is{ K}^{\mathbb{G}}\right)}}_{(P,\mathbb{G},p)}\leq\widetilde{C} \Delta_{\widetilde{Q}}(\xi,f,S^+).
\end{equation*}
\end{theorem}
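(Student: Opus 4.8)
The plan is to transport the $\widetilde{Q}$-norm bound of Theorem~\ref{EstimatesUnderQtilde} into the $\widetilde{\cal E}$-weighted $P$-norm, one component at a time, via the four transfer inequalities of Lemma~\ref{technicallemma1}. First I would dispose of the trivial case $\Delta_{\widetilde{Q}}(\xi,f,S^+)=\infty$; otherwise $\Delta_{\widetilde{Q}}<\infty$ together with $p>1$ forces (\ref{MainAssumption}), so Theorem~\ref{abcde} applies and the $(\mathbb{G},\widetilde{Q})$-solution is the unique one satisfying (\ref{secondrelation}). In particular $M^{\mathbb{G}}=\psi\is(N^{\mathbb{G}})^T$ with $\psi:=h-Y^{\mathbb{F}}/\widetilde{\cal E}$ an \emph{$\mathbb{F}$-optional} process, a fact I will need for the martingale part. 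By (\ref{Norm4(Y,Z,K,M)}) the target norm is the sum of its four summands, so it suffices to bound each and then add.

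The three ``diagonal'' components close immediately. For $Y$ one has $\Vert\sqrt[p]{\widetilde{\cal E}}\,Y^{\mathbb{G}}\Vert_{\mathbb{D}_{T\wedge\tau}(P,p)}^p=E[\sup_{s\le T\wedge\tau}\widetilde{\cal E}_s|Y^{\mathbb{G}}_s|^p]\le G_0^{-1}\Vert Y^{\mathbb{G}}\Vert_{\mathbb{D}_{T\wedge\tau}(\widetilde{Q},p)}^p$ by Lemma~\ref{technicallemma1}(a). For $Z$ I would apply Lemma~\ref{technicallemma1}(b) with $a=2/p$ and $dK_s=(Z^{\mathbb{G}}_s)^2ds$; this $K$ is continuous, so its jump term vanishes and $\Vert\sqrt[p]{\widetilde{\cal E}_-}Z^{\mathbb{G}}\Vert_{\mathbb{S}_{T\wedge\tau}(P,p)}^p\le G_0^{-1}\kappa(2/p)\Vert Z^{\mathbb{G}}\Vert_{\mathbb{S}_{T\wedge\tau}(\widetilde{Q},p)}^p$. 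For $K$ I would use the same part with $a=1/p$ and $K=K^{\mathbb{G}}$; here the jump correction $\sum\widetilde{G}_s(\Delta K^{\mathbb{G}}_s)^p$ is absorbed using $\widetilde{G}\le1$ and the elementary superadditivity $\sum a_i^p\le(\sum a_i)^p$ for $p\ge1$, giving $\le 2G_0^{-1}\kappa(1/p)\Vert K^{\mathbb{G}}\Vert_{\mathcal{A}_{T\wedge\tau}(\widetilde{Q},p)}^p$. In all three cases Theorem~\ref{EstimatesUnderQtilde} bounds the right-hand side by $(C\Delta_{\widetilde{Q}}(\xi,f,S^+))^p$.

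The martingale component is the crux. Since $[\sqrt[p]{\widetilde{\cal E}_-}\is M^{\mathbb{G}}]=\widetilde{\cal E}_-^{2/p}\psi^2\is[N^{\mathbb{G}},N^{\mathbb{G}}]^T$, I would invoke the $\mathbb{F}$-optional inequality Lemma~\ref{technicallemma1}(d) with $H=\psi^2$. Its first term equals $\Vert M^{\mathbb{G}}\Vert_{\mathcal{M}^p(\widetilde{Q})}^p$ and is handled by Theorem~\ref{EstimatesUnderQtilde}. The residual term $2E^{\widetilde{Q}}[(|\psi|^pI_{\Rbrack0,\tau\Lbrack}\is D^{o,\mathbb{F}})_T]$ requires genuine work: on $\Rbrack0,\tau\Lbrack$ the representation (\ref{secondrelation}) gives $\psi=h-Y^{\mathbb{G}}$, so I would split $|\psi|^p\le2^{p-1}(|h|^p+|Y^{\mathbb{G}}|^p)$. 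The $|Y^{\mathbb{G}}|^p$-piece is absorbed by a Garsia--Neveu-type argument built on the $\widetilde{Q}$-conditional control of Lemma~\ref{Lemma4.11}(a) (the future $D^{o,\mathbb{F}}$-increment has $\mathcal{G}_t$-conditional $\widetilde{Q}$-mean $\le\widetilde{G}_{t\wedge\tau}\le1$), followed by Doob's inequality, yielding a multiple of $\Vert Y^{\mathbb{G}}\Vert_{\mathbb{D}_{T\wedge\tau}(\widetilde{Q},p)}^p$. The $|h|^p$-piece is where the $\mathbb{F}$-optionality of $h$ is decisive: using $\widetilde{G}\le1$ and the compensator identity $N^{\mathbb{G}}=D-\widetilde{G}^{-1}I_{\Rbrack0,\tau\Rbrack}\is D^{o,\mathbb{F}}$ of Theorem~\ref{Toperator}(b), the $D^{o,\mathbb{F}}$-integral of the optional $|h|^p$ collapses to its value at $\tau$, i.e. to $E^{\widetilde{Q}}[|h_\tau|^pI_{\{\tau\le T\}}]=E^{\widetilde{Q}}[|\xi|^p]\le\Delta_{\widetilde{Q}}^p$.

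The main obstacle is precisely this residual term: a priori it depends on the whole trajectory of $h$ on $\Rbrack0,\tau\Lbrack$, which is \emph{not} encoded in the data $\Delta_{\widetilde{Q}}(\xi,f,S^+)$, and only the $\mathbb{F}$-optionality of $h$ together with the ``evaluation-at-$\tau$'' effect of the $\mathbb{G}$-compensator reduces it to the single datum $\xi=h_\tau$; likewise Lemma~\ref{Lemma4.11}(a) is the one place where the particular change of measure $\widetilde{Q}$ enters quantitatively and lets the $Y^{\mathbb{G}}$-piece be reabsorbed. Once the four components are each bounded by a constant depending only on $p$ and $G_0$ times $\Delta_{\widetilde{Q}}(\xi,f,S^+)$, I would combine them with Young's inequality exactly as in Part~2 of the proof of Theorem~\ref{EstimatesUnderQtilde}, take $p$-th roots, and collect all constants into a single $\widetilde{C}$. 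As there, crucially, none of the constants depends on the horizon $T$, which is what renders the estimate serviceable for the unbounded-horizon passage $T\to\infty$.
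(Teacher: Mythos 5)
Your proposal is correct and follows essentially the same route as the paper's own proof: Lemma~\ref{technicallemma1}(a), (b) (with $a=2/p$ for $Z^{\mathbb{G}}$ and $a=1/p$ for $K^{\mathbb{G}}$), and (d) for the four components, the representation $[M^{\mathbb{G}},M^{\mathbb{G}}]=H\is[N^{\mathbb{G}},N^{\mathbb{G}}]$ with $H=(h-Y^{\mathbb{F}}/\widetilde{\cal E})^2$ from Theorem~\ref{abcde}(b), the split $H^{p/2}\leq 2^{p-1}(\vert h\vert^p+\vert Y^{\mathbb{G}}\vert^p)$ with Lemma~\ref{Lemma4.11} and $h_{\tau}I_{\{\tau\leq T\}}=\xi I_{\{\tau\leq T\}}$ to absorb the residual $D^{o,\mathbb{F}}$-term, and Theorem~\ref{EstimatesUnderQtilde} to close. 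Your treatment of the residual term is in fact slightly more explicit than the paper's (which compresses it into one citation of Lemma~\ref{Lemma4.11}-(b)), and your remark that the constant carries a $G_0$-dependence through the transfer inequalities matches what the paper's own proof produces.
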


\begin{proof}  
An application of Lemma \ref{technicallemma1}-(b), to $(dK,a):=((Z^{\mathbb G})^{2}ds,{{2/p}})$, yields
 \begin{equation}\label{Estimate4ZGepsilon}
     E\left[\left(\int_0^{T\wedge\tau} ({\widetilde{\cal E}}_{s-})^{2/p}(Z^{\mathbb G}_{s})^{2}ds\right)^{{{p}\over{2}}}\right]\leq{{\kappa({{2}\over{p}})}\over{G_0}} E^{\widetilde{Q}}\left[\left(\int_0^{T\wedge\tau}
     (Z^{\mathbb G}_{s})^{2}ds\right)^{{{p}\over{2}}}\right].\end{equation}
By applying Lemma \ref{technicallemma1}-(a) to the process $Y=Y^{\mathbb G}$, we obtain
  \begin{eqnarray}\label{Estimate4YGepsilon}
     E\left[\sup_{0\leq s\leq{T\wedge\tau}}{\widetilde{\cal E}}_s\vert{Y_s^{\mathbb G}}\vert^p\right]\leq {G_0^{-1} }E^{\widetilde{Q}}\left[\sup_{0\leq s\leq{T\wedge\tau}}\vert{Y_s^{\mathbb G}}\vert^p\right]. \end{eqnarray}
Then we apply Lemma \ref{technicallemma1}-(b) to the pair $(K, a):=(K^{\mathbb G},1/p)$,  we use afterwards the fact that $\sum_{0< s\leq _{T\wedge\tau}} \widetilde{G}_s(\Delta K_s^{\mathbb G})^p\leq (K_{T\wedge\tau}^{\mathbb G})^p$, and derive
 \begin{equation}\label{Estimate4KGepsilon}
 \begin{split}
     E\left[\left(\int_0^{T\wedge\tau} ({\widetilde{\cal E}}_{s-})^{1/p}dK_s^{\mathbb G}\right)^p\right]&\leq{{\kappa(1/p)}\over{ G_0}} E^{\widetilde{Q}}\left[(K_{T\wedge\tau}^{\mathbb G})^p+\sum_{0\leq s\leq _{T\wedge\tau}} \widetilde{G}_s(\Delta K_s^{\mathbb G})^p\right]\\
     &\leq {{2\kappa(1/p)}\over{ G_0}}  E^{\widetilde{Q}}\left[(K_{T\wedge\tau}^{\mathbb G})^p\right].\end{split}\end{equation}
Thanks to Theorem \ref{abcde}-(b) --see (\ref{secondrelation})--, we have $[M^{\mathbb G}, M^{\mathbb G}]=H\is [N^{\mathbb G}, N^{\mathbb G}]$ with $H:=(h-Y^{\mathbb F}{\widetilde {\cal E}}^{-1})^2$ being a nonnegative and $\mathbb F$-optional process. Thus, a direct application of Lemma  \ref{technicallemma1}-(d) yields \\
 \begin{equation}\label{Equality4MG00}
 \begin{split}
 E\left[({\widetilde{\cal E}}_{-}^{2/p}H\is [N^{\mathbb G},N^{\mathbb G}])_{T\wedge\tau} ^{p/2}\right]\leq {{\kappa(2/p)}\over{G_0}}  E^{\widetilde{Q}}\left[(H\is [N^{\mathbb G},N^{\mathbb G}]_{T\wedge\tau})^{p/2}+ 2(H^{p/2}I_{\Rbrack0,\tau\Lbrack}\is{D}^{o,\mathbb F})_T\right].\end{split}
     \end{equation}
Furthermore, thanks to $(H^{p/2}I_{\Rbrack0,\tau\Lbrack}\is{D}^{o,\mathbb F})\leq 2^{p-1}(\vert{h}\vert^p+\vert{Y}^{\mathbb G}\vert^p)I_{\Rbrack0,\tau\Lbrack}\is{D}^{o,\mathbb F}$ and  Lemma \ref{Lemma4.11}-(b), we derive 
\begin{eqnarray*}
2E^{\widetilde{Q}}\left[(H^{p/2}I_{\Rbrack0,\tau\Lbrack}\is{D}^{o,\mathbb F})_T\right]\leq 2^pE^{\widetilde{Q}}\left[\vert{h}_{\tau}\vert^p I_{\{\tau\leq{T}\}}\right]+2^pE^{\widetilde{Q}}\left[\sup_{0\leq{t}\leq\tau\wedge{T}}\vert{Y}^{\mathbb G}_t\vert^p\right].
\end{eqnarray*}
 Therefore, by combining this inequality with ${h}_{\tau}I_{\{\tau\leq{T}\}}=\xi I_{\{\tau\leq{T}\}}$, (\ref{Equality4MG00}), (\ref{Estimate4KGepsilon}), (\ref{Estimate4YGepsilon}), (\ref{Estimate4ZGepsilon}) and Theorem \ref{EstimatesUnderQtilde}, the proof of the theorem follows immediately.
\end{proof}

Similarly, the following theorem gives a version of Theorem \ref{EstimatesUnderQtilde1} where the left-hand-side of its estimate does not involve the probability $\widetilde Q$. 

      \begin{theorem}\label{estimates1} 
       Let ($Y^{\mathbb{G},i},Z^{\mathbb{G},i},K^{\mathbb{G},i}, M^{\mathbb{G},i}$)  be a  solution to the RBSDE (\ref{RBSDEG}) corresponding to  $(f^{(i)}, S^{(i)}, \xi^{(i)})$,  for each $i=1,2$. Then there exist $\widetilde{C}_1$ and $\widetilde{C}_2$ that depend on $p$ only such that
  \begin{align*}
 & \norm{\norm{\left(\sqrt[p]{\widetilde{\cal E}}\delta{Y}^{\mathbb{G}},\sqrt[p]{\widetilde{\cal E}_{-}}\delta{Z}^{\mathbb{G}},\sqrt[p]{\widetilde{\cal E}_{-}}\is\delta{M}^{\mathbb{G}} ,0\right)}}_{(P,\mathbb{G},p)}\\
  &\leq\widetilde{C}_1\Delta_{\widetilde{Q}}(\delta\xi,\delta{f},\delta{S})
+\widetilde{C}_2\Vert  \delta S\Vert_{\mathbb{D}_{T\wedge\tau}(\widetilde{Q},p)}^{1/2}\sqrt{\sum_{i=1}^2 \Delta_{\widetilde{Q}}(\xi^{(i)},f^{(i)},(S^{(i)})^+)}.\end{align*}
Here $\Delta_{\widetilde{Q}}(\xi^{(i)},f^{(i)},(S^{(i)})^+)$ for $i=1,2$ and  $\Delta_{\widetilde{Q}}(\delta\xi,\delta{f},\delta{S})$ are given via (\ref{Delta(xi,f,S)}), while $(\delta Y^{\mathbb G},\delta Z^{\mathbb G},\delta M^{\mathbb G}, \delta{K}^{\mathbb{G}})$ and $(\delta\xi,\delta{f}, \delta{S})$ are defined in (\ref{Delta4Solution}).   \end{theorem}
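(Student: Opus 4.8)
The plan is to reproduce, almost verbatim, the proof of Theorem~\ref{estimates}, carrying it out for the difference quadruplet $(\delta Y^{\mathbb G},\delta Z^{\mathbb G},\delta M^{\mathbb G},\delta K^{\mathbb G})$ instead of a single solution, and to invoke Theorem~\ref{EstimatesUnderQtilde1} in place of Theorem~\ref{EstimatesUnderQtilde} at the very last step. By the definition of the norm in (\ref{Norm4(Y,Z,K,M)}), the left-hand side splits into three pieces (the $K$-component being set to $0$, as explained at the end), and I would dominate each of them separately by transferring from $P$ to $\widetilde Q$ through the corresponding part of Lemma~\ref{technicallemma1}. For the $Y$-piece I would apply Lemma~\ref{technicallemma1}-(a) to $Y:=\delta Y^{\mathbb G}$, giving
\begin{equation*}
\Vert\sqrt[p]{\widetilde{\cal E}}\,\delta Y^{\mathbb G}\Vert_{\mathbb D_{T\wedge\tau}(P,p)}^p
=E\Big[\sup_{0\le s\le T\wedge\tau}\widetilde{\cal E}_s\,\vert\delta Y^{\mathbb G}_s\vert^p\Big]
\le G_0^{-1}\,\Vert\delta Y^{\mathbb G}\Vert_{\mathbb D_{T\wedge\tau}(\widetilde Q,p)}^p,
\end{equation*}
and for the $Z$-piece I would use Lemma~\ref{technicallemma1}-(b) with $(dK,a):=((\delta Z^{\mathbb G})^2\,ds,\,2/p)$; since this $K$ is continuous its jump-sum term drops out, exactly as in (\ref{Estimate4ZGepsilon}), yielding the $P$-norm of $\sqrt[p]{\widetilde{\cal E}_-}\,\delta Z^{\mathbb G}$ bounded by $\kappa(2/p)G_0^{-1}$ times the $\widetilde Q$-norm of $\delta Z^{\mathbb G}$.

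The only genuinely non-routine piece is the martingale term, and this is where I expect the main bookkeeping. Using the explicit representation (\ref{secondrelation}) for each of the two solutions, $M^{\mathbb G,i}=(h^{(i)}-Y^{\mathbb F,i}/\widetilde{\cal E})\is(N^{\mathbb G})^T$, and subtracting, linearity of the stochastic integral gives $\delta M^{\mathbb G}=(\delta h-\delta Y^{\mathbb F}/\widetilde{\cal E})\is(N^{\mathbb G})^T$ with $\delta h:=h^{(1)}-h^{(2)}$; consequently $[\delta M^{\mathbb G},\delta M^{\mathbb G}]=\widetilde H\is[N^{\mathbb G},N^{\mathbb G}]$ for the \emph{$\mathbb F$-optional} integrand $\widetilde H:=(\delta h-\delta Y^{\mathbb F}/\widetilde{\cal E})^2$, which is precisely what permits the use of the $\mathbb F$-optional version Lemma~\ref{technicallemma1}-(d). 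The extra $D^{o,\mathbb F}$ term produced by (d) is then handled exactly as in the proof of Theorem~\ref{estimates}: on $\Lbrack0,T\wedge\tau\Lbrack$ one has $Y^{\mathbb F}/\widetilde{\cal E}=Y^{\mathbb G}$, so $\widetilde H^{p/2}=\vert\delta h-\delta Y^{\mathbb G}\vert^p\le 2^{p-1}(\vert\delta h\vert^p+\vert\delta Y^{\mathbb G}\vert^p)$ on $\Rbrack0,\tau\Lbrack$, and Lemma~\ref{Lemma4.11}-(b) together with $\delta h_\tau I_{\{\tau\le T\}}=\delta\xi\,I_{\{\tau\le T\}}$ converts the $D^{o,\mathbb F}$-integral into a bound by $E^{\widetilde Q}[\vert\delta\xi\vert^p I_{\{\tau\le T\}}]+E^{\widetilde Q}[\sup_{t\le T\wedge\tau}\vert\delta Y^{\mathbb G}_t\vert^p]$.

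Assembling the three estimates, the left-hand side is dominated by a $p$-dependent constant times the $(\widetilde Q,\mathbb G,p)$-norms of $(\delta Y^{\mathbb G},\delta Z^{\mathbb G},\delta M^{\mathbb G})$ together with $\Vert\delta\xi\Vert_{L^p(\widetilde Q)}$; applying Theorem~\ref{EstimatesUnderQtilde1} to these $\widetilde Q$-quantities then produces the asserted right-hand side, with $\widetilde C_1,\widetilde C_2$ depending on $p$ only. Concerning the vanishing $K$-slot: since $\delta K^{\mathbb G}$ is a difference of two nondecreasing processes it fails to be monotone, so Lemma~\ref{technicallemma1}-(b) is inapplicable to it, and, moreover, Theorem~\ref{EstimatesUnderQtilde1} supplies no control on $\delta K^{\mathbb G}$ that could be propagated; hence no weighted $P$-estimate of $\delta K^{\mathbb G}$ is available and it is simply omitted from the norm. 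The decisive point throughout is that the integrand $\widetilde H$ in the representation of $\delta M^{\mathbb G}$ is $\mathbb F$-optional, so that part (d) rather than part (c) of Lemma~\ref{technicallemma1} applies and the $D^{o,\mathbb F}$-remainder can be reduced to terms already controlled by Theorem~\ref{EstimatesUnderQtilde1}.
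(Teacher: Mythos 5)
Your proposal is correct and follows essentially the same route as the paper's proof: Lemma \ref{technicallemma1}-(a) for the $\delta Y^{\mathbb G}$-term, Lemma \ref{technicallemma1}-(b) for the weighted quadratic terms, the representation $\delta M^{\mathbb G}=(\delta h-\delta Y^{\mathbb F}/\widetilde{\cal E})\is (N^{\mathbb G})^T$ combined with $\delta Y^{\mathbb F}/\widetilde{\cal E}=\delta Y^{\mathbb G}$ on $\Lbrack 0,T\wedge\tau\Lbrack$ and Lemma \ref{Lemma4.11}-(b) to absorb the $D^{o,\mathbb F}$-remainder, and Theorem \ref{EstimatesUnderQtilde1} at the end. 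The only (immaterial) organizational difference is that the paper applies assertion (b) once to the combined nondecreasing process $\int_0^{\cdot}(\delta Z^{\mathbb G}_s)^2ds+[\delta M^{\mathbb G},\delta M^{\mathbb G}]$ and then re-runs the argument of parts 3--4 of the lemma's proof to treat the jump sum, whereas you invoke the packaged assertion (d) directly for the martingale part and (b) separately for the $\delta Z^{\mathbb G}$-part, which yields the same bounds up to constants.
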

\begin{proof}
By applying Lemma \ref{technicallemma1}-(a) to $Y:=\delta{Y}^{\mathbb G}$, we deduce that 
\begin{equation}\label{Control4deltaYGInfinity}
E\left[\sup_{0\leq t\leq T}\widetilde{\cal E}_t\vert\delta Y^{\mathbb{G}}_{t}\vert^p\right]\leq G_0^{-1} E^{\widetilde{Q}}\left[\sup_{0\leq t\leq T}\vert\delta Y^{\mathbb{G}}_{t}\vert^p\right].\end{equation}
An application of Lemma \ref{technicallemma1}-(b) to $K=\int_0^{\cdot }(\delta{Z}^{\mathbb G}_s)^2ds +[\delta{M}^{\mathbb G},\delta{M}^{\mathbb G}]$ with $a=2/p$ implies that  
\begin{equation}\label{Control4deltaZGInfinity}
\begin{split}
&E\left[\left(\int_0^{T\wedge\tau}(\widetilde{\cal E}_{s-})^{2/p}(\delta Z^{\mathbb{G}}_s)^2 ds+\int_0^{T\wedge\tau}(\widetilde{\cal E}_{s-})^{2/p}d[\delta{M}^{\mathbb G},\delta{M}^{\mathbb G}]_s\right)^{p/2}\right]\\
&\leq\kappa E^{\widetilde{Q}}\left [\left(\int_0^{T\wedge\tau}(\delta Z^{\mathbb{G}}_s)^2 ds+[\delta{M}^{\mathbb G},\delta{M}^{\mathbb G}]_{T\wedge\tau}\right)^{{{p}\over{2}}}+\sum_{0\leq{t}\leq {T\wedge\tau}}
{\widetilde{G}_t}\vert\Delta(\delta{M}^{\mathbb G})_t\vert^p\right].\end{split}
\end{equation}
Then by using $\Delta(\delta{M}^{\mathbb G})=(\delta{h}-\delta{Y}^{\mathbb F}{\widetilde{\cal E}}^{-1})\Delta N^{\mathbb G}=:H\Delta N^{\mathbb G}$ and ${\widetilde{\cal E}}^{-1}\delta{Y}^{\mathbb F}=\delta{Y}^{\mathbb G}$ on ${\Rbrack0,\tau\Lbrack}$ --see  Theorem \ref{abcde} --, and by mimicking parts 3 and 4 in the proof of Lemma \ref{technicallemma1}, we derive
\begin{equation}\label{Control4deltaMGInfinity}
\begin{split}
 E^{\widetilde{Q}}\left [\sum_{0\leq{t}\leq {T\wedge\tau}}{\widetilde{G}_t}\vert\Delta(\delta{M}^{\mathbb G})_t\vert^p\right]&\leq  E^{\widetilde{Q}}\left [\widetilde{G}\vert{H}\vert^p\is\mbox{Var}(N^{\mathbb G})_T\right]=2E^{\widetilde{Q}}\left [\vert{H}\vert^pI_{\Rbrack0,\tau\Lbrack}\is{D}^{o,\mathbb F}_T\right]\\
 &\leq2^p E^{\widetilde{Q}}\left [(\vert{\delta{h}}\vert^p+\vert\delta{Y}^{\mathbb G}\vert^p)I_{\Rbrack0,\tau\Lbrack}\is {D}^{o,\mathbb F}_T\right]\\
 &\leq 2^pE^{\widetilde{Q}}\left[\vert\delta\xi\vert^p+ \sup_{0\leq{t}\leq{T\wedge\tau}}\vert \delta{Y}^{\mathbb G}\vert^p\right].\end{split}
\end{equation}
The last inequality follows from  Lemma \ref{Lemma4.11}-(b). Therefore, by combining (\ref{Control4deltaYGInfinity}), (\ref{Control4deltaZGInfinity}), (\ref{Control4deltaMGInfinity}) and Theorem \ref{EstimatesUnderQtilde1}, the proof of the theorem follows immediately. This ends the proof of the theorem.
\end{proof}     
 {\it{Our second step}} in solving (\ref{RBSDEGinfinite}) relies on the following lemma, and focuses on letting $T$ to go to infinity in order to get rid-off $\widetilde{Q}$ in the norms of the data-triplet. This {\it naturally} leads us to define the appropriate norm and space for the data-triplet of this RBSDE (\ref{RBSDEGinfinite}).
     \begin{lemma}\label{ExpecationQtilde2P} Let $X$ be a non-negative and $\mathbb F$-optional process such that $X_0=0$ and $X/{\cal E}(G_{-}^{-1}\cdot m)$ is bounded. Then
     \begin{eqnarray}
     \lim_{T\to\infty} E^{\widetilde Q}[X_{T\wedge\tau}]=G_0\Vert X\Vert_{L^1(P\otimes V^{\mathbb F})}:=G_0 E\left[\int_0^{\infty} X_sdV_s^{\mathbb F}\right].\end{eqnarray}
     \end{lemma}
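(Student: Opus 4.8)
The plan is to establish the limit by applying the monotone convergence theorem to reduce the problem to computing $E^{\widetilde{Q}}[X_{T \wedge \tau}]$ in closed form, and then passing to the limit. The key tool is Lemma \ref{ExpecationQtilde2Pbis}-(a), which already gives the exact identity for finite $T$:
\begin{equation*}
E^{\widetilde{Q}}[X_{T \wedge \tau}] = E\left[G_0 \int_0^T X_s \, dV_s^{\mathbb{F}} + G_0 X_T \widetilde{\cal E}_T + X_0(1 - G_0)\right].
\end{equation*}
Since the hypothesis gives $X_0 = 0$, the last term vanishes, leaving two contributions to control as $T \to \infty$.

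First I would handle the integral term $E\left[G_0 \int_0^T X_s \, dV_s^{\mathbb{F}}\right]$. Recalling that $V^{\mathbb{F}} = 1 - \widetilde{\cal E}$ (from (\ref{ProcessVFandXiF})) and that $\widetilde{\cal E} = {\cal E}(-\widetilde{G}^{-1} \is D^{o,\mathbb{F}})$ is nonincreasing (being a stochastic exponential of a nonincreasing process), the process $V^{\mathbb{F}}$ is nondecreasing. Hence $X \is V^{\mathbb{F}}$ is a nondecreasing process in $T$ whenever $X \geq 0$, and the monotone convergence theorem applies directly to yield
\begin{equation*}
\lim_{T \to \infty} E\left[G_0 \int_0^T X_s \, dV_s^{\mathbb{F}}\right] = G_0 E\left[\int_0^{\infty} X_s \, dV_s^{\mathbb{F}}\right] = G_0 \Vert X \Vert_{L^1(P \otimes V^{\mathbb{F}})}.
\end{equation*}
This is precisely the desired right-hand side, so the entire task reduces to showing that the remaining boundary term $E[G_0 X_T \widetilde{\cal E}_T]$ vanishes in the limit.

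The main obstacle is therefore controlling $\lim_{T \to \infty} E[G_0 X_T \widetilde{\cal E}_T]$ and proving it is zero. Here I would exploit the boundedness hypothesis: writing $X / {\cal E}(G_{-}^{-1} \is m) \leq c$ for some constant $c$ and recalling from (\ref{Ztilde}) that $\widetilde{Z} = 1/{\cal E}(G_{-}^{-1} \is m)$, we have $X_T \widetilde{\cal E}_T \leq c \, {\cal E}_T(G_{-}^{-1} \is m) \widetilde{\cal E}_T$. By the multiplicative decomposition $G = G_0 {\cal E}(G_{-}^{-1} \is m) \widetilde{\cal E}$ of Lemma \ref{Decomposition4G}, this product equals $c \, G_T / G_0$, so $E[G_0 X_T \widetilde{\cal E}_T] \leq c \, E[G_T] = c \, P(\tau > T)$. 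Since $\tau < \infty$ $P$-a.s.\ by assumption (\ref{Assumption4Tau}), we have $P(\tau > T) \to 0$ as $T \to \infty$, and the boundary term vanishes. Combining this with the monotone convergence step completes the proof. The delicate point to verify carefully is the interchange of the bound with the expectation and the precise identification of the product $\,{\cal E}(G_{-}^{-1}\is m)\,\widetilde{\cal E}\,$ via the decomposition of $G$, which is where the hypotheses on $X$ and on $\tau$ are genuinely used.
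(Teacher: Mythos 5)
Your proposal is correct and follows essentially the same route as the paper's own proof: both start from the exact identity of Lemma \ref{ExpecationQtilde2Pbis}-(a) with $X_0=0$, dispose of the integral term by monotone convergence, and kill the boundary term $E[G_0X_T\widetilde{\cal E}_T]$ via the multiplicative decomposition $G=G_0\,{\cal E}(G_{-}^{-1}\is m)\,\widetilde{\cal E}$ together with $\tau<\infty$ $P$-a.s. The only (harmless) variation is at the very end: the paper bounds $G_0X_T\widetilde{\cal E}_T=G_TX_T/{\cal E}_T(G_{-}^{-1}\is m)\leq CG_T\to 0$ pointwise and invokes dominated convergence, whereas you take expectations of the same bound directly, using $E[G_T]=P(\tau>T)\to 0$, which is a slightly more elementary finish of the identical idea.
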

The proof of this lemma is relegated to Appendix \ref{Appendix4Proofs}. It is clear that this lemma allows us to take the limit of $\widetilde Q$-expectations, under some conditions. More importantly, on the one hand, this leads {\it naturally} to the space $$L^p(\Omega\times [0,+\infty), {\cal{F}}\otimes{\cal{B}}(\mathbb R^+),P\otimes{V}^{\mathbb F})$$ for the data-triplet $(f,h,S)$, endowed with its norm defined by
\begin{equation}\label{Lp(PVF)}
\Vert{X}\Vert_{L^p(P\otimes{V}^{\mathbb{F}})}^p:=E\left[\int_0^{\infty}\vert{X}_t\vert^p dV^{\mathbb{F}}_t\right],\quad\mbox{ for any $ {\cal{F}}\otimes{\cal{B}}(\mathbb R^+)$-measurable $X$}.\end{equation}
On the other hand, the pair $(Y^{\mathbb{G}},Z^{\mathbb{G}})$ in the solution of (\ref{RBSDEG}) belongs to $\widetilde{\mathbb{D}}_{\sigma}(P,p)\times \widetilde{\mathbb{S}}_{\sigma}(P,p)$, with $\sigma=T\wedge\tau$. The two spaces, which appear {\it naturally} in our analysis, are given as follows: $(Y,Z)$ belongs to $\widetilde{\mathbb{D}}_{\sigma}(P,p)\times \widetilde{\mathbb{S}}_{\sigma}(P,p)$ iff $(Y\sqrt[p]{{\widetilde{\cal E}}},Z\sqrt[p]{{\widetilde{\cal E}}_{-}})\in  {\mathbb{D}}_{\sigma}(P,p)\times{\mathbb{S}}_{\sigma}(P,p)$, and  
\begin{equation}\label{DtildeSpace}
 \Vert{Y}\Vert_{ \widetilde{\mathbb{D}}_{\sigma}(P,p)}:= \Vert{Y}\sqrt[p]{\widetilde{\cal E}}\Vert_{ {\mathbb{D}}_{\sigma}(P,p)}\quad\mbox{and}\quad \Vert{Z}\Vert_{ \widetilde{\mathbb{S}}_{\sigma}(P,p)}:= \Vert{Z}\sqrt[p]{\widetilde{\cal E}_{-}}\Vert_{ {\mathbb{S}}_{\sigma}(P,p)}.
 \end{equation}
  Similarly, for the pair  $(K^{\mathbb{G}},M^{\mathbb{G}})$ in the solution of (\ref{RBSDEG}), we take the norm under $P$ of the ``discounted'' pair $(\sqrt[p]{\widetilde{\cal E}_{-}}\is{K}^{\mathbb{G}},\sqrt[p]{\widetilde{\cal E}_{-}}\is{M}^{\mathbb{G}})$ instead. Below, we elaborate our principal result of this subsection.

\begin{theorem}\label{EstimateInfinite} Let $p\in (1,+\infty)$, and suppose $G>0$ and $(f, S, h)$ satisfies 
\begin{eqnarray}\label{MainAssumption4InfiniteHorizon}
\Delta_{P\otimes{V}^{\mathbb F}}(f,h,S):=\norm{\int_0^{\cdot}\vert f(s)\vert ds+\vert{h}\vert+\sup_{0\leq{u}\leq\cdot}\vert{S}_u\vert}_{L^p(P\otimes{V}^{\mathbb F})}<\infty.
\end{eqnarray}
Then the following assertions hold.\\
{\rm{(a)}} The RBSDE (\ref{RBSDEGinfinite}) admits a unique solution $\left(Y^{\mathbb{G}},Z^{\mathbb{G}},K^{\mathbb{G}},M^{\mathbb{G}}\right)$. \\
{\rm{(b)}} There exists a positive constant $C$, that depends on $p$ only, such that 
\begin{eqnarray*}
\norm{\norm{\left(\sqrt[p]{\widetilde{\cal E}}{Y}^{\mathbb{G}},\sqrt[p]{\widetilde{\cal E}_{-}}{Z}^{\mathbb{G}},\sqrt[p]{\widetilde{\cal E}_{-}}\is{M}^{\mathbb{G}} ,\sqrt[p]{\widetilde{\cal E}_{-}}\is{K}^{\mathbb{G}}\right)}}_{(P,\mathbb{G},p)}\leq C\Delta_{P\otimes{V}^{\mathbb F}}(f,h,S^+).
\end{eqnarray*}
{\rm{(c)}} Consider two triplets $(f^{(i)}, S^{(i)}, h^{(i)})$, $i=1,2$ satisfying (\ref{MainAssumption4InfiniteHorizon}). If the solution to (\ref{RBSDEGinfinite}) associated with $(f^{(i)}, S^{(i)}, h^{(i)})$ is $\left(Y^{\mathbb{G},i},Z^{\mathbb{G},i},K^{\mathbb{G},i}, M^{\mathbb{G},i}\right)$, then there exist positive $C_1$ and $C_2$ that depend on $p$ only such that 
 \begin{eqnarray*}
 \begin{split}
 &\norm{\norm{\left(\sqrt[p]{\widetilde{\cal E}}\delta{Y}^{\mathbb{G}},\sqrt[p]{\widetilde{\cal E}_{-}}\delta{Z}^{\mathbb{G}},\sqrt[p]{\widetilde{\cal E}_{-}}\is\delta{M}^{\mathbb{G}} ,0\right)}}_{(P,\mathbb{G},p)}\\
&\leq C_1 \Delta_{P\otimes{V}^{\mathbb F}}(\delta{f},\delta{h},\delta{S})+C_2\sqrt{ \Delta_{P\otimes{V}^{\mathbb F}}(0,0,\delta{S})\sum_{i=1}^2 \Delta_{P\otimes{V}^{\mathbb F}}(f^{(i)},h^{(i)},(S^{(i)})^+)}.\end{split}\end{eqnarray*}
Here $\delta{Y}^{\mathbb{G}},\delta{Z}^{\mathbb{G}}, \delta{M}^{\mathbb{G}}, \delta{K}^{\mathbb{G}}$ and $\delta{S}$ are given by (\ref{Delta4Solution}), and 
\begin{eqnarray}\label{processesDelta}
\delta{h}:=h^{(1)}-h^{(2)} ,\ \delta{f}:={f}^{(1)}-{f}^{(2)},\ F^{(i)}:=\int_0^{\cdot}\vert{f}^{(i)}_s\vert{d}s,\ i=1,2.\end{eqnarray}
{\rm{(d)}} Let  ${\widetilde{V}}^{(1/p)}$ be defined in (\ref{Vepsilon}). Then, there exists a unique $L^p(P,\mathbb G)$-solution to 
\begin{equation}\label{EquivalentRBSDE}
 \begin{split}
& dY=-Y\sqrt[p]{{{\widetilde{G}}\over{G}}}I_{\Rbrack0,\tau\Rbrack}d{\widetilde{V}}^{(1/p)}-f(t)\sqrt[p]{\widetilde{\cal E}_{-}}d(t\wedge\tau)-dK-dM+ZdW^{\tau},\\
 &Y_{\tau}=\xi\sqrt[p]{\widetilde{\cal E}_{\tau}},\quad Y\geq S\sqrt[p]{\widetilde{\cal E}}\ \mbox{on}\ \Lbrack0,\tau\Lbrack,\quad \int_0^{\tau}(Y_{u-}- S_{u-}\sqrt[p]{\widetilde{\cal E}_{u-}})dK_u=0,
\end{split}
 \end{equation}
denoted by  $\left(\widetilde{Y}^{\mathbb{G}},\widetilde{Z}^{\mathbb{G}},\widetilde{K}^{\mathbb{G}},\widetilde{M}^{\mathbb{G}}\right)$, and which satisfies
  \begin{equation}\label{Relationship}
  \left(\widetilde{Y}^{\mathbb{G}},\widetilde{Z}^{\mathbb{G}},\widetilde{K}^{\mathbb{G}},\widetilde{M}^{\mathbb{G}}\right)=\left(Y^{\mathbb{G}}\sqrt[p]{\widetilde{\cal E}},Z^{\mathbb{G}}\sqrt[p]{\widetilde{\cal E}_{-}},\sqrt[p]{\widetilde{\cal E}_{-}}\is{K}^{\mathbb{G}},\sqrt[p]{\widetilde{\cal E}_{-}}\is{M}^{\mathbb{G}}\right).\end{equation}
\end{theorem}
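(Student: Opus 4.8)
The plan is to realize the infinite-horizon solution as a limit of the bounded-horizon solutions furnished by Theorem \ref{abcde}, exploiting two features already at our disposal: the left-hand sides of the a priori estimates in Theorems \ref{estimates} and \ref{estimates1} are free of $\widetilde{Q}$, and Lemma \ref{ExpecationQtilde2P} lets us replace the $\widetilde{Q}_T$-norms occurring on the right-hand sides by $L^p(P\otimes V^{\mathbb F})$-norms as $T\to\infty$. Throughout I would use that $V^{\mathbb F}=1-\widetilde{\cal E}$ has total mass at most $1$, so that finiteness of $\Delta_{P\otimes{V}^{\mathbb F}}(f,h,S^+)$ under (\ref{MainAssumption4InfiniteHorizon}) also forces the corresponding $L^1(P\otimes V^{\mathbb F})$-quantities to be finite, which is what is needed to invoke the bounded-horizon theory at each fixed $T$.

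For (a) and (b) I would fix an increasing sequence $T_n\uparrow\infty$ and first check, via the norm comparison established in Part 1 of the proof of Theorem \ref{abcde} together with Lemma \ref{ExpecationQtilde2P}, that (\ref{MainAssumption}) holds for every $T_n$; this produces bounded-horizon solutions $(Y^{n},Z^{n},M^{n},K^{n})$. A uniform bound then comes from Theorem \ref{estimates}, whose right-hand side $\Delta_{\widetilde{Q}_{T_n}}(\xi,f,S^+)$ converges by Lemma \ref{ExpecationQtilde2P}. To upgrade boundedness to convergence I would compare the $T_n$- and $T_m$-solutions for $n<m$: by Remark \ref{Extension4Section4toSigma}-(b) the $T_n$-solution is also a bounded-horizon solution on $\Lbrack0,T_m\wedge\tau\Rbrack$ for the truncated data $(fI_{\Lbrack0,T_n\Rbrack},S^{T_n},\cdot)$, so Theorem \ref{estimates1} bounds the difference by a $\Delta_{\widetilde{Q}_{T_m}}$ of data supported on $\Rbrack T_n,T_m\Rbrack$. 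Lemma \ref{ExpecationQtilde2P} identifies the limit of this tail with integrals of the form $\int_{T_n}^{\infty}(\cdots)\,dV^{\mathbb F}$, which vanish as $n\to\infty$ by (\ref{MainAssumption4InfiniteHorizon}). Hence the discounted quadruplets are Cauchy in the complete $P$-norm $\norm{\norm{\cdot}}_{(P,\mathbb G,p)}$ of the statement; their limit solves (\ref{RBSDEGinfinite}) after passing to the limit in the dynamics, the constraint $Y\geq S$ on $\Lbrack0,\tau\Lbrack$, and the Skorokhod condition, and (b) follows by letting $T\to\infty$ in Theorem \ref{estimates} (Fatou on the left, Lemma \ref{ExpecationQtilde2P} on the right).

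Part (c) would be proved identically, taking $T\to\infty$ in Theorem \ref{estimates1} and applying Lemma \ref{ExpecationQtilde2P} to each term of $\Delta_{\widetilde{Q}_T}$. For (d) I would apply integration by parts to the discounted quadruplet in (\ref{Relationship}): the key computation is that $\sqrt[p]{\widetilde{\cal E}}={\cal E}(-\widetilde{V}^{(1/p)})$, with $\widetilde{V}^{(1/p)}$ the predictable finite-variation process from (\ref{Vepsilon}), so the drift produced by $Y^{\mathbb G}_{-}\,d\sqrt[p]{\widetilde{\cal E}}$ is exactly $-Y\sqrt[p]{\widetilde{\cal E}_{-}}\,d\widetilde{V}^{(1/p)}$, which on $\Rbrack0,\tau\Rbrack$ I would rewrite using $G=G_0{\cal E}(G_{-}^{-1}\is m)\widetilde{\cal E}$ (Lemma \ref{Decomposition4G}) to produce the factor $\sqrt[p]{\widetilde G/G}$ in (\ref{EquivalentRBSDE}); the martingale, barrier and Skorokhod terms transform by the predictable scaling $\sqrt[p]{\widetilde{\cal E}_{-}}$. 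Uniqueness for (\ref{EquivalentRBSDE}) then follows from the bijection (\ref{Relationship}) and the uniqueness proved in (a).

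The main obstacle is the Cauchy step in (a): reconciling the differing terminal times and terminal values $h_{T_n\wedge\tau}$ of the bounded-horizon problems. The device of Remark \ref{Extension4Section4toSigma}-(b) makes two solutions comparable on a common interval, but one must verify carefully that the induced difference data is a genuine tail that is annihilated by (\ref{MainAssumption4InfiniteHorizon}). A secondary technical point is that Lemma \ref{ExpecationQtilde2P} requires $X_0=0$ and $X/{\cal E}(G_{-}^{-1}\is m)$ bounded, so each term of $\Delta_{\widetilde{Q}_T}$ — in particular the supremum term and the terminal term $\vert\xi\vert^p$ — must first be truncated and shifted, with the limit recovered by monotone convergence.
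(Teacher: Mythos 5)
Your overall architecture --- truncate to bounded horizons, invoke Theorem \ref{abcde}, extract uniform bounds from Theorem \ref{estimates}, obtain a Cauchy estimate from Theorem \ref{estimates1} with the help of Remark \ref{Extension4Section4toSigma}-(b), and pass to the limit using Lemma \ref{ExpecationQtilde2P} together with Fatou --- is indeed the paper's scheme, and your computation for assertion (d), namely that $\sqrt[p]{\widetilde{\cal E}}={\cal E}(-\widetilde{V}^{(1/p)})$ so that the predictable scaling produces the drift $Y\sqrt[p]{\widetilde{G}/G}\,I_{\Rbrack0,\tau\Rbrack}\,d\widetilde{V}^{(1/p)}$, is exactly the ``It\^o calculations'' the paper leaves implicit. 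The genuine gap is the point you dismiss as ``secondary'': nothing in your plan makes Lemma \ref{ExpecationQtilde2P}, or even the bounded-horizon theory itself, applicable to the raw data. First, (\ref{MainAssumption4InfiniteHorizon}) does \emph{not} imply (\ref{MainAssumption}) at a fixed deterministic horizon $T_n$: by Lemma \ref{ExpecationQtilde2Pbis}-(a), $E^{\widetilde{Q}}[\vert h_{T\wedge\tau}\vert]$ contains the boundary term $E[G_0\vert h_T\vert\widetilde{\cal E}_T]$, i.e., the value of $h$ at the single time $T$, which the $L^p(P\otimes V^{\mathbb F})$-norm (a $dV^{\mathbb F}$-a.e.\ control only) cannot dominate; so your very first step, ``check that (\ref{MainAssumption}) holds for every $T_n$,'' already fails for general data. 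Second, the hypothesis of Lemma \ref{ExpecationQtilde2P} that $X/{\cal E}(G_{-}^{-1}\is m)$ be bounded is not removable by monotone convergence on truncations inside the expectation: the lemma's proof hinges on $G_0X_T\widetilde{\cal E}_T=G_TX_T/{\cal E}_T(G_{-}^{-1}\is m)\leq CG_T\to 0$, and for untruncated data this boundary term need neither vanish nor be controlled, so neither the identification $\lim_T\Delta_{\widetilde{Q}_T}=G_0^{1/p}\Delta_{P\otimes V^{\mathbb F}}$ nor even a uniform-in-$n$ bound on $\Delta_{\widetilde{Q}_{T_n}}(\xi,f,S^+)$ follows. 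The same difficulty hits your Cauchy step: the terminal mismatch $E^{\widetilde{Q}}[\vert h_{T_n\wedge\tau}-h_{T_m\wedge\tau}\vert^p]$ is not of the form treated by Lemma \ref{ExpecationQtilde2P}, and the paper bounds it by $2^pCP(\tau\wedge\sigma>n)$ only thanks to a pointwise domination of the data.

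What is missing is the paper's two-stage localization, which truncates the \emph{data} and re-solves the RBSDE, rather than truncating inside expectations. Parts 1--2 of the paper's proof run essentially your argument under the additional hypothesis (\ref{BoundednessAssumption}), $\max\bigl(\vert h\vert,\int_0^{\cdot}\vert f_s\vert ds,\sup_{0\leq u\leq\cdot}S_u^+\bigr)\leq C\,{\cal E}(G_{-}^{-1}\is m)^{1/p}$ on $\Lbrack0,\sigma\Rbrack$, under which every application of Lemma \ref{ExpecationQtilde2P} and every terminal-term estimate is legitimate. Parts 3--4 then remove this hypothesis by introducing the stopping times $T_n:=\inf\{t\geq 0:\ (\vert h_t\vert^p+\vert S_t\vert^p+(\int_0^t\vert f(s)\vert ds)^p)/{\cal E}_t(G_{-}^{-1}\is m)>n\}$ and the killed data of (\ref{Consutrction4DataSequence}), $h^{(n)}=hI_{\Lbrack0,T_n\Lbrack}$, $f^{(n)}=fI_{\Lbrack0,T_n\Rbrack}$, $S^{(n)}=SI_{\Lbrack0,T_n\Lbrack}$, $\xi^{(n)}=h_{\tau}I_{\{\tau<T_n\}}$, each of which satisfies (\ref{BoundednessAssumption}) on $\Lbrack0,T_n\Rbrack$; the stability estimate of assertion (c), already established in the localized regime, then controls the differences of the corresponding solutions by tails of the form $I_{\Lbrack T_n,+\infty\Lbrack}(\vert h\vert+F+\sup_{0\leq u\leq\cdot}\vert S_u\vert)$ in $L^p(P\otimes V^{\mathbb F})$, which vanish by dominated convergence under (\ref{MainAssumption4InfiniteHorizon}). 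Note that this is precisely where the full condition with $\sup_{0\leq u\leq\cdot}\vert S_u\vert$, and not merely $S^+$, is used --- $\vert S\vert$ enters the definition of $T_n$ --- a point your proposal leaves unexplained. Without this intermediate layer, your limiting argument cannot get off the ground for data satisfying only (\ref{MainAssumption4InfiniteHorizon}).
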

The proof of the theorem is technically involved. Hence, for the sake of a better exposition, we relegate its proof to Subsection \ref{Subsection4proofTheorem5.5}. \\
It is worth mentioning that Theorem \ref{EstimateInfinite} requires the stronger condition$\Vert\sup_{0\leq{u}\leq\cdot}\vert{S}_u\vert \Vert_{L^p(P\otimes{V}^{\mathbb F})}<\infty$ compared to $\Vert\sup_{0\leq{u}\leq\cdot}{S}_u^+\Vert_{L^p(P\otimes{V}^{\mathbb F})}<\infty$, see (\ref{MainAssumption4InfiniteHorizon}). Importantly, this assumption (\ref{MainAssumption4InfiniteHorizon}) reduces the generality of the theorem and similar results for BSDEs can not be derived from this theorem in contrast to previous theorems. However, our method remains valid for BSDEs, and one can easily adapt our proof to the BSDE setting directly by just ignoring the process $S$ and putting $K^{\mathbb G}=0$ throughout the proof, as they are both irrelevant for the BSDE case. This proves the following 
\begin{theorem}\label{BSDEInfinite} Let $p\in (1,+\infty)$ and $F$ be given by (\ref{MainAssumption4InfiniteHorizon}).  Suppose $G>0$ and $(f, h)$ satisfies 
$\Vert{F}+\vert{h}\vert \Vert_{L^p(P\otimes{V}^{\mathbb F})}<+\infty.$ Then the following hold.\\
{\rm{(a)}} The following BSDE
\begin{eqnarray}\label{BSDEinfinite}
 dY=-f(t)d(t\wedge\tau)-dM+ZdW^{\tau},\quad Y_{\tau}=\xi.
\end{eqnarray}
  admits a unique solution $\left(Y^{\mathbb{G}},Z^{\mathbb{G}},M^{\mathbb{G}}\right)$. \\
{\rm{(b)}} There exists a positive constant $C$, that depends on $p$ only, such that 
\begin{eqnarray*}
\Vert{Y}^{{\mathbb{G}}}\Vert_{\widetilde{\mathbb{D}}_{\tau}(P,p)}+\Vert{Z}^{\mathbb{G}}\Vert_{\widetilde{\mathbb{S}}_{\tau}(P,p)}
+\Vert\sqrt[p]{\widetilde{\cal E}_{-}}\is {M}^{\mathbb{G}}\Vert_{{\cal{M}}^p(P)}\nonumber\leq C \Vert{F}+\vert{h}\vert\Vert_{L^p(P\otimes V^{\mathbb F})}.\end{eqnarray*}
{\rm{(c)}} Consider $(f^{(i)}, h^{(i)})$, $i=1,2$, satisfying $\Vert{F^{(i)}}+\vert{h^{(i)}}\vert \Vert_{L^p(P\otimes{V}^{\mathbb F})}<+\infty.$ If $\left(Y^{\mathbb{G},i},Z^{\mathbb{G},i}, M^{\mathbb{G},i}\right)$ denotes the solution to (\ref{BSDEinfinite}) associated with   $(f^{(i)}, h^{(i)})$ for each $i=1,2$, then there exist positive $C_1$ and $C_2$ that depend on $p$ only such that 
 \begin{eqnarray*}
 \Vert\delta Y^{{\mathbb{G}}}\Vert_{\widetilde{\mathbb{D}}_{\tau}(P,p)}+\Vert \delta Z^{\mathbb{G}}\Vert_{\widetilde{\mathbb{S}}_{\tau}(P,p)}+\Vert\sqrt[p]{\widetilde{\cal E}_{-}}\is\delta M^{\mathbb{G}}\Vert_{{\cal{M}}^p(P)}\leq C_1 \Vert \vert\delta h\vert+\vert{\delta F}\vert\Vert_{L^p(P\otimes V^{\mathbb F})}.
\end{eqnarray*}
Here $\delta{Y}^{\mathbb{G}},\delta{Z}^{\mathbb{G}}$ and $ \delta{M}^{\mathbb{G}}$ are given by (\ref{Delta4Solution}),  $ {F}^{(i)}$ is defined via (\ref{MainAssumption4InfiniteHorizon}), and 
\begin{eqnarray*}
\delta{h}:=h^{(1)}-h^{(2)} ,\ \delta{F}:=\int_0^{\cdot}\vert{f}^{(1)}_s-{f}^{(2)}_s\vert{d}s.\end{eqnarray*}
{\rm{(d)}} Let  ${\widetilde{V}}^{(1/p)}$ be defined in (\ref{Vepsilon}). Then, there exists a unique $L^p(P,\mathbb G)$-solution to 
\begin{eqnarray*}
 dY=-Y\sqrt[p]{{{\widetilde{G}}\over{G}}}I_{\Rbrack0,\tau\Rbrack}d{\widetilde{V}}^{(1/p)}-\sqrt[p]{\widetilde{\cal E}_{-}}f(t)d(t\wedge\tau)-dM+ZdW^{\tau},\ Y_{\tau}=\sqrt[p]{\widetilde{\cal E}_{\tau}}\xi,
 \end{eqnarray*}
denoted by  $\left(\widetilde{Y}^{\mathbb{G}},\widetilde{Z}^{\mathbb{G}},\widetilde{M}^{\mathbb{G}}\right)$ satisfying $$
  \left(\widetilde{Y}^{\mathbb{G}},\widetilde{Z}^{\mathbb{G}},\widetilde{M}^{\mathbb{G}}\right)=\left(Y^{\mathbb{G}}\sqrt[p]{\widetilde{\cal E}},Z^{\mathbb{G}}\sqrt[p]{\widetilde{\cal E}_{-}},\sqrt[p]{\widetilde{\cal E}_{-}}\is{M}^{\mathbb{G}}\right).$$
\end{theorem}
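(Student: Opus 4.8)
The plan is to prove Theorem \ref{BSDEInfinite} by following the proof of Theorem \ref{EstimateInfinite} verbatim, simply suppressing the barrier and the increasing process, which are both vacuous in the unreflected setting. Concretely, I would set $S\equiv-\infty$ (so that no Skorokhod constraint is ever active) and $K^{\mathbb{G}}\equiv 0$ throughout, and then verify that each ingredient used for the reflected case degenerates correctly. The degeneration is clean: the Snell-envelope representation of Lemma \ref{Solution2SnellEnvelop} collapses to the plain conditional expectation $Y^{\mathbb{G}}_t=E^{\widetilde{Q}}[\int_{t\wedge\tau}^{T\wedge\tau}f(s)ds+\xi\mid\mathcal{G}_t]$, since the essential supremum over $\theta\in\mathcal{J}_{t\wedge\tau}^{T\wedge\tau}(\mathbb{G})$ is then attained at $\theta=T\wedge\tau$.

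For the bounded-horizon step I would establish the BSDE analogues of Theorems \ref{EstimatesUnderQtilde} and \ref{EstimatesUnderQtilde1} under $\widetilde{Q}$. Domination of $\vert Y^{\mathbb{G}}\vert$ by a $(\widetilde{Q},\mathbb{G})$-martingale together with Doob's inequality controls $\Vert Y^{\mathbb{G}}\Vert_{\mathbb{D}_{T\wedge\tau}(\widetilde{Q},p)}$, while the BDG inequalities applied to $M^{\mathbb{G}}$ and $Z^{\mathbb{G}}\is W^{\tau}$ control $\Vert\vert (M^{\mathbb{G}},Z^{\mathbb{G}})\Vert\vert_{(\widetilde{Q},\mathbb{G},p)}$; with $K^{\mathbb{G}}=0$ the It\^o-on-$(Y^{\mathbb{G}})^2$ argument of Part 2 of Theorem \ref{EstimatesUnderQtilde} simplifies, as every term carrying $K^{\mathbb{G}}$ and the Skorokhod-based estimate for $\Delta K^{\mathbb{G}}$ vanish. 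I would then remove $\widetilde{Q}$ exactly as in Theorems \ref{estimates} and \ref{estimates1}, applying Lemma \ref{technicallemma1}-(a) to $Y^{\mathbb{G}}$, Lemma \ref{technicallemma1}-(b) to $\int_0^{\cdot}(Z^{\mathbb{G}}_s)^2ds$, and Lemma \ref{technicallemma1}-(d) to $H\is[N^{\mathbb{G}},N^{\mathbb{G}}]$ with $H=(h-Y^{\mathbb{F}}\widetilde{\cal E}^{-1})^2$, using $[M^{\mathbb{G}},M^{\mathbb{G}}]=H\is[N^{\mathbb{G}},N^{\mathbb{G}}]$ inherited from Theorem \ref{abcde}-(b). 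This yields the $P$-side estimates in the discounted norm $\norm{\norm{(\sqrt[p]{\widetilde{\cal E}}Y^{\mathbb{G}},\sqrt[p]{\widetilde{\cal E}_{-}}Z^{\mathbb{G}},\sqrt[p]{\widetilde{\cal E}_{-}}\is M^{\mathbb{G}},0)}}_{(P,\mathbb{G},p)}$ with constants depending only on $p$.

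Finally I would pass to the limit $T\to\infty$ using Lemma \ref{ExpecationQtilde2P}, which converts each $\widetilde{Q}$-expectation of the data into its $L^p(P\otimes V^{\mathbb{F}})$-norm, so that under $\Vert F+\vert h\vert\Vert_{L^p(P\otimes V^{\mathbb{F}})}<\infty$ the finite-horizon solutions form a Cauchy family (via the difference estimate, assertion (c)) in the complete space $\widetilde{\mathbb{D}}_{\tau}(P,p)\times\widetilde{\mathbb{S}}_{\tau}(P,p)\times\dots$, whose limit furnishes the solution and the estimates of (a)--(c). For assertion (d) I would carry out the same change of variable $(\widetilde{Y}^{\mathbb{G}},\widetilde{Z}^{\mathbb{G}},\widetilde{M}^{\mathbb{G}})=(Y^{\mathbb{G}}\sqrt[p]{\widetilde{\cal E}},Z^{\mathbb{G}}\sqrt[p]{\widetilde{\cal E}_{-}},\sqrt[p]{\widetilde{\cal E}_{-}}\is M^{\mathbb{G}})$ as in Theorem \ref{EstimateInfinite}-(d), computing the dynamics of $Y^{\mathbb{G}}\sqrt[p]{\widetilde{\cal E}}$ by integration by parts from $\widetilde{\cal E}={\cal E}(-\widetilde{G}^{-1}\is D^{o,\mathbb{F}})$ and the expression of $\sqrt[p]{\widetilde{\cal E}}$ as a Dol\'eans-Dade exponential whose finite-variation drift is governed by $\widetilde{V}^{(1/p)}$ from (\ref{Vepsilon}); the extra drift $-Y\sqrt[p]{{\widetilde{G}}/{G}}I_{\Rbrack0,\tau\Rbrack}d\widetilde{V}^{(1/p)}$ and the scaled generator $-f\sqrt[p]{\widetilde{\cal E}_{-}}d(t\wedge\tau)$ then emerge precisely as in the reflected case, only with the $K$-term absent.

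The main obstacle I anticipate is the same one that makes Theorem \ref{EstimateInfinite} ``technically involved'': showing that the object obtained as $T\to\infty$ is a genuine solution on $\Lbrack0,\tau\Rbrack$ rather than merely a limit of stopped solutions. Here the terminal data $h_{T\wedge\tau}$ itself varies with $T$, so the horizon-independent, universal-constant estimates above are essential to upgrade $L^p$-convergence of the data-tails into convergence of the martingale part and the correct identification $Y_{\tau}=\xi$. The fractional-power It\^o computation in (d), while routine in spirit, must also be handled with care, since $\sqrt[p]{\widetilde{\cal E}}$ is a nonlinear functional of a purely-discontinuous finite-variation exponential.
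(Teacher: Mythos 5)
Your proposal is exactly the paper's own route: the authors state that Theorem \ref{BSDEInfinite} cannot be deduced from Theorem \ref{EstimateInfinite} as a corollary (since (\ref{MainAssumption4InfiniteHorizon}) involves $\sup_{0\leq u\leq\cdot}\vert S_u\vert$, which is vacuous for $S\equiv-\infty$), but that its proof goes through verbatim ``by just ignoring the process $S$ and putting $K^{\mathbb G}=0$ throughout,'' which is precisely your plan, including the collapse of the Snell envelope to a plain conditional expectation, the re-use of Lemmas \ref{technicallemma1} and \ref{ExpecationQtilde2P}, and the change of variable for assertion (d). No gaps; this matches the paper's argument.
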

\subsection{Relationship to RBSDE under $\mathbb F$}
Hereto, we establish the $\mathbb{F}$-RBSDE counter part of (\ref{RBSDEGinfinite}).
   
\begin{theorem}\label{Relationship4InfiniteBSDE}  Let $p\in (1,+\infty)$,  and $ {\widetilde{\cal E}}$ and $(f^{\mathbb{F}},S^{\mathbb{F}})$ be given by (\ref{EpsilonTilde}) and (\ref{ProcessVFandXiF}) respectively.  Suppose $G>0$, $(f, S, h)$ satisfies (\ref{MainAssumption4InfiniteHorizon}), 
\begin{eqnarray}\label{MainAssumption4InfiniteHorizonBIS}
E\left[\left(F_{\infty}{\widetilde{\cal E}}_{\infty}\right)^p\right]<+\infty,\quad\mbox{where}\ F_{\infty}:=\int_0^{\infty}\vert{f}_s\vert ds,\end{eqnarray}
and
\begin{equation}\label{Condition4S(F)}
\limsup_{t\longrightarrow\infty}\left(S^{\mathbb{F}}_t\right)^+=0,\quad P\mbox{-a.s..}
\end{equation}
Then the following assertions hold.\\
{\rm{(a)}} The following RBSDE, under $\mathbb F$, generated by the triplet $ \left(f^{\mathbb{F}},S^{\mathbb{F}},h\right)$
\begin{equation}\label{RBSDEFinfinite}
\begin{split}
&Y_{t}= \displaystyle\int_{t}^{\infty}f^{\mathbb{F}}(s)ds+\int_{t}^{\infty}h_{s}dV^{\mathbb{F}}_{s}+K_{\infty}-K_{t}-\int_{t}^{\infty}Z_{s}dW_{s},\\
&Y_{t}\geq S_{t}^{\mathbb{F}},\quad
 \displaystyle{E}\left[\int_{0}^{\infty}(Y_{t-}-S_{t-}^{\mathbb{F}})dK_{t}\right]=0 ,
\end{split}
\end{equation}
has a unique  $L^p(P,\mathbb F)$-solution $(Y^{\mathbb F},  Z^{\mathbb F}, K^{\mathbb F})$ satisfying
\begin{equation}\label{SnellRepreentation4BSDE(F,infinity)}
Y_t^{\mathbb F}=\esssup_{\sigma\in{\cal T}_t^{\infty}(\mathbb F)}E\left[\int_t^{\sigma} f^{\mathbb F}_s ds+\int_t^{\sigma}h_s dV^{\mathbb F}_s+S^{\mathbb F}_{\sigma}I_{\{\sigma<+\infty\}}\ \big|{\cal F}_t\right],  \end{equation}
and there exists a universal constant $C_1\in (0,\infty)$ such that 
\begin{equation}\label{Estimate(F,infinity)}
\begin{split}
\Vert{Y}^{\mathbb F}\Vert_{\mathbb{D}_{\infty}(P,p)}+\Vert {Z}^{\mathbb F}\Vert_{\mathbb{S}_{\infty}(P,p)}+\Vert{K}^{\mathbb F}_{\infty}\Vert_{\mathbb{L}^p(P)}\leq C_1\norm{\int_0^{\infty} \vert{f}^{\mathbb{F}}_s\vert{d}s+\int_{0}^{\infty}\vert{h}_s\vert{d}V^{\mathbb{F}}_{s}+\sup_{0\leq t}(S^{\mathbb{F}}_t)^+}_{\mathbb{L}^p(P)}.\end{split}
\end{equation}
{\rm{(b)}} The solution to (\ref{RBSDEGinfinite}), denoted by $\left(Y^{\mathbb{G}},Z^{\mathbb{G}},K^{\mathbb{G}},M^{\mathbb{G}}\right)$, satisfies
\begin{equation}\label{firstrelationInfnite}
\begin{split}
   Y^{\mathbb{G}}= {{Y^{\mathbb{F}}}\over{\widetilde{\cal E}}}I_{\Lbrack0,\tau\Lbrack}+\xi{I}_{\Lbrack\tau,+\infty\Lbrack},\ 
  Z^{\mathbb{G}}={{Z^{\mathbb{F}}}\over{\widetilde{\cal E}_{-}}} I_{\Rbrack0,\tau\Rbrack},\quad K^{\mathbb{G}}={{I_{\Rbrack0,\tau\Rbrack}}\over{\widetilde{\cal E}_{-}}}\is{K} ^{\mathbb{F}},\ \mbox{and}\ 
      M^{\mathbb{G}}=\left(h-{{Y^{\mathbb{F}}}\over{\widetilde{\cal E}}}\right)\is{ N}^{\mathbb{G}}.
      \end{split}
       \end{equation}
\end{theorem}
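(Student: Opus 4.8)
The plan is to regard this as the infinite-horizon analogue of Theorem~\ref{abcde} and to reach it by letting $T\to\infty$ in the bounded-horizon machinery: the $P$-estimates of Theorems~\ref{estimates} and \ref{estimates1} already remove $\widetilde{Q}$ from the left-hand sides of the priori bounds, while Lemma~\ref{ExpecationQtilde2P} converts the $\widetilde{Q}$-norms of the data on the right-hand sides into the $L^p(P\otimes V^{\mathbb F})$-norms that define the natural data space. Existence and uniqueness of the $\mathbb G$-solution are already delivered by Theorem~\ref{EstimateInfinite}(a), so the two genuinely new tasks are assertion (a) (the infinite-horizon $\mathbb F$-RBSDE) and the identification (\ref{firstrelationInfnite}).

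For assertion (a), I would first push the integrability hypothesis from $(f,S,h)$ onto the $\mathbb F$-triplet $(f^{\mathbb F},S^{\mathbb F},h)$ of (\ref{RBSDEFinfinite}). Since $V^{\mathbb F}=1-\widetilde{\cal E}$ is nondecreasing with $V^{\mathbb F}_\infty\le 1$, Jensen gives $E[(\int_0^\infty|h_s|\,dV^{\mathbb F}_s)^p]\le\|h\|^p_{L^p(P\otimes V^{\mathbb F})}$, and $f^{\mathbb F}=\widetilde{\cal E}f$, $S^{\mathbb F}=\widetilde{\cal E}S$ are dominated by the $L^p(P\otimes V^{\mathbb F})$-norms of $f$ and $S$ together with the tail hypotheses (\ref{MainAssumption4InfiniteHorizonBIS}) and (\ref{Condition4S(F)}); the latter is exactly the ``barrier vanishing at infinity'' requirement that makes the infinite-horizon reflected problem well posed. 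With the $\mathbb F$-data thus placed in $L^p(P)$, I would invoke infinite-horizon reflected-BSDE theory under $(\mathbb F,P)$ (\cite{Klimsiak2}) for existence, uniqueness and the Snell representation (\ref{SnellRepreentation4BSDE(F,infinity)}), and derive (\ref{Estimate(F,infinity)}) by rerunning the Doob-plus-Burkholder--Davis--Gundy argument of Theorem~\ref{EstimatesUnderQtilde}, now under $(\mathbb F,P)$ with horizon $+\infty$.

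For assertion (b), I would take $(Y^{\mathbb F},Z^{\mathbb F},K^{\mathbb F})$ from (a), define the candidate quadruplet by the right-hand sides of (\ref{firstrelationInfnite}), and argue in two steps. First, membership: applying the norm-transfer Lemma~\ref{technicallemma1} to $\sqrt[p]{\widetilde{\cal E}}Y^{\mathbb G}$, $\sqrt[p]{\widetilde{\cal E}_-}Z^{\mathbb G}$, $\sqrt[p]{\widetilde{\cal E}_-}\is K^{\mathbb G}$ and $\sqrt[p]{\widetilde{\cal E}_-}\is M^{\mathbb G}$ and inserting (\ref{Estimate(F,infinity)}) puts the candidate in the weighted $L^p(P)$ solution space. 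Second, verification that it solves (\ref{RBSDEGinfinite}): on $\Lbrack0,\tau\Lbrack$ one computes $d(Y^{\mathbb F}/\widetilde{\cal E})$ by It\^o, using the multiplicative decomposition of Lemma~\ref{Decomposition4G} and the $\mathbb G$-martingale $N^{\mathbb G}$ of Theorem~\ref{Toperator}, exactly as in Part~2 of the proof of Theorem~\ref{abcde}; the jump of $Y^{\mathbb G}$ to $\xi=h_\tau$ at $\tau$ and the piece $M^{\mathbb G}=(h-Y^{\mathbb F}/\widetilde{\cal E})\is N^{\mathbb G}$ are generated by the $\is N^{\mathbb G}$ term, while the Skorokhod flat-off condition transfers because $K^{\mathbb F}$ grows only where $Y^{\mathbb F}_-=S^{\mathbb F}_-$, i.e. where $Y^{\mathbb G}_-=S_-$. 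Alternatively one may pass to the limit $T\to\infty$ in (\ref{secondrelation}) via Theorem~\ref{EstimateInfinite}(c) and the $\mathbb F$-difference estimates; either way the identification closes by the uniqueness in Theorem~\ref{EstimateInfinite}(a).

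The main obstacle I anticipate is the tail control at $+\infty$, absent in the bounded case. One must check that the finite-horizon boundary term $\widetilde{\cal E}_T h_T$ vanishes in the limit and that $M^{\mathbb G}=(h-Y^{\mathbb F}/\widetilde{\cal E})\is N^{\mathbb G}$ is truly in ${\cal M}^p(P)$; equivalently, that $h-Y^{\mathbb F}/\widetilde{\cal E}$ lies in ${\mathcal I}^o_{loc}(N^{\mathbb G},\mathbb G)$ with the right $L^p$-bound, which is precisely what (\ref{MainAssumption4InfiniteHorizonBIS}), (\ref{Condition4S(F)}) and the weighted norm-transfer of Lemma~\ref{technicallemma1} are there to secure.
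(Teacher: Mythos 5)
Your plan for assertion (b) is essentially the paper's Part 3: build the candidate quadruplet from the right-hand sides of (\ref{firstrelationInfnite}), verify it solves (\ref{RBSDEGinfinite}) by computing $d(Y^{\mathbb F}/\widetilde{\cal E})$ via It\^o together with the identities $\widetilde{\cal E}^{-1}={\cal E}(G^{-1}\is{D}^{o,\mathbb F})$, $\widetilde{\cal E}=\widetilde{\cal E}_{-}G/\widetilde{G}$ and $dV^{\mathbb F}=\widetilde{\cal E}_{-}\widetilde{G}^{-1}dD^{o,\mathbb F}$, transfer the Skorokhod condition through $0\leq\int_{0}^{\tau}(\widehat{Y}_{t-}-S_{t-})d\widehat{K}_t=\int_{0}^{\tau}(Y^{\mathbb F}_{t-}-S^{\mathbb F}_{t-})\widetilde{\cal E}_{t-}^{-2}dK^{\mathbb F}_t\leq 0$, and close with the uniqueness guaranteed by Theorem \ref{EstimateInfinite}. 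One caution: your alternative route of letting $T\to\infty$ in (\ref{secondrelation}) is precisely what the paper avoids, since $h_T$ need not converge and $\widetilde{Q}_T$ need not converge to a probability measure; stick with the direct verification.

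The genuine gap is in assertion (a). You propose to ``invoke infinite-horizon reflected-BSDE theory under $(\mathbb F,P)$ (\cite{Klimsiak2})'', but \cite{Klimsiak2} is the bounded-horizon result the paper already used in Theorem \ref{abcde}, and the paper explicitly positions (\ref{RBSDEFinfinite}) as \emph{new}: it generalizes \cite{Hamadane0} (infinite-horizon RBSDE with continuous barrier) to arbitrary RCLL $S$ and $K$, so there is no off-the-shelf theorem to cite, and existence, the representation (\ref{SnellRepreentation4BSDE(F,infinity)}), the estimate (\ref{Estimate(F,infinity)}) and uniqueness must all be proved by hand. The paper's construction: define $\overline{Y}$ directly by the right-hand side of (\ref{SnellRepreentation4BSDE(F,infinity)}) and bound it in $\mathbb{D}_{\infty}(P,p)$ by Doob; observe that $\overline{X}:=\overline{Y}+\int_0^{\cdot}f^{\mathbb F}(s)ds+\int_0^{\cdot}h_s\,dV^{\mathbb F}_s$ is the Snell envelope of the class-D reward $R:=S^{\mathbb F}+\int_0^{\cdot}f^{\mathbb F}(s)ds+\int_0^{\cdot}h_s\,dV^{\mathbb F}_s$, so that \cite[Remark 23-(c), Appendice 1]{DellacherieMeyer80} plus the Brownian martingale representation theorem yield $(\overline{Z},\overline{K})$ with the flat-off condition; and crucially, the $L^p$-control of $(\overline{Z},\overline{K})$ is obtained not by rerunning the It\^o/BDG bootstrap of Theorem \ref{EstimatesUnderQtilde} (your suggestion, which is delicate at infinite horizon), but by a single application of \cite[Lemma 1.9]{Stricker} to the submartingale $-\overline{X}$. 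Your proposal is also silent on uniqueness of the $\mathbb F$-solution: the paper shows any solution $Y$ is of class D, dominates $\overline{Y}$ by conditioning, and equals it via the stopping times $\theta_n:=\inf\{u\geq t:\ Y_u<S^{\mathbb F}_u+1/n\}$, on which $K$ is flat, giving $\overline{Y}_t\geq Y_t-n^{-1}P(\theta_n<\infty|{\cal F}_t)$; this is where (\ref{Condition4S(F)}) and the indicator $I_{\{\sigma<+\infty\}}$ do real work (the remark following the theorem notes (\ref{Condition4S(F)}) is in fact necessary). Your preliminary integrability transfer is correct and matches the paper's opening step (the identity $\int_0^{\infty}\vert f^{\mathbb F}_s\vert ds=\widetilde{\cal E}_{\infty}F_{\infty}+\int_0^{\infty}F_s\,dV^{\mathbb F}_s$ with (\ref{MainAssumption4InfiniteHorizonBIS}), plus Jensen for the $h$-term), but without a self-contained existence/uniqueness argument for (\ref{RBSDEFinfinite}), assertion (a) does not close.
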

The RBDSE (\ref{RBSDEFinfinite}) can reformulated by saying that the triplet $(Y,Z,K)$ is a solution to (\ref{RBSDEFinfinite}) if the following two properties hold:\\
a) $Y$ is RCLL and $\mathbb{F}$-adapted such that $Y_T$ converges to zero almost surely when $T$ goes to infinity, $K$ is RCLL nondecreasing and $\mathbb{F}$-predictable,\\
b) $Z$ is $\mathbb{F}$-progressive and $W$-integrable, and for any $T\in(0,\infty)$ and $0\leq t\leq T$,
\begin{equation*}
\begin{split}
&Y_{t}=Y_T+ \displaystyle\int_{t}^{T}f^{\mathbb{F}}(s)ds+\int_{t}^{T}h_{s}dV^{\mathbb{F}}_{s}+K_{T}-K_{t}-\int_{t}^{T}Z_{s}dW_{s},\\
&Y_{u}\geq S_{u}^{\mathbb{F}},\quad{u}\geq 0,\quad 
 \displaystyle{E}\left[\int_{0}^{\infty}(Y_{u-}-S_{u-}^{\mathbb{F}})dK_u\right]=0 ,
\end{split}
\end{equation*}
For similar definition in either the case of BSDE with unbounded horizon but finite or the case of RBSDE with infinite horizon, we refer to \cite[Section 3]{Klimsiak0}  and \cite{Hamadane0} respectively. Thus, from this perspective and for the linear case, our RBSDE generalizes \cite{Klimsiak0} to the cases where the horizon is infinite and having reflection barrier, while it generalizes \cite{Hamadane0} by letting $S$ and $K$ to be arbitrary RCLL.  However, the nonlinear setting of \cite{Klimsiak0}  and \cite{Hamadane0}, in which the driver $f$ depends on $Y$ and/or $Z$, can be found in \cite{Alsheyab,Choulli6}. 
  \begin{remark} {\rm{(a)}} It is clear that, in general, the existence of an $L^p(P,\mathbb{F})$-solution to (\ref{RBSDEFinfinite}) requires stronger assumptions than the existence of $L^p(P,\mathbb{G})$-solution to (\ref{RBSDEGinfinite}).\\
   {\rm{(b)}} It is worth mentioning that (\ref{Condition4S(F)}) is necessary for the existence of a solution to (\ref{RBSDEFinfinite}). In fact, the conditions $Y^{\mathbb{F}}_t\geq S^{\mathbb{F}}_t$ for any $t\geq 0$ and $\lim_{t\longrightarrow\infty}Y^{\mathbb{F}}_t=0$ $P$-a.s. yield  (\ref{Condition4S(F)}) immediately.\\
 {\rm{(c)}} Similar theorem for the BSDE setting can be easily derived, while in this case the condition (\ref{Condition4S(F)}) becomes obvious as $S\equiv-\infty$. For more details, see Theorem \ref{BSDEInfinite} and the discussions before it.
  \end{remark}
\subsection{Proof of Theorems \ref{EstimateInfinite} and \ref{Relationship4InfiniteBSDE} }\label{Subsection4proofTheorem5.5}
\begin{proof}[Proof of Theorem \ref{Relationship4InfiniteBSDE}] On the one hand, remark that, due to the assumptions  (\ref{MainAssumption4InfiniteHorizon})  and (\ref{MainAssumption4InfiniteHorizonBIS}), both random variables $\int_0^{\infty}\vert f^{\mathbb F}_s\vert ds$ and $\int_0^{\infty}\vert{h}_s\vert dV^{\mathbb F}_s$ belong to $L^p(P)$. Indeed, in virtue of $ (V^{\mathbb F}_{\infty})^{p-1}\leq 1$, this fact follows from 
\begin{eqnarray*}
\int_0^{\infty}\vert f^{\mathbb F}_s\vert ds={\widetilde{\cal E}}_{\infty}F_{\infty}+\int_0^{\infty} F_s dV^{\mathbb F}_s,\ \mbox{and}\ \left(\int_0^{\infty}\vert{h}_s\vert dV^{\mathbb F}_s\right)^p\leq\int_0^{\infty}\vert{h}_s\vert^p dV^{\mathbb F}_s .
\end{eqnarray*}
The rest of this proof is divided into three parts. The first part proves that  (\ref{RBSDEFinfinite}) has in fact an $L^p(P,\mathbb F)$-solution satisfying  (\ref{Estimate(F,infinity)}). The second part  proves that when a solution to (\ref{RBSDEFinfinite}) --say $(Y,Z,K)$-- exists, then $Y$ is of class D and coincides with $\overline{Y}$ given by 
\begin{equation}\label{Ybar}
\overline{Y}_t:=\esssup_{\sigma\in{\cal T}_t^{\infty}(\mathbb F)}E\left[\int_t^{\sigma} f^{\mathbb F}_s ds+\int_t^{\sigma}h_s dV^{\mathbb F}_s+S^{\mathbb F}_{\sigma}I_{\{\sigma<+\infty\}}\ \big|{\cal F}_t\right].   \end{equation}
Hence, this combined with the uniqueness of the Doob-Meyer decomposition imply the uniqueness of the solution to  (\ref{RBSDEFinfinite}). These two parts (parts 1 and 2) prove assertion (a), while assertion (b) is proved in the third part.\\
{\bf Part 1.} By using $-(f^{\mathbb{F}}(s))^-\leq f^{\mathbb{F}}(s)\leq (f^{\mathbb{F}}(s))^+$, $-(h_s)^-\leq h_s\leq(h_s)^+$, $ S_u^{\mathbb{F}}\leq \sup_{u\geq 0}( S_u^{\mathbb{F}})^+$ and $E\left[\int_t^{\infty} f^{\mathbb F}_s ds+\int_t^{\infty}h_s dV^{\mathbb F}_s\ \big|{\cal F}_t\right]\leq
\overline{Y}_t$, we get 
\begin{equation*}
\vert \overline{Y}_t\vert\leq{E}\left[\int_0^{\infty} \vert{f}^{\mathbb F}_s\vert{ d}s+\int_0^{\infty}\vert{h}_s\vert{ d}V^{\mathbb F}_s+\sup_{u\geq0}({S}^{\mathbb F}_u)^+\ \big|{\cal F}_t\right].
\end{equation*}
Then the Doob's inequality yields
\begin{equation}\label{YbarSupremum}
\Vert\overline{Y}\Vert_{{\mathbb{D}_{\infty}(P,p)}}\leq q \displaystyle\Big\Vert \int_0^{\infty} \vert{f}^{\mathbb F}_s\vert{ d}s+\int_0^{\infty}\vert{h}_s\vert{ d}V^{\mathbb F}_s+\sup_{u\geq 0}(S^{\mathbb F}_u)^+ \Big\Vert_{{\mathbb{L}}^p(P)}
\end{equation}
Remark that $\overline{X}:=\overline{Y}+\int_{0}^{\cdot}f^{\mathbb{F}}(s)ds+\int_{0}^{\cdot}h_s dV^{\mathbb{F}}_s$ is the Snell envelop for the reward process $R:=S^{\mathbb{F}}+\int_{0}^{\cdot}f^{\mathbb{F}}(s)ds+\int_{0}^{\cdot}h_s dV^{\mathbb{F}}_s$, which is RCLL  and of class D. Hence, thanks to \cite[Remark 23-(c), Appendice 1]{DellacherieMeyer80}, and the martingale representation theorem, we deduce the existence of unique pair  $(\overline{Z}, \overline{K})\in L^1_{loc}(W,P)\times{\cal{A}}^+$, such that $\overline{K}$ is predictable, $Z\is{W}\in{\cal{M}}$, and   
\begin{equation}\label{Decompo4Xbar}\begin{split}
&\overline{Y}=\overline{Y}_0-\int_{0}^{\cdot}f^{\mathbb{F}}(s)ds-\int_{0}^{\cdot}h_s dV^{\mathbb{F}}_s+\overline{Z}\is{W}-\overline{K},\\
&\overline{Y}\geq S^{\mathbb{F}},\quad  \int_0^{\infty}I_{\{\overline{Y}_{s-}>S_{s-}^{\mathbb{F}}\}}d\overline{K}_s=\int_0^{\infty}I_{\{\overline{X}_{s-}>R_{s-}\}}d\overline{K}_s=0,\quad P\mbox{-a.s.}. 
\end{split}\end{equation}
Thus, on the one hand, it is clear that the triplet $(\overline{Y},\overline{Z},\overline{K})$ is a solution to (\ref{RBSDEFinfinite}). On the other hand, a direct application of \cite[Lemma 1.9]{Stricker} to $-\overline{X}$, which is a submartingale, we deduce there exists a universal constant $C_p$ (i.e. depends on $p$ only), such that 
\begin{equation*}\label{Control4Z(m)}
\begin{split}
\Vert \overline{Z}\Vert_{{\mathbb{S}_{\infty}(P,p)}}+\Vert\overline{K}_{\infty}\Vert_{L^p(P)}&\leq C_p\Vert \overline{X}\Vert_{{\mathbb{D}_{\infty}(P,p)}}\\
&\leq C_p\Vert \overline{Y}\Vert_{{\mathbb{D}_{\infty}(P,p)}}+C_p\Big\Vert\int_{0}^{\infty}\vert{f}^{\mathbb{F}}(s)\vert{d}s+\int_{0}^{\infty}\vert{h}_s\vert{d}V^{\mathbb{F}}_s\Big\Vert_{L^p(P)}.
\end{split}
\end{equation*}
Therefore, by combining this latter inequality and (\ref{YbarSupremum}), we get (\ref{Estimate(F,infinity)}) for $(\overline{Y},\overline{Z},\overline{K})$, and the first part is complete.\\
{\bf Part 2.}  In this part, we assume there exists an $L^p(R_0)$-solution to (\ref{RBSDEFinfinite}), denoted by  $(Y,Z, K)$, and prove that $Y$ coincides with $\overline{Y}$. Then, $Y$ is of class D, and thanks again to \cite[Lemma 1.9]{Stricker} again, we deduce that $Z\is{W}\in{\cal{M}}^p$ and hence 
\begin{equation}
\begin{split}
Y_{t}&\geq{E}\left[Y_{\sigma}+\int_{t}^{\sigma}f^{\mathbb F}_s{d}s+\int_{t}^{\sigma}h_s dV^{\mathbb F}_s\big|{\cal{F}}_{t}\right]={E}\left[Y_{\sigma}I_{\{\sigma<\infty\}}+\int_{t}^{\sigma}f^{\mathbb F}_s{d}s+\int_{t}^{\sigma}h_s dV^{\mathbb F}_s\big|{\cal{F}}_{t}\right]\\
&\geq {E}\left[S_{\sigma}I_{\{\sigma<\infty\}}+\int_{t}^{\sigma}f^{\mathbb F}_s{d}s+\int_{t}^{\sigma}h_s dV^{\mathbb F}_s\big|{\cal{F}}_{t}\right].
\end{split}
\end{equation}
Thus, by taking the essential supremum over $\sigma\in{\cal{T}}_t^{\infty}$, the latter inequality implies that $Y_t\geq \overline{Y}_t$ $P$-a.s.. To prove the reverse, we consider the following stopping times
 \begin{equation*}
   \theta_{n}:=\inf\left\{u\geq t:\quad Y_{u}<S_u^{\mathbb{F}}+\frac{1}{n}\right\}\in\mathcal{J}_{t}^{\infty}(\mathbb{F}),\quad n\geq 1,
 \end{equation*}
  and remark that
  \begin{eqnarray*}
 Y-S\geq\frac{1}{n}\quad  \mbox{on }\quad  \Lbrack t,\theta_{n}\Lbrack,\quad\mbox{and}\quad 
Y_{-}-S_{-}\geq\frac{1}{n}\quad  \mbox{on }\quad \Rbrack t,\theta_{n}\Rbrack,\quad (S_{\theta_n})-Y_{\theta_n})I_{\{\theta_n<\infty\}}\geq -{1\over{n}}.
   \end{eqnarray*}
As a result, ${ K}_{\theta_n}-K_t=0$ $P$-a.s., and by using
\begin{equation*}\label{Y(n,k)}
Y_{t}=Y_{\theta_n}+\int_{t}^{\theta_n}f^{\mathbb F}_s{d}s+\int_{t}^{\theta_n}h_s dV^{\mathbb F}_s+K_{\theta_n}-K_{t} -\int_{t}^{\theta_n}Z_s{dW_s},\end{equation*}
we derive
     \begin{equation*}      \begin{split}
    \overline{Y}_t&\geq E\left[\int_{t}^{\theta_{n}}f^{\mathbb{F}}(s)ds+\int_{t}^{\theta_{n}}h_s dV^{\mathbb{F}}_s+S_{\theta_{n}}1_{\{\theta_{n}<\infty\}}\ \Big|\ \mathcal{F}_{t}\right]  \\
    &   =    Y_{t}+E\left[(S_{\theta_{n}}-Y_{\theta_{n}})1_{\{\theta_{n}<\infty\}} \Big|\ \mathcal{F}_{t}\right]\geq Y_{t}-\frac{1}{n}P(\theta_{n}<\infty|\mathcal{F}_{t}).
         \end{split} \end{equation*} 
  Therefore, this yields $\overline{Y}_t\geq{Y}_t$, and  the equality $\overline{Y}_t={Y}_t$ follows. This ends the second part and completes the proof of assertion (a). \\
{\bf Part 3.} Here we prove assertion (b). Remark that, in virtue of Theorem \ref{EstimateInfinite},  the RBSDE (\ref{RBSDEGinfinite}) has at most one solution. Therefore, it is enough to prove that $(\widehat{Y}, \widehat{Z}, \widehat{K},\widehat{M})$ given by 
\begin{equation*}
\begin{split} \widehat{Y}:= \frac{Y^{\mathbb{F}}}{{\widetilde{\cal E}}}I_{\Lbrack0,\tau\Lbrack}+\xi{I}_{\Lbrack\tau,+\infty\Lbrack},\ 
\widehat{Z}:=\frac{Z^{\mathbb{F}}}{{\widetilde{\cal E}}_{-}} I_{\Rbrack0,\tau\Rbrack},\quad   \widehat{K}:=\frac{1}{{\widetilde{\cal E}}_{-}}\is(K ^{\mathbb{F}})^{\tau},\ \mbox{and}\ 
   \widehat{M}:=\left(h-\frac{Y^{\mathbb{F}}}{\widetilde{\cal E}}\right)\is{ N}^{\mathbb{G}}, \end{split} \end{equation*}
   is indeed a solution to  (\ref{RBSDEGinfinite}). To this end, we put 
$\widehat{\Gamma}:={Y}^{\mathbb F}/{\widetilde{\cal E}},$ and on the one hand we remark that  
\begin{equation}\label{YGGammainfinite}
\widehat{Y} =\widehat{\Gamma}I_{\Lbrack0,\tau\Lbrack}+  h_{\tau} I_{\Lbrack\tau,+\infty\Lbrack}=\widehat{\Gamma}^{\tau} +(h-\widehat{\Gamma})\is{ D}.\end{equation}
On the other hand, by combining It\^o applied to $\widehat{\Gamma}$, (\ref{RBSDEFinfinite}) that the triplet $(Y^{\mathbb F}, Z^{\mathbb{F}}, K^{\mathbb{F}})$ satisfies, ${\widetilde{\cal E}}^{-1}={\cal E}(G^{-1}\is{ D}^{o,\mathbb F})$, ${\widetilde{\cal E}}={\widetilde{\cal E}}_{-}G/{\widetilde{G}}$, and $dV^{\mathbb F}={\widetilde{\cal E}}_{-}{\widetilde{G}}^{-1}dD^{o,\mathbb F}$, we derive 
\begin{equation*} \begin{split}
d\widehat{\Gamma}&=Y^{\mathbb F}d{\widetilde{\cal E}}^{-1}+{1\over{{\widetilde{\cal E}}_{-}}}dY^{\mathbb F}={{\widehat{\Gamma}}\over{\widetilde G}}dD^{o,\mathbb F}+{1\over{{\widetilde{\cal E}}_{-}}}d{Y}^{\mathbb F}\\
&={{\widehat{\Gamma}}\over{\widetilde G}}dD^{o,\mathbb F}-{{f^{\mathbb{F}}}\over{{\widetilde{\cal E}}_{-}}}ds-{{h}\over{{\widetilde{\cal E}}_{-}}}dV^{\mathbb{F}}-{{1}\over{{\widetilde{\cal E}}_{-}}}dK^{\mathbb{F}}+{{Z^{\mathbb{F}}}\over{{\widetilde{\cal E}}_{-}}}dW\nonumber\\\
&={{\widehat{\Gamma}-h}\over{\widetilde G}}dD^{o,\mathbb F}-fds-{{1}\over{{\widetilde{\cal E}}_{-}}}dK^{\mathbb{F}}+{{Z^{\mathbb{F}}}\over{{\widetilde{\cal E}}_{-}}}dW.\label{equation4Gamma}
      \end{split}\end{equation*}
Thus, by  stopping $\widehat{\Gamma}$ and inserting the above equality in (\ref{YGGammainfinite}), we get 
\begin{equation}\label{SDE4YG}
d\widehat{Y} =-f(t)d(t\wedge\tau)-d\widehat{K}+d \widehat{M}+{\widehat{Z}}dW^{\tau},\quad\mbox{and}\quad \widehat{Y}_{\tau}=\xi.
\end{equation} 
This proves that $(\widehat{Y}, \widehat{Z}, \widehat{K},\widehat{M})$ satisfies the first equation in (\ref{RBSDEGinfinite}). Furthermore, it is clear that   $
{Y}^{\mathbb{F}}_{t}\geq S_{t}^{\mathbb{F}}$ implies  the second condition in (\ref{RBSDEGinfinite}). To prove the Skorokhod condition (the last condition in (\ref{RBSDEGinfinite})), we combine the Skorokhod condition for $({Y}^{\mathbb F}, {Z}^{\mathbb F}, {K}^{\mathbb F})$ with $\widehat{Y}_{-}\geq S_{-}$ on $\Rbrack0,\tau\Rbrack$, and derive
  \begin{equation*}
0\leq \int_{0}^{\tau}({\widehat{Y}}_{t-}-S_{t-})d\widehat{K}_t=\int_{0}^{\tau}({Y}^{\mathbb F}_{t-}-S^{\mathbb F}_{t-}){\widetilde{\cal E}}_{t-}^{-2}d{K}^{\mathbb F}_t\leq 0,\ P\mbox{-a.s..}
 \end{equation*}
This ends the second part, and the proof of the theorem is complete.
\end{proof}
\begin{proof}[Proof of Theorem \ref{EstimateInfinite} ]  We start by remarking that, due to It\^o's calculations, $\left(Y^{\mathbb{G}},Z^{\mathbb{G}},K^{\mathbb{G}},M^{\mathbb{G}}\right)$ is the unique solution to  (\ref{RBSDEGinfinite}) if and only if 
$$\left(\widetilde{Y}^{\mathbb{G}},\widetilde{Z}^{\mathbb{G}},\widetilde{K}^{\mathbb{G}},\widetilde{M}^{\mathbb{G}}\right):=\left(Y^{\mathbb{G}}\sqrt[p]{\widetilde{\cal E}},Z^{\mathbb{G}}\sqrt[p]{\widetilde{\cal E}_{-}},\sqrt[p]{\widetilde{\cal E}_{-}}\is{K}^{\mathbb{G}},\sqrt[p]{\widetilde{\cal E}_{-}}\is{M}^{\mathbb{G}}\right)$$ is the unique solution to (\ref{EquivalentRBSDE}). Furthermore, under (\ref{MainAssumption4InfiniteHorizon}), this solution is an $L^p(P,\mathbb G)$-solution as soon as assertion (b) holds. In fact, this is due to 
 \begin{eqnarray*}
 \Vert \widetilde{Y}^{\mathbb{G}}\Vert_{\mathbb{D}_{\tau}(P,p)}=\Vert{Y}^{{\mathbb{G}}}\Vert_{\widetilde{\mathbb{D}}_{\tau}(P,p)},\  \Vert \widetilde{Z}^{\mathbb{G}}\Vert_{\mathbb{S}_{\tau}(P,p)}=\Vert{Z}^{{\mathbb{G}}}\Vert_{\widetilde{\mathbb{S}}_{\tau}(P,p)}.
 \end{eqnarray*}
Thus, on the one hand, assertion (d) follows immediately as soon as assertions (a) and (b) hold. On the other hand, the uniqueness of the solution to  (\ref{RBSDEGinfinite}) is a direct consequence of assertion (c). Therefore, the rest of the proof focuses on proving existence of a solution to  (\ref{RBSDEGinfinite}) (i.e. the first half of assertion (a)), and assertions (b) and (c) in four parts.\\
{\bf Part 1.} Here, in this part, we consider an $\mathbb F$-stopping time $\sigma$, and we assume that there exists a positive constant $C\in (0,+\infty)$ such that 
\begin{eqnarray}\label{BoundednessAssumption}
\max\left(\vert{h}\vert,\int_0^{\cdot}\vert f_s\vert ds, \sup_{0\leq u\leq\cdot}S_u^+\right)\leq C {\cal E}(G_{-}^{-1}\is{ m})^{1/p},\quad\mbox{on}\quad\Lbrack0,\sigma\Rbrack.\end{eqnarray}
Our goal, in this part, lies in proving under this assumption that there exists a solution to  (\ref{RBSDEGinfinite}) and assertion (b) holds for $\Lbrack0,\sigma\wedge\tau\Rbrack$. To this end, we consider the sequence  of data $(f^{(n)}, h^{(n)}, S^{(n)})$ given for $n\geq 1$ by 
\begin{equation}\label{Sequence4[0,n]}
f^{(n)}:=fI_{\Lbrack0,n\wedge\sigma\Rbrack},\ h^{(n)}_t:=h_{t\wedge\sigma\wedge{n}},\ S^{(n)}_t:=S_{n\wedge{t}\wedge\sigma},\ \xi^{(n)}:=h_{n\wedge\sigma\wedge\tau}.
\end{equation}
For any $n\geq 1$, thanks to Theorem \ref{abcde}-(b) and Remark \ref{Extension4Section4toSigma}-(a), the RBSDE (\ref{RBSDEGinfinite}) associated to $(f^{(n)},S^{(n)},\xi^{(n)})$ has a unique solution denoted by $({Y}^{\mathbb{G},n},Z^{\mathbb{G},n},M^{\mathbb{G},n}, K^{\mathbb{G},n})$ for the horizon $ n\wedge\sigma\wedge\tau$ (i.e., the case when $T=n\wedge\sigma$). For any $n, m\geq 1$, we apply  Theorem \ref{estimates} to each $(f^{(n)}, h^{(n)}, S^{(n)})$ and apply Theorem \ref{estimates1} to the triplet $(\delta{f}, \delta{h},\delta{ S},\delta\xi)$ given by
\begin{equation*}
\begin{split}
\delta{f}:=f^{(n)}-f^{(n+m)},\quad \delta{h}:= h^{(n)}-h^{(n+m)},\quad\delta{ S}:=S^{(n)}-S^{(n+m)},\quad\delta\xi:=\xi^{(n)}-\xi^{(n+m)},\end{split}
\end{equation*}
using the bounded horizon $T=(n+m)\wedge\sigma$ for both theorems. This yields
\begin{equation}\label{Inequality4Limit}      \begin{split}
&E\left[\sup_{0\leq t\leq T}\widetilde{\cal E}_t\vert{Y}^{\mathbb{G},n}_{t}\vert^p+\left(\int_{0}^{T}(\widetilde{\cal E}_{s-})^{{{2}\over{p}}}\vert Z^{\mathbb{G},n}_{s}\vert^{2}ds\right)^{{{p}\over{2}}}\right]\\
&+E\left[\left((\widetilde{\cal E}_{-})^{1/p}\is{ K}^{\mathbb{G},n}_T\right)^p+((\widetilde{\cal E}_{-})^{2/p}\is[ M^{\mathbb{G},n}, M^{\mathbb{G},n}]_T)^{{{p}\over{2}}}\right]\\
&\leq \widetilde{C} E^{\widetilde{Q}}\left[\left(\int_{0}^{n\wedge\sigma\wedge\tau}\vert f(s)\vert ds\right)^p +\sup_{0\leq s \leq n\wedge\sigma\wedge\tau}(S^{+}_{s})^p+\vert\xi^{(n)}\vert^p\right],
      \end{split}\end{equation}
and 
 \begin{equation}\label{Inequa4Convergence}
 \begin{split}
&\Vert{Y}^{\mathbb{G},n}-Y^{\mathbb{G},n+m}\Vert_{\widetilde{\mathbb{D}}_{\tau}(P,p)}+\Vert{ Z}^{\mathbb{G},n}-Z^{\mathbb{G},n+m}\Vert_{\widetilde{\mathbb{S}}_{\tau}(P,p)}+\Vert\sqrt[p]{\widetilde{\cal E}_{-}}\is(M^{\mathbb{G},n}-M^{\mathbb{G},n+m})^{\tau}\Vert_{{\cal{M}}^p(P)} \\
&\leq  \widetilde{C}_1 \Delta_{\widetilde{Q}}(\xi^{(n)}-\xi^{(n+m)},{f}^{(n)}-f^{(n+m)},{S}^{(n)}-S^{(n+m)})\\
&+ \widetilde{C}_2\sqrt{\Vert\sup_{0\leq t\leq\tau\wedge\sigma}\vert{S}_{t\wedge{n}}-S_{t\wedge{(n+m)}}\vert\Vert_{L^p(\widetilde{Q})}\sum_{i\in\{n,n+m\}}\Delta_{\widetilde{Q}}(\xi^{(i)}, f^{(i)},(S^{(i)})^+)},\end{split}
\end{equation}
where $\Delta_{\widetilde{Q}}(\xi^{(i)}, f^{(i)},(S^{(i)})^+)$ is given via (\ref{Delta(xi,f,S)}), and which we recall below:
\begin{eqnarray*}
\Delta_{\widetilde{Q}}(\xi^{(i)}, f^{(i)},(S^{(i)})^+):=\Vert\xi^{(i)}\Vert_{L^p(\widetilde{Q})}+\Vert\int_{0}^{T\wedge\tau}\vert{f}^{(i)}(s)\vert ds\Vert_{L^p(\widetilde{Q})} +\Vert({S}^{(i)})^+\Vert_{\mathbb{S}_{T\wedge\tau}(\widetilde{Q})}.\end{eqnarray*}
Next, we calculate the limits, when $n$ and/or $m$ go to infinity, of the right-hand-sides of the inequalities (\ref{Inequality4Limit}) and (\ref{Inequa4Convergence}). To this end, we start by applying Lemma \ref{ExpecationQtilde2P} to $\left(\int_{0}^{\cdot}\vert f(s)\vert ds\right)^p$, $\sup_{0\leq s \leq \cdot}(S^{+}_{s})^p$, and $\vert{h}\vert^p$,  and get
\begin{equation}\label{Limits4RightHandSide}
\begin{split}
&\lim_{n\to\infty}E^{\widetilde{Q}}\left[\left(\int_{0}^{n\wedge\sigma\wedge\tau}\vert f(s)\vert ds\right)^p\right]=G_0E\left[\int_0^{\infty} (F_{t\wedge\sigma})^p dV^{\mathbb F}_t\right],\\
&\lim_{n\to\infty}E^{\widetilde{Q}}\left[\sup_{0\leq s \leq n\wedge\sigma\wedge\tau}(S^{+}_{s})^p\right]=G_0E\left[\int_0^{\infty} \sup_{0\leq s \leq{t\wedge\sigma}}(S^{+}_{s})^pdV^{\mathbb F}_t\right],\\
&\lim_{n\to\infty}E^{\widetilde{Q}}\left[\vert\xi^{(n)}\vert^p\right]=G_0E\left[\int_0^{\infty} \vert{h}_{t\wedge\sigma}\vert^p dV^{\mathbb F}_t\right].
\end{split}
\end{equation} 
This determines the limits for the right-hand-side terms of  (\ref{Inequality4Limit}). To address the limits of the right-hand-side terms of (\ref{Inequa4Convergence}), we remark that 
\begin{equation*}
\Vert\int_{0}^{T\wedge\tau}\vert{f}^{(n)}(s)-f^{(n+m)}(s)\vert {d}s\Vert_{L^p(\widetilde{Q})}^p=\Vert \int_0^{\tau\wedge(n+m)}I_{\Lbrack0,\sigma\Rbrack\cap\Rbrack{n},\infty\Lbrack}(s)\vert{f}(s)\vert{d}s\Vert_{L^p(\widetilde{Q})}^p.
\end{equation*}
increases with $m$. Thus, put $F^{(n)}(s):=(F(s\wedge\sigma)-F(s\wedge{n}\wedge\sigma))^p$ and by applying Lemma  \ref{ExpecationQtilde2P}, we obtain 
\begin{equation*}
\sup_m\Vert\int_{0}^{\tau}\vert{f}^{(n)}(s)-f^{(n+m)}(s)\vert {d}s\Vert_{L^p(\widetilde{Q})}^p=G_0E\left[\int_0^{\infty}F^{(n)}(s){d}V^{\mathbb{F}}_s\right].
\end{equation*}
By combining this equality with (\ref{MainAssumption4InfiniteHorizon}) and the dominated convergence theorem, we deduce that 
\begin{equation}\label{LimitZero1}
\displaystyle\lim_{n\to\infty}\sup_{m\geq1}\Vert\int_{0}^{\tau}\vert{f}^{(n)}(s)-f^{(n+m)}(s)\vert {d}s\Vert_{L^p(\widetilde{Q})}^p=0.\end{equation}
Similar arguments allow us to conclude that
\begin{equation*}
\Vert{S}^{(n)}-S^{(n+m)}\Vert_{\mathbb{D}_{T\wedge\tau}(\widetilde{Q},p)}^p=E^{\widetilde{Q}}\left[\sup_{n<t\leq(n+m)\wedge\sigma\wedge\tau}\vert{S}_{n}-S_t\vert^p I_{\{\tau\wedge\sigma>n\}}\right],\end{equation*}
increases with $m$ to $\displaystyle{G_0}E\left[\int_{0}^{\infty}\sup_{n<u\leq\sigma\wedge{t}}\vert{S}_{n}-S_u\vert^pI_{\{\sigma\wedge{t}>n\}}{d}V^{\mathbb F}_t\right]$, and hence 
\begin{equation}\label{LimitZero2}
\displaystyle\lim_{n\to\infty}\sup_{m\geq1}\Vert{S}^{(n)}-S^{(n+m)}\Vert_{\mathbb{D}_{T\wedge\tau}(\widetilde{Q},p)}^p=0.\end{equation}
Thanks to (\ref{BoundednessAssumption}), we derive
\begin{equation*}      \begin{split}
E^{\widetilde{Q}}\left[\vert\xi^{(n)}-\xi^{(n+m)}\vert^p\right]&=E\left[\widetilde{Z}_{(n+m)\wedge\tau}\vert{h}_n-h_{\tau\wedge\sigma\wedge(n+m)}\vert^p{I}_{\{\tau\wedge\sigma>n\}}\right]\\
&\leq 2^pCP(\tau\wedge\sigma>n),      \end{split}\end{equation*}
and as a result we get
\begin{equation}\label{LimitZero3}
\displaystyle\lim_{n\to\infty}\sup_{m\geq1}E^{\widetilde{Q}}\left[\vert\xi^{(n)}-\xi^{(n+m)}\vert^p\right]\leq \lim_{n\to\infty} 2CP(\tau\wedge\sigma>n)=0.
\end{equation} 
Thus, by combining (\ref{LimitZero1}), (\ref{LimitZero2}) and (\ref{LimitZero3}), we conclude that the right-hand-side term of  (\ref{Inequa4Convergence}) goes to zero when $n$ goes to infinity uniformly in $m$. This proves that $(Y^{\mathbb{G},n}, Z^{\mathbb{G},n}, K^{\mathbb G,n},M^{\mathbb{G},n})$ is a Cauchy sequence in norm, and hence it converges to $(Y^{\mathbb{G}}, Z^{\mathbb{G}}, K^{\mathbb G},M^{\mathbb{G}})$ in norm and almost surely for a subsequence. On the one hand, we conclude that $(Y^{\mathbb{G}}, Z^{\mathbb{G}}, K^{\mathbb G},M^{\mathbb{G}})$ satisfies the firts equation in (\ref{RBSDEGinfinite}) and $Y^{\mathbb{G}}\geq S$ on $\Lbrack0,\tau\Lbrack$. The proof of the Skorokhod condition (the last condition in (\ref{RBSDEGinfinite})), is similar to the one in the proof of Theorem \ref{Relationship4InfiniteBSDE} (see the end of  part 2). On the other hand, by taking the limit in (\ref{Inequality4Limit})  and using Fatou and (\ref{Limits4RightHandSide}), the proof of assertion (b) follows immediately. This ends the first part. \\
 {\bf Part 2.} In this part we prove that assertion (c) holds under the assumption (\ref{BoundednessAssumption}) over the interval $\Lbrack0,\sigma\wedge\tau\Rbrack$, where $\sigma$ is an $\mathbb F$-stopping time. To this end, we consider two triplets $(f^{(i)}, S^{(i)},h^{(i)})$, $i=1,2$, which satisfy the boundedness assumption (\ref{BoundednessAssumption}), and to which we associate two  sequences $(f^{(i, n)}, S^{(i,n)}, h^{(i,n)})$, $i=1,2$, as in (\ref{Sequence4[0,n]}). On the one hand, by virtue of part 1, we deduce that for each $i=1,2$ $(Y^{\mathbb{G},(i,n)}, Z^{\mathbb{G},(i,n)}, K^{\mathbb G,(i,n)},M^{\mathbb{G},(i,n)})$ converges to  $(Y^{\mathbb{G},(i)}, Z^{\mathbb{G},(i)}, K^{\mathbb G,(i)},M^{\mathbb{G},(i)})$ in norm and almost surely for a subsequence, and hence this quadruplet limit is solution to  (\ref{RBSDEGinfinite}) for the horizon $\sigma\wedge\tau$. On the other hand, for each $n\geq 1$, we apply Theorem \ref{estimates1} for  
\begin{eqnarray*}\begin{split}
&\delta f^{(n)}:=f^{(1, n)}- f^{(2, n)},\quad\delta S^{(n)}:=S^{(1,n)}-S^{(2,n)},\quad\delta\xi^{(n)}:= \xi^{(1,n)}-\xi^{(2,n)},\\
& \mbox{and}\\
&\delta{Y}^{\mathbb{G},(n)}:=Y^{\mathbb{G},(1,n)}-Y^{\mathbb{G},(2,n)}, \quad \delta{ Z}^{\mathbb{G},(n)}:=Z^{\mathbb{G},(1,n)}-Z^{\mathbb{G},(2,n)},\\
& \delta{K}^{\mathbb G,(n)}:=K^{\mathbb G,(1,n)}-K^{\mathbb G,(2,n)},\quad \delta{M}^{\mathbb{G},(n)}:=M^{\mathbb{G},(1,n)})-M^{\mathbb{G},(2,n)},\end{split}
\end{eqnarray*}
 and get 
 \begin{equation}\label{Convergence4Differences}
 \begin{split}
& \Vert\delta{Y}^{\mathbb{G},(n)}\Vert_{\widetilde{\mathbb{D}}_{T\wedge\tau}(P,p)}+ \Vert\delta{Z}^{\mathbb{G},(n)}\Vert_{\widetilde{\mathbb{S}}_{T\wedge\tau}(P,p)}+ \Vert(\widetilde{\cal E}_{-})^{1/p}\cdot\delta{M}^{\mathbb{G},(n)}\Vert_{L^p(P)}\\
&\leq \widetilde{C}_1  \Delta_{\widetilde{Q}}(\delta\xi^{(n)}, \delta f^{(n)},\delta S^{(n)})+ \widetilde{C}_2\sqrt{\Vert\delta S^{(n)}\Vert_{\mathbb{S}(\widetilde{Q},p)}\Sigma_n},\end{split}
\end{equation}
where 
\begin{equation}\label{Sigma(n)}
\Sigma_n:=\sum_{i=1}^2\Delta_{\widetilde{Q}}\left(\xi^{(i,n)},{f}^{(i,n)},({S}^{(i,n)})^+\right).\end{equation}
Similarly, as in the proof of (\ref{Limits4RightHandSide}), we use Lemma \ref{ExpecationQtilde2P}  and the boundedness assumption  (\ref{BoundednessAssumption}) that each  triplet $(f^{(i)}, S^{(i)},h^{(i)})$ satisfies, and get
\begin{eqnarray}\label{Limits4Differences}
\begin{split}
&\displaystyle\lim_{n\to\infty}E^{\widetilde{Q}}\left[\left(\int_{0}^{n\wedge\sigma\wedge\tau}\vert \delta{f}^{(n)}_s)\vert ds\right)^p\right]=G_0E\left[\int_0^{\infty} \vert\delta{F}_{t\wedge\sigma}\vert^p dV^{\mathbb F}_t\right],\\ 
&\displaystyle\lim_{n\to\infty}E^{\widetilde{Q}}\left[\vert\delta\xi^{(n)}\vert^p\right]=G_0E\left[\int_0^{\infty} \vert\delta{h}_{t\wedge\sigma}\vert^p dV^{\mathbb F}_t\right],\\
&\displaystyle\lim_{n\to\infty}E^{\widetilde{Q}}\left[\sup_{0\leq s \leq\sigma\wedge\tau}\vert\delta{S}_{s}^{(n)}\vert^p\right]=G_0E\left[\int_0^{\infty} \sup_{0\leq s \leq{t\wedge\sigma}}\vert\delta{S}_{s}\vert^pdV^{\mathbb F}_t\right].\end{split}
\end{eqnarray}
 Thus, by taking the limit in (\ref{Convergence4Differences}), using Fatou's lemma for its left-hand-side term, and using (\ref{Limits4Differences}) for its right-hand-side term, assertion (c) follows immediately. This ends the second part.\\
{\bf Part 3.} In this part, we drop the assumption (\ref{BoundednessAssumption}) and prove existence of solution to  (\ref{RBSDEGinfinite})  and assertion (b). Hence, we consider the following sequence of stopping times
\begin{eqnarray*}
T_n:=\inf\left\{t\geq 0\ :\quad  {{\vert{h}_t\vert^p+\vert{S}_t\vert^p+(\int_0^t\vert f(s)\vert ds)^p}\over{{\cal E}_t(G_{-}^{-1}\cdot m)}} >n\right\},\end{eqnarray*}
and the sequences
\begin{eqnarray}\label{Consutrction4DataSequence}
h^{(n)}:=hI_{\Lbrack0, T_n\Lbrack},\ f^{(n)}:=fI_{\Lbrack0, T_n\Rbrack},\ S^{(n)}:=S I_{\Lbrack0, T_n\Lbrack},\ \xi^{(n)}:=h_{\tau}I_{\{\tau<T_n\}}.
\end{eqnarray}
Thus, for any $n\geq 1$, it is clear that the triplet $(f^{(n)}, h^{(n)}, S^{(n)})$ satisfies (\ref{BoundednessAssumption}) on $\Lbrack0, T_n\Rbrack$.  Thus, thanks to the first and the second parts, we deduce the existence of a unique solution to (\ref{RBSDEGinfinite}), denoted by $(Y^{\mathbb{G},(n)}, Z^{\mathbb{G},(n)}, K^{\mathbb G,(n)},M^{\mathbb{G},(n)})$, associated to $(f^{(n)}, h^{(n)}, S^{(n)})$ with the horizon $T_n\wedge\tau$, which remains a solution for any horizon $T_k\wedge\tau$ with $k\geq n$. Furthermore, we put 
$$
\Gamma(k):= \Vert{ F}^{(k)}+\vert{h}^{(k)}\vert+\sup_{0\leq u\leq \cdot}(S^{(k)}_u)^+\Vert_{L^p(P\otimes{V}^{\mathbb F})},\quad k\geq 1,$$
 and derive 
\begin{equation}\label{Estimate4PTinfinityproof}
\Vert{Y}^{\mathbb{G},n}\Vert_{\widetilde{\mathbb{D}}_{\tau}(P,p)}+\Vert\sqrt[p]{\widetilde{\cal E}_{-}}\is({M}^{\mathbb{G},n})^{\tau}\Vert_{{\cal{M}}^p(P)}
+\Vert{Z}^{\mathbb{G},n}\Vert_{\widetilde{\mathbb{S}}_{\tau}(P,p)}+\Vert\sqrt[p]{{\widetilde{\cal E}_{-}}}\is{K}^{\mathbb{G},n})_{\tau}\Vert_{L^p(P)}
\leq C\Gamma(n),
\end{equation}
 due to assertion (b) and for any $n\geq 1$ and $m\geq 1$
 \begin{equation}\label{Estimate4P1Tinifiniteproof}
  \begin{split}
 &
 \Vert{Y}^{\mathbb{G},n}-Y^{\mathbb{G},n+m}\Vert_{\widetilde{\mathbb{D}}_{\tau}(P,p)}+\Vert{ Z}^{\mathbb{G},n}-Z^{\mathbb{G},n+m}\Vert_{\widetilde{\mathbb{D}}_{\tau}(P,p)}
 +
\Vert\sqrt[p]{\widetilde{\cal E}_{-}}\is({M}^{\mathbb{G},n}-M^{\mathbb{G},n+m})^{\tau}\Vert_{{\cal{M}}^p(P)}
\\
\leq& C_1 \Vert\vert {h}^{(n)}-{h}^{(n+m)} \vert+\vert{ F}^{(n)}-{ F}^{(n+m)}\vert+\sup_{0\leq{u}\leq\cdot}\vert{S}_u^{(n)}-{S}_u^{(n+m)}\vert\Vert_{L^p(P\otimes{V}^{\mathbb F})}\\
&+C_2\sqrt{\Vert\sup_{0\leq{u}\leq\cdot}\vert{S}_u^{(n)}-{S}_u^{(n+m)}\vert\Vert_{L^p(P\otimes{V}^{\mathbb F})}\sum_{i\in\{n,n+m\}}\Gamma(i)}.
\end{split}
\end{equation}
Thanks to part 2, this latter inequality follows from assertion (c) applied to 
\begin{equation*}
\begin{split}
&\delta{Y}^{\mathbb{G}}:=Y^{\mathbb{G},(n)}-Y^{\mathbb{G},(n+m)},\ \delta{Z}^{\mathbb{G}}:=Z^{\mathbb{G},(n)}-Z^{\mathbb{G},(n+m)},\  \delta{S}:=S^{(n)}-S^{(n+m)},\\
& \delta{K}^{\mathbb{G}}:=K^{\mathbb{G},(n)}-K^{\mathbb{G},(n+m)},\ \delta{M}^{\mathbb{G}}:=M^{\mathbb{G},(n)}-M^{\mathbb{G},(n+m)},\label{deltaProcesses}\\
&\delta{h}:=h^{(n)}-h^{(n+m)} ,\ \delta{F}:=\int_0^{\cdot}\vert{f}^{(n)}_s-{f}^{(n+m)}_s \vert{d}s,\quad F^{(i)}:=\int_0^{\cdot}\vert{f}^{(i)}_s\vert{d}s,\quad .\end{split}
\end{equation*}
Then by virtue of (\ref{MainAssumption4InfiniteHorizon}) and the dominated convergence theorem, we derive 
\begin{equation*}\begin{split}
&\lim_{n\to+\infty}\sup_{m\geq 1} \Vert\vert {h}^{(n)}-{h}^{(n+m)} \vert+\vert{ F}^{(n)}-{ F}^{(n+m)}\vert+\sup_{0\leq{u}\leq\cdot}\vert{S}_u^{(n)}-{S}_u^{(n+m)}\vert\Vert_{L^p(P\otimes{V}^{\mathbb F})}\\
&\leq\lim_{n\to+\infty}\Vert{I}_{\Lbrack{T_n},+\infty\Lbrack}(\vert {h}\vert+{ F}+\sup_{0\leq{u}\leq\cdot}\vert{S}_u\vert)\Vert_{L^p(P\otimes{V}^{\mathbb F})}=0.\end{split}
\end{equation*}
A combination of this with (\ref{Estimate4P1Tinifiniteproof}) proves that $(Y^{\mathbb{G},(n)}, Z^{\mathbb{G},(n)}, K^{\mathbb G,(n)},M^{\mathbb{G},(n)})$ is a Cauchy sequence in norm, and hence it converges  to $(Y^{\mathbb{G}}, Z^{\mathbb{G}}, K^{\mathbb G},M^{\mathbb{G}})$ in norm and almost surely for a subsequence. As a result, $(Y^{\mathbb{G}}, Z^{\mathbb{G}}, K^{\mathbb G},M^{\mathbb{G}})$ clearly satisfies the first equation in (\ref{RBSDEGinfinite}) and $Y^{\mathbb{G}}\geq S$ on $\Lbrack0,\tau\Lbrack$, while the last condition in  (\ref{RBSDEGinfinite})  can be proved as in part 2. Hence, due to Fatou's lemma and (\ref{Estimate4PTinfinityproof}), we conclude that assertion (b) holds. This ends part 3.\\
{\bf Part 4.} Here we prove assertion (c) under no assumption. Thus, we consider a pair of data $(f^{(i)}, S^{(i)},h^{(i)}, \xi^{(i)})$, $i=1,2$, to which we associate two sequences of $\mathbb F$-stopping times $(T^{(i)}_n)_n$ for $i=1,2$ as in part 3, and two data-sequences $(f^{(n,i)}, h^{(n,i)}, S^{(n,i)})$ which are constructed from $(f^{(i)}, h^{(i)}, S^{(i)})$ and $T_n:=\min(T^{(1)}_n, T^{(2)}_n)$ via (\ref{Consutrction4DataSequence}). On the one hand, thanks to part 2, we obtain the existence of  $(Y^{\mathbb{G},(n,i)}, Z^{\mathbb{G},(n,i)}, K^{\mathbb G,(n,i)},M^{\mathbb{G},(n,i)})_{n\geq 1}$  ($i=1,2$) solution to  (\ref{RBSDEGinfinite}) for the data $(f^{(n,i)}, h^{(n,i)}, S^{(n,i)})$ with the horizon $T_n\wedge\tau$. Furthermore, the sequence $$(Y^{\mathbb{G},(n,i)}, Z^{\mathbb{G},(n,i)}, K^{\mathbb G,(n,i)},M^{\mathbb{G},(n,i)})_{n\geq1}$$ converges (in norm and almost surely for a subsequence) to  $(Y^{\mathbb{G},i}, Z^{\mathbb{G},i}, K^{\mathbb G,i},M^{\mathbb{G},i})$, which is a solution to  (\ref{RBSDEGinfinite})  for  $(f^{(i)}, S^{(i)},h^{(i)}, \xi^{(i)})$ and the horizon $\tau$. On the other hand, thanks to part 3, we apply assertion (c) to  the quadruplet $(\delta{Y}^{\mathbb{G},(n)}, \delta{Z}^{\mathbb{G},(n)}, \delta{K}^{\mathbb G,(n)},\delta{M}^{\mathbb{G},(n)})$ given by 
\begin{equation*}
\begin{split}
&\delta{Y}^{\mathbb{G},(n)}:=Y^{\mathbb{G},(n,1)}-Y^{\mathbb{G},(n,2)},\quad \delta{Z}^{\mathbb{G},(n)}:= Z^{\mathbb{G},(n,1)}-Z^{\mathbb{G},(n,2)},\\
& \delta{K}^{\mathbb G,(n)}:= K^{\mathbb G,(n,1)}-K^{\mathbb G,(n,2)},\quad \delta{M}^{\mathbb{G},(n)}):=M^{\mathbb{G},(n,1)})-M^{\mathbb{G},(n,2)},\end{split}\end{equation*}
which is associated to  
\begin{equation*}
\begin{split}
(\delta{f}^{(n)}, \delta{h}^{(n)}, \delta{S}^{(n)})&:=(f^{(n,1)}-f^{(n,2)}, h^{(n,1)}-h^{(n,2)}, S^{(n,1)}-S^{(n,2)})\\
&= (\delta{f}, \delta{h}, \delta{S})I_{\Lbrack0,T_n\Lbrack}.\end{split}\end{equation*}
This yields
 \begin{align*}
 &
 \Vert\delta Y^{{\mathbb{G},(n)}}\Vert_{\widetilde{\mathbb{D}}_{\tau}(P,p)}+\Vert \delta Z^{\mathbb{G},(n)}\Vert_{\widetilde{\mathbb{S}}_{\tau}(P,p)}
 +\Vert\sqrt[p]{\widetilde{\cal E}_{-}}\is\delta M^{\mathbb{G},(n)}\Vert_{{\cal{M}}^p(P)}\\
&\leq C_1 \Vert \vert\delta{h}^{(n)}\vert+\vert{\delta{F}^{(n)}}\vert+\sup_{0\leq{u}\leq{\cdot}}\vert\delta{S}^{(n)}_u\vert\Vert_{L^p(P\otimes V^{\mathbb F})}\\
&\quad\quad+C_2\sqrt{\Vert\sup_{0\leq{u}\leq{\cdot}}\vert\delta{S}^{(n)}_u\vert\Vert_{L^p(P\otimes V^{\mathbb F})}} \sqrt{\sum_{i=1}^2\Gamma(i,n)},
\end{align*} 
where 
$$\Gamma(i,n):=\Vert {F}^{(n,i)}+\vert{h}^{(n,i)}\vert+\sup_{0\leq{u}\leq{\cdot}}({S}_u^{(n,i)})^+\Vert_{L^p(P\otimes V^{\mathbb F})},\quad i=1, 2.$$
Then by  taking the limits in both sides of this inequality, and using Fatou on the left-hand-side term and the convergence monotone theorem on the right-hand-side term afterwards, we conclude that assertion (c) follows immediately for this general case.  This ends the fourth part, and the proof of the theorem is complete.
\end{proof}
\begin{appendices}

\section{A martingale inequality }
The following lemma, that plays a crucial role in our estimations, is interesting in itself and  generalizes \cite[Lemma 4.8]{Choulli4} . 
\begin{lemma}\label{Lemma4.8FromChoulliThesis}Let $r^{-1}=a^{-1}+b^{-1},$ where $ a>1$ and $b>1$. Then there exists a positive constant $\kappa=\kappa(a,b)$ depending only on $a$ and $b$  such that for any triplet $(H, X, M)$, where $H$ is predictable, $X$ is RCLL and adapted process, $M$ is a martingale,  and $\vert{H}\vert \leq \vert{X_{-}}\vert$, the following inequality holds. 
\begin{eqnarray*}
\Vert \sup_{0\leq{t}\leq{T}}\vert(H\is{M})_t\vert\Vert_r\leq \kappa\Vert\sup_{0\leq{t}\leq{T}}\vert{X}_t\vert\Vert_a\Vert[M]_T^{\frac{1}{2}}\Vert_b.
\end{eqnarray*}
\end{lemma}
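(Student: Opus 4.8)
The plan is to reduce the inequality to the Burkholder--Davis--Gundy (BDG) estimate followed by a single application of the generalized H\"older inequality. Before anything else I would record that, since $X$ is RCLL and adapted, its left-limit process $X_{-}$ is adapted and left-continuous, hence locally bounded; consequently the domination $\vert H\vert\leq\vert X_{-}\vert$ forces $H$ to be locally bounded, so that $N:=H\is M$ is a well-defined local martingale with $N_0=0$. This is what makes the martingale machinery applicable, and if $E[[M]_T^{b/2}]=+\infty$ or $E[(\sup_{u\leq T}\vert X_u\vert)^a]=+\infty$ the asserted bound is trivial, so I may assume both are finite.

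The first main step is to apply BDG to $N$ at the exponent $r$. Here $a,b>1$ give $r^{-1}=a^{-1}+b^{-1}\in(0,2)$, so $r\in(1/2,\infty)$; invoking the version of BDG valid for \emph{every} strictly positive exponent, there is a constant $C_r$ depending only on $r$ (hence only on $a,b$) with $E[\sup_{0\leq t\leq T}\vert N_t\vert^r]\leq C_r\,E[[N]_T^{r/2}]$, i.e. $\Vert\sup_{0\leq t\leq T}\vert(H\is M)_t\vert\Vert_r\leq C_r^{1/r}\,\Vert [N]_T^{1/2}\Vert_r$.

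The second step is a pathwise domination of the quadratic variation. Using $[H\is M]=H^2\is[M]$ together with $\vert H_s\vert\leq\vert X_{s-}\vert\leq X_T^\ast:=\sup_{0\leq u\leq T}\vert X_u\vert$ on $[0,T]$, I would estimate $[N]_T=\int_0^T H_s^2\,d[M]_s\leq (X_T^\ast)^2[M]_T$, whence $[N]_T^{1/2}\leq X_T^\ast\,[M]_T^{1/2}$ pathwise. The final step applies the generalized H\"older inequality with the conjugate exponents $a/r$ and $b/r$, which satisfy $r/a+r/b=r(a^{-1}+b^{-1})=1$, to the product $X_T^\ast\cdot[M]_T^{1/2}$, giving $\Vert X_T^\ast\,[M]_T^{1/2}\Vert_r\leq \Vert X_T^\ast\Vert_a\,\Vert [M]_T^{1/2}\Vert_b$. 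Chaining the three inequalities yields the claim with $\kappa:=C_r^{1/r}$, which depends only on $a$ and $b$.

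I do not expect a serious obstacle here; the only point requiring care is that $r$ may be at most $1$ (for instance $a=b=2$ gives $r=1$, and $a,b$ close to $1$ push $r$ toward $1/2$), so one must use the form of BDG that holds for all $p>0$ rather than the $L^p$-form restricted to $p\geq1$, and one must verify that the resulting constant is a function of $a,b$ alone. Everything else is a routine concatenation of the pathwise bound and H\"older's inequality.
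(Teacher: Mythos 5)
Your reduction breaks at the very first step, and unfortunately it breaks exactly in the regime for which the paper needs the lemma. You invoke ``the version of BDG valid for every strictly positive exponent'', but that extension of the Burkholder--Davis--Gundy inequality to exponents in $(0,1)$ is a theorem about \emph{continuous} local martingales only; for martingales with jumps the upper inequality $E[\sup_{t\leq T}\vert N_t\vert^{r}]\leq C_r\,E\bigl[[N]_T^{r/2}\bigr]$ is false for every $r\in(0,1)$. Here $M$ is an arbitrary martingale, and in this paper the lemma is applied to discontinuous $\mathbb G$-martingales (e.g.\ $M^{\mathbb G}$, which jumps at $\tau$), with $a=b=p$, hence $r=p/2<1$ whenever $p\in(1,2)$ (see the proofs of Theorems \ref{EstimatesUnderQtilde} and \ref{EstimatesUnderQtilde1}); since $a,b>1$ only force $r>1/2$, the sub-unit regime is squarely inside the statement. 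A concrete counterexample to BDG at $p\in(0,1)$ with jumps: take $n$ independent centered increments, each equal to $n^{2/p}$ with probability $n^{-2}$ and approximately $-n^{2/p-2}$ otherwise. On the no-big-jump event (probability $(1-n^{-2})^n\to 1$) the path drifts monotonically, so $E[\sup\vert M\vert^p]\gtrsim (n\cdot n^{2/p-2})^p=n^{2-p}$, while $E[[M]_n^{p/2}]\approx n^{2-3p/2}+n^{2}\cdot n^{-1}=n^{2-3p/2}+n$, and for $p<1$ both exponents are strictly smaller than $2-p$, so the ratio blows up. This also explains why the lemma cannot be ``BDG plus H\"older'': it is a Fefferman-type inequality whose whole content is the trade of integrability between the two factors (note $b>r$ on the bracket), valid below the BDG threshold.

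For comparison, the paper's proof uses no BDG at all: it reduces the general $H$ to the known case $H=X_{-}$, which is exactly \cite[Lemma 4.8]{Choulli4} (a Fefferman/Davis-type estimate), by assuming without loss of generality $\vert X_{-}\vert>0$, writing $H\is M=X_{-}\is\overline{M}$ with $\overline{M}:=(H/X_{-})\is M$, observing that $H/X_{-}$ is predictable and bounded by $1$ so that $[\overline{M},\overline{M}]\leq[M,M]$, and then applying the cited lemma to the pair $(X,\overline{M})$. Your chain is correct, and indeed simpler than citing a Fefferman-type result, in the complementary regime $a^{-1}+b^{-1}\leq 1$: BDG at $r>1$, Davis at $r=1$, the pathwise bound $[N]_T^{1/2}\leq\sup_{t\leq T}\vert X_t\vert\,[M]_T^{1/2}$, and H\"older with $r/a+r/b=1$ are all valid there, and your constant depends only on $a,b$. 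But for $a^{-1}+b^{-1}>1$ your argument does not prove the lemma and cannot be repaired within the BDG framework; one genuinely needs the Fefferman-type mechanism (or the reduction to the Choulli--Krawczyk--Stricker lemma, as the paper does).
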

\begin{proof} When $H=X_{-}$, the assertion can be found in \cite[Lemma 4.8]{Choulli4}. To prove the general case, we remark that, there is no loss of generality in assuming $\vert X_{-}\vert>0$, and hence $H/X_{-}$ is a well defined process, which is predictable and bounded. Thus, put $\overline{M}:=(H/X_{-})\is{ M},$
and remark that $[\overline{M},\overline{M}]=(H/X_{-})^2\is[M, M]\leq [M, M]$. As a result, we get 
\begin{align*}
\Vert\sup_{0\leq{t}\leq{T}}\vert({H}\is{ M})_t\vert\Vert_r&=\Vert\sup_{0\leq{t}\leq{T}}\vert({X_{-}}\is\overline{M})_t\vert\Vert_r\leq \kappa \Vert\sup_{0\leq{t}\leq{T}}\vert{X}_t\vert\Vert_a\Vert[\overline{M},\overline{M}]_T^{\frac{1}{2}}\Vert_b\\
&\leq \kappa\Vert\sup_{0\leq{t}\leq{T}}\vert{X}_t\vert\Vert_a\Vert[M, M]_T^{\frac{1}{2}}\Vert_b.\end{align*}
This ends the proof of the lemma.
\end{proof}
\section{Proof of Lemmas  \ref{Lemma4.11}, \ref{ExpecationQtilde2Pbis}, \ref{technicallemma1}, and  \ref{ExpecationQtilde2P}}\label{Appendix4Proofs}
\begin{proof}[Proof of Lemma \ref{Lemma4.11}] 
Thanks to Lemma \ref{G-projection},  on $(\tau>s)$ we derive 
 \begin{equation*}
 \begin{split}
    & E^{\widetilde{Q}}\left[D^{o,\mathbb{F}}_{T\wedge\tau} -D^{o,\mathbb{F}}_{s-}\big|{\cal G}_{s}\right]=\Delta D^{o,\mathbb{F}}_{s}+ E\left[\int_{s\wedge\tau}^{T\wedge\tau} {\widetilde{Z}}_u{d}D^{o,\mathbb{F}}_u \big|{\cal G}_{s}\right]{\widetilde{Z}}_{s\wedge\tau}^{-1}\\
     &=E\left[\int_{s\wedge\tau}^{T\wedge\tau} \widetilde{Z}_u{d} D^{o,\mathbb{F}}_u \big|{\cal F}_{s}\right](\widetilde{Z}_{s\wedge\tau}G_s)^{-1}+\Delta D^{o,\mathbb{F}}_{s}\\
     &=E\left[\int_{s}^{T}{{ {\cal E}_{u-}(-{\widetilde G}^{-1}_{-}\is{ D}^{o,\mathbb F})}\over{{\cal E}_{s}(-{\widetilde G}^{-1}_{-}\is{ D}^{o,\mathbb F})}} d D^{o,\mathbb{F}}_u \big|{\cal F}_{s}\right]+\Delta D^{o,\mathbb{F}}_{s}\leq {\widetilde{G}}_{s}.\end{split}
     \end{equation*}
     This proves assertion (a). The rest of this proof addresses assertion (b). By combining (\ref{Vepsilon}) and $1-(1-x)^a\leq\max(a,1) x$ for any $0\leq{x}\leq 1$, we get 
     \begin{eqnarray*}
    \Delta\widetilde{V}^{(a)}= 1-\left(1-{{\Delta D^{o,\mathbb F}}\over{\widetilde G}}\right)^a\leq \max(1,a){{\Delta D^{o,\mathbb F}}\over{\widetilde G}}.
     \end{eqnarray*}
     Hence, by putting $ W:= \max(1,a){\widetilde G}^{-1}\is{ D}^{o,\mathbb F}- \widetilde{V}^{(a)},$
     we deduce that both 
     \begin{eqnarray*}
I_{\{\Delta D^{o,\mathbb F}\not=0\}} \is{ W}=\sum\left\{ \max(1,a){{\Delta D^{o,\mathbb F}}\over{\widetilde G}}- 1+\left(1-{{\Delta D^{o,\mathbb F}}\over{\widetilde G}}\right)^a\right\}
 \end{eqnarray*}
 and $ I_{\{\Delta D^{o,\mathbb F}=0\}} \is{ W}= (1-a)^+{\widetilde G}^{-1}I_{\{\Delta D^{o,\mathbb F}=0\}}\is{D}^{o,\mathbb F}$ are nondecreasing processes. Hence assertion (b)  follows immediately from this latter fact and $W= I_{\{\Delta D^{o,\mathbb F}=0\}} \is{ W}+ I_{\{\Delta D^{o,\mathbb F}\not=0\}} \is{ W}$. This ends the proof of the lemma.  \end{proof}
 \begin{proof}[Proof of Lemma \ref{ExpecationQtilde2Pbis}] Remark that, for any process $H$, we have 
     $$H_{T\wedge\tau}=H_{\tau}I_{\{0<\tau\leq T\}}+H_T I_{\{\tau>T\}}+H_0I_{\{\tau=0\}}.$$
     Thus, by applying this to $X/{\cal E}(G_{-}^{-1}\is{ m})=X/\widetilde{Z}$, we derive
     \begin{equation*}\begin{split}
     E^{\widetilde Q}[X_{T\wedge\tau}]&=E\left[{{X_{T\wedge\tau}}\over{\widetilde{Z}_{T\wedge\tau}}}\right]=E\left[{{X_{\tau}}\over{\widetilde{Z}_{\tau}}}I_{\{0<\tau\leq T\}}+{{X_T}\over{\widetilde{Z}_T}}I_{\{\tau> T\}} +X_0I_{\{\tau=0\}}\right]\\
     &=E\left[\int_0^T {X_s}\widetilde{Z}_s^{-1}dD_s^{o,\mathbb F}+X_T{\widetilde{Z}}_T^{-1}G_T+X_0(1-G_0)\right]\\
      &= E\left[G_0\int_0^T X_sdV_s^{\mathbb F}+G_0X_T{\widetilde {\cal E}}_T+X_0(1-G_0)\right].\end{split}
     \end{equation*}
     This proves assertion (a). To prove assertion (b), we recall that $dV_s^{\mathbb F}=-d{\widetilde {\cal E}}_s$ and we apply It\^o to ${\widetilde {\cal E}}X$ to get 
     $$\int_0^T X_sdV_s^{\mathbb F}+X_T{\widetilde {\cal E}}_T-X_0=\int_0^T {\widetilde {\cal E}}_{s-} dX_s.$$
      Thus, by  inserting this in (\ref{XunderQtilde}) assertion (b) follows  immediately.\\ 
      To prove assertion (c), we consider a nondecreasing RCLL and $\mathbb{F}$-adapted process $X$ such that $X_0=0$. Then using $dV^{\mathbb{F}}=-d\widetilde{\cal{E}}$, and $(\vert{h}\vert\is{V}^{\mathbb{F}})^r\leq \vert{h}\vert^r\is{V}^{\mathbb{F}}$ due to Jensen's inequality and $V^{\mathbb{F}}\leq 1$, we derive 
      \begin{equation*}
      \begin{split}
      \left(\widetilde{\cal{E}}_{-}\is X\right)^r&=   \left(\widetilde{\cal{E}}X- X\is\widetilde{\cal{E}}\right)^r\leq 2^r \left(\widetilde{\cal{E}}^rX^r+(X\is{V}^{\mathbb{F}})^r\right)\leq 2^r \left(\widetilde{\cal{E}}X^r+X^r\is{V}^{\mathbb{F}}\right)=2^r  \left(\widetilde{\cal{E}}_{-}\is{X}^r\right).
      \end{split}
      \end{equation*}
      Thus, by combining this inequality and assertion (b) (i.e. the equality (\ref{XunderQtilde2})) applied to the nondecreasing process with null initial value $\widetilde{\cal{E}}_{-}\is{X}^r$, the inequality (\ref{XunderQtilde3}) follows immediately. This proves assertion (c), and the proof of the lemma is complete. \end{proof}
     \begin{proof}[Proof of Lemma \ref{technicallemma1}]This proof has four parts where we prove the four assertions respectively. \\
    {\bf Part 1.} Let $a\in (0,+\infty)$ and $Y$ be a RCLL $\mathbb G$-semimartingale, and put $Y^*_t:=\sup_{0\leq s\leq t}\vert Y_s\vert$. Then, on the one hand,  we remark that 
\begin{eqnarray}\label{remark1}
\sup_{0\leq t\leq{T}\wedge\tau} {\widetilde{\cal E}}_t\vert Y_t\vert^a\leq \sup_{0\leq t\leq{T}\wedge\tau} {\widetilde{\cal E}}_t (Y_t^*)^a.
\end{eqnarray}
On the other hand, thanks to It\^o, we derive 
\begin{eqnarray}\label{remark2}
{\widetilde{\cal E}}(Y^*)^a=(Y_0^*)^a+{\widetilde{\cal E}}\is(Y^*)^a+(Y^*_{-})^a\is {\widetilde{\cal E}}\leq (Y_0^*)^a+{\widetilde{\cal E}}\is (Y^*)^a.
\end{eqnarray}
By combining (\ref{remark1}) and (\ref{remark2}) with ${\widetilde{\cal E}}=G/\left(G_0{\cal E}(G_{-}^{-1}\is{ m})\right)$, we get 
\begin{equation*}
\begin{split}
E\left[\sup_{0\leq t\leq{T}\wedge\tau} {\widetilde{\cal E}}_t\vert Y_t\vert^a\right]&\leq{E}\left[(Y_0^*)^a+\int_0^{T\wedge\tau} {\widetilde{\cal E}_s} d(Y^*_s)^a\right]\nonumber \\
&=E^{\widetilde{Q}}\left[(Y_0^*)^a+\int_0^{T\wedge\tau} {{G_s}\over{G_0}}{d}(Y^*_s)^a\right]\leq {1\over{G_0}}E^{\widetilde{Q}}\left[(Y^*_{T\wedge\tau})^a\right].\end{split}
\end{equation*}
This proves assertion (a). \\
{\bf Part 2.} Let $a\in (0,+\infty)$ and  $K$ be a RCLL nondecreasing and $\mathbb G$-optional process with $K_0=0$. Then, we remark that 
\begin{equation}\label{equa300}
\widetilde{\cal E}_{-}^a \is{ K}=K\widetilde{\cal E}^a-K\is\widetilde{\cal E}^a=K\widetilde{\cal E}^a+K{\widetilde{\cal E}_{-}^a}\is\widetilde{V}^{(a)}=K\widetilde{\cal E}^a+K_{-}{\widetilde{\cal E}_{-}^a}\is\widetilde{V}^{(a)}+\Delta{K}{\widetilde{\cal E}_{-}^a}\is\widetilde{V}^{(a)},\end{equation}
where $\widetilde {V}^{(a)}$ is defined in (\ref{Vepsilon}). As a result, by combining the above equality,  the fact that $(\sum_{i=1}^n x_i)^{1/a}\leq n^{1/a}\sum _{i=1}^n x_i^{1/a}$ for any sequence of nonnegative numbers and Lemma \ref{Lemma4.11}, we derive  
\begin{equation*}\begin{split}
E\left[(\widetilde{\cal E}_{-}^a \is{ K}_{T\wedge\tau})^{1/a}\right]&\leq 3^{1/a}E\left[(K_{T\wedge\tau})^{1/a}\widetilde{\cal E}_{T\wedge\tau}+(K_{-}{\widetilde{\cal E}_{-}^a}\is\widetilde{V}^{(a)}_{T\wedge\tau})^{1/a}+({\widetilde{\cal E}_{-}^a}\Delta{K}\is\widetilde{V}^{(a)}_{T\wedge\tau})^{1/a}\right]\\
&\leq \sqrt[a]{3}E^{\widetilde{Q}}\left[\sqrt[a]{K_{T\wedge\tau}}{{G_{T\wedge\tau}}\over{G_0}}\right]+\sqrt[a]{3}E\left[4\sup_{0\leq{t}\leq{T\wedge\tau}}K_{t}^{1/a}{\widetilde{\cal E}_t}+\sqrt[a]{{\widetilde{\cal E}_{-}^a}\Delta{K}\is\widetilde{V}^{(a)}_{T\wedge\tau}}\right].\end{split}\end{equation*}
Then, due to  $K^{1/a}\widetilde{\cal E}\leq \widetilde{\cal E}\is{K}^{1/a}$ and ${\widetilde{\cal E}}=G/\left(G_0{\cal E}(G_{-}^{-1}\is{ m})\right)$, the above inequality leads to 
\begin{equation}\label{equa299}
\begin{split}
E\left[(\widetilde{\cal E}_{-}^a \is{ K}_{T\wedge\tau})^{1/a}\right]&\leq  {{\sqrt[a]{3}}\over{G_0}}E^{\widetilde{Q}}\left[\sqrt[a]{K_{T\wedge\tau}}\right]+4\sqrt[a]{3}E\left[\int_0^{T\wedge\tau}{\widetilde{\cal E}_t} d\sqrt[a]{K_{t}}\right]+\sqrt[a]{3}E\left[\sqrt[a]{{\widetilde{\cal E}_{-}^a}\Delta{K}\is\widetilde{V}^{(a)}_{T\wedge\tau}}\right]\\
&\leq  5{{3^{1/a}}\over{G_0}}E^{\widetilde{Q}}\left[(K_{T\wedge\tau})^{1/a}\right]+3^{1/a}E\left[({\widetilde{\cal E}_{-}^a}\Delta{K}\is\widetilde{V}^{(a)}_{T\wedge\tau})^{1/a}\right].\end{split}\end{equation}
Thus, it remains to deal with the last term in the right-hand-side term of the above inequality. To this end, we distinguish whether $a\geq 1$ or $a<1$. \\
The case when $a\geq 1$, or equivalently $1/a\leq 1$. Then we use the fact that $(\sum x_i)^{1/a}\leq \sum x_i^{1/a}$ for any sequence of nonnegative numbers, and get 
\begin{equation*}
\begin{split}
&E\left[(\Delta{K}{\widetilde{\cal E}_{-}^a}\is\widetilde{V}^{(a)}_{T\wedge\tau})^{1/a}\right]\\
&= E\left[\left(\sum_{0\leq t\leq _{T\wedge\tau}} \Delta{K}_t\widetilde{\cal E}_{t-}^a\Delta\widetilde{V}^{(a)}_t\right)^{1/a}\right]\leq E\left[\sum_{0\leq t\leq _{T\wedge\tau}} (\Delta{K}_t)^{1/a}\widetilde{\cal E}_{t-}(\Delta\widetilde{V}^{(a)}_t)^{1/a}\right]\\
&\leq a^{1/a} E\left[\sum_{0\leq t\leq _{T\wedge\tau}} (\Delta{K}_t)^{1/a}\widetilde{\cal E}_{t-}\right]= a^{1/a} E\left[\sum_{0\leq t\leq _{T\wedge\tau}} (\Delta{K}_t)^{1/a}{{\widetilde{G}_t}\over{G_t}}\widetilde{\cal E}_{t}\right] \\
&={{a^{1/a} }\over{G_0}}{E}^{\widetilde{Q}}\left[\sum_{0\leq t\leq _{T\wedge\tau}}\widetilde{G}_t(\Delta{K}_t)^{1/a}\right] .\end{split}
\end{equation*}
The last equality follows from $\widetilde{\cal E}/G=G_0^{-1}/{\cal E}(G_{-}^{-1}\is{ m})$. Thus, by combining this latter inequality with (\ref{equa299}), assertion (b) follows immediately for this case of $a\geq 1$. For the case of $a\in (0,1)$, or equivalently $1/a>1$, we use Lemma \ref{Lemma4.11} and derive 
\begin{align*}
&E\left[(\Delta{K}{\widetilde{\cal E}_{-}^a}\is\widetilde{V}^{(a)})_{T\wedge\tau}-(\Delta{K}{\widetilde{\cal E}_{-}^a}\is\widetilde{V}^{(a)})_{{t\wedge\tau}-}\bigg|\ \mathcal{G}_{t}\right]\\
&=E\left[\int_{t\wedge\tau}^{T\wedge\tau}\Delta{K_s}{\widetilde{\cal E}_{s-}^a}d\widetilde{V}^{(a)}_s + (\Delta{K_{t\wedge\tau}}{\widetilde{\cal E}_{{t\wedge\tau}-}^a}\Delta \widetilde{V}^{(a)})_{t\wedge\tau}\bigg|\ \mathcal{G}_{t}\right]\\
&\leq E\left[\int_{t\wedge\tau}^{T\wedge\tau}\sup_{0\leq u\leq s}\Delta{K_u}{\widetilde{\cal E}_{u-}^a}d\widetilde{V}^{(a)}_s + \sup_{0\leq u\leq{t\wedge\tau}} \Delta{K_u}{\widetilde{\cal E}_{u-}^a}\bigg|\ \mathcal{G}_{t}\right]\\
&=E\left[\int_{t\wedge\tau}^{T\wedge\tau}E[\widetilde{V}^{(a)}_{T\wedge\tau}-\widetilde{V}^{(a)}_{s-}\big|{\cal G}_{s}]d\sup_{0\leq u\leq s}\Delta{K_u}{\widetilde{\cal E}_{u-}^a} + \sup_{0\leq u\leq{t\wedge\tau}} \Delta{K_u}{\widetilde{\cal E}_{u-}^a}\bigg|\ \mathcal{G}_{t}\right]\\
&\leq E\left[ \sup_{0\leq u\leq{T\wedge\tau}} \Delta{K_u}{\widetilde{\cal E}_{u-}^a}\bigg|\ \mathcal{G}_{t}\right].\end{align*}
Therefore, a direct application of  \cite[Th\'eor\`eme 99, Chapter VI]{DellacherieMeyer80}, we obtain
\begin{align*}
E\left[\sqrt[a]{a\Delta{K}{\widetilde{\cal E}_{-}^a}\is\widetilde{V}^{(a)}_{T\wedge\tau}}\right]&\leq{E}\left[ \sup_{0\leq u\leq{T\wedge\tau}} {\widetilde{\cal E}_{u-}}\sqrt[a]{\Delta{K_u}}\right]\leq {E}\left[ \sum_{0\leq u\leq{T\wedge\tau}}  {\widetilde{\cal E}_{u-}}\sqrt[a]{\Delta{K_u}}\right]\\
&=G_0^{-1} E^{\widetilde{Q}}\left[ \sum_{0\leq u\leq{T\wedge\tau}} \widetilde{G}_u\sqrt[a]{\Delta{K_u}}\right].
\end{align*}
Hence, by combining this inequality with (\ref{equa299}), assertion (b) follows immediately in this case of $a\in (0,1)$, and the proof of assertion (b) is complete.\\
{\bf Part 3.}  Here we prove assertion (c). To this end, we consider $p>1$,  a $\mathbb G$-optional process $H$, and we apply assertion (b) to $K=H\is[N^{\mathbb G}, N^{\mathbb G}]$ with $a=2/p$, and get 
\begin{equation*}
   \begin{split}
    E\left[({\widetilde{\cal E}}_{-}^{2/p}H\is[N^{\mathbb G},N^{\mathbb G}])_{T\wedge\tau} ^{{{2}\over{p}}}\right]\leq {{\kappa(a)}\over{G_0}}  E^{\widetilde{Q}}\left[(H\is[N^{\mathbb G},N^{\mathbb G}]_{T\wedge\tau})^{{{2}\over{p}}}+ \sum_{0\leq t\leq {T\wedge\tau}}{\widetilde{G}_t}H^{{{2}\over{p}}}_t\vert\Delta{N}^{\mathbb G}\vert^p\right].
    \end{split} \end{equation*}
     Therefore, assertion (c) follows from combining this with $\vert\Delta{N}^{\mathbb G}\vert^{p-1}\leq 1$ and  $$\sum_{0\leq{t}\leq\cdot}{\widetilde{G}_t}H^{p/2}_t\vert\Delta{N}^{\mathbb G}_t\vert= {\widetilde{G}}H^{p/2}\is\rm{Var}(N^{\mathbb G}).$$
 {\bf Part 4.}  Consider $p>1$ and a nonnegative and $\mathbb H$-optional process $H$. Thus, by applying assertion (c), we obtain (\ref{Equality4MG}). Hence, to get (\ref{Equality4MGOptionalF}), we remark that $\rm{Var}(N^{\mathbb G})=(G/\widetilde{G})\is{D}+ {\widetilde{G}}^{-1}I_{\Rbrack0,\tau\Lbrack}\is{ D}^{o,\mathbb F}$, and due to the $\mathbb F$-optionality of $H$ we have 
    \begin{equation*} 
 E^{\widetilde{Q}}\left[{\widetilde{G}}\sqrt{H^p}\is\rm{Var}(N^{\mathbb G})_T\right]=2E\left[\int_0^T {{\sqrt{H^p_t}}\over{{\cal E}_t(G_{-}^{-1}\is{m})}}I_{\Rbrack0,\tau\Lbrack}(t)d D^{o,\mathbb F}_t\right]=2E^{\widetilde{Q}}\left[(\sqrt{H^p}I_{\Rbrack0,\tau\Lbrack}\is{D}^{o,\mathbb F})_T\right].
      \end{equation*}
    Therefore, by combining this with (\ref{Equality4MG}), assertion (d) follows immediately. This ends the proof of the lemma.\end{proof} 
 \begin{proof}[Proof of Lemma \ref{ExpecationQtilde2P}] Thanks to Lemma \ref{ExpecationQtilde2Pbis}-(a) and $X_0=0$, we have 
     $$H_{T\wedge\tau}=H_{\tau}I_{\{0<\tau\leq T\}}+H_T I_{\{\tau>T\}}+H_0I_{\{\tau=0\}}.$$
     Thus, by applying this to the process $X/{\cal E}(G_{-}^{-1}\is{ m})$, we derive
     \begin{equation*}\begin{split}
     E^{\widetilde Q}[X_{T\wedge\tau}]= E\left[G_0\int_0^T X_sdV_s^{\mathbb F}+G_0X_T{\widetilde {\cal E}}_T\right].\end{split}
     \end{equation*}
     Then by letting $T$ to go to infinity, the term $\int_0^T X_sdV_s^{\mathbb F}$ increases to $\int_0^{\infty} X_sdV_s^{\mathbb F}$, while the term $G_0X_T{\widetilde {\cal E}}_T=G_TX_T/{\cal{E}}_T(G_{-}^{-1}\is m)$ is bounded by $C$ and goes to zero in virtue of $G_{\infty-}=\lim_{t\longrightarrow+\infty}G_t=0$ $P$-a.s. ($\tau<+\infty$ $P$-a.s.). Thus, the lemma follows immediately from these two remarks and the dominated convergence and the convergence monotone theorems. \end{proof}
    \end{appendices}
\section*{Acknowledgements}
\noindent Tahir Choulli's research is fully supported by the Natural Sciences and Engineering Research Council of Canada (NSERC), Grant NSERC RGPIN-2019-04779. \\
Both authors contributed equally to this research.   




\end{document}